\documentclass[11pt]{article}
\usepackage{amssymb}
\usepackage{graphicx}
\usepackage{setspace}
  \usepackage{paralist}
  \usepackage{longtable}
   \usepackage{multirow}
    \usepackage{rotating}
\usepackage{fancyhdr}
\usepackage[pagewise]{lineno}

\usepackage{amsmath, amsthm, amssymb}
\usepackage{authblk}
\usepackage{graphicx}
\usepackage{float}
\usepackage{hyperref}
\usepackage[margin=1in]{geometry}
\usepackage{comment}
\usepackage{ulem,color}
\allowdisplaybreaks[4]
\numberwithin{equation}{section}
\date{}

\newtheorem{theorem}{Theorem}[section]
\newtheorem{proposition}{Proposition}[section]
\newtheorem{lemma}{Lemma}[section]
\newtheorem{corollary}{Corollary}[section]

\newtheorem{remark}{Remark}[section]

\newcommand{\shortbar}{\mathbin{\text{-}}}

\newcommand{\be}{\begin{equation}}
\newcommand\ee{\end{equation}}
\newcommand\bes{\begin{eqnarray}}
\newcommand\ees{\end{eqnarray}}
\newcommand\bess{\begin{eqnarray*}}
\newcommand\eess{\end{eqnarray*}}

\newcommand\nm{\nonumber}

\title{Wave propagation of a generic non--conservative compressible two--fluid model}

\author{
Zhigang Wu$^{1}$,\ \ Weike Wang$^{2}$,\ \ 
Yinghui Zhang$^{1}$\thanks{Corresponding author (Y.H. Zhang): yinghuizhang@gxnu.edu.cn}}

\begin{document}

\maketitle
\renewcommand{\thefootnote}{\fnsymbol{footnote}}

\footnotetext{1. School of Mathematics and Statistics, Guangxi Normal University, Guilin,
 Guangxi 541004, P.R. China}
\footnotetext{2. School of Mathematical Sciences, CMA-Shanghai and Institute of Natural Science, Shanghai Jiao Tong University, Shanghai 200240, P.R.China}

\pagestyle{myheadings} \markboth{}{Z. WU \& W. Wang \& Y. Zhang:\ \ \  Two-phase Model } \maketitle


\noindent {{\bf  Abstract:}} The generalized Huygens principle for the Cauchy problem of a generic non-conservative compressible two-fluid model in $\mathbb{R}^3$ was established. This work fills a key gap in the theory, as previous results were confined to systems with full conservation laws or ``equivalent" conservative structures from specific compensatory cancellations in Green's function. Indeed, the genuinely non-conservative model studied here falls outside these categories and presents two major analytical challenges. First, its inherent non-conservative structure blocks the direct use of techniques (e.g., variable reformulation) effective for conservative systems.  Second, its Green's function contains a -1-order Riesz operator associated with the fraction densities $\alpha^\pm\rho^\pm$, which generates a so-called Riesz wave-I\!V exhibiting both slower temporal decay and poorer spatial integrability compared to the standard heat kernel, necessitating novel sharp convolution estimates with the Huygens wave. To overcome these difficulties, we develop a framework for precise nonlinear coupling, including interaction of Riesz wave-I\!V and Huygens wave. A pivotal step is extracting enhanced decay rates for the non-conservative pressure terms $\alpha^\pm\nabla P(\rho^\pm)$. By reformulating these terms into a product involving the fraction densities and the specific combination $\bar\rho^-\alpha^+\rho^++\bar\rho^+\alpha^-\rho^-$, and then proving this combination decays faster than the individual densities, we meet the minimal requirements for the crucial convolution estimates. This allows us to close the nonlinear ansatz by constructing essentially new nonlinear estimates.
The success of our analysis stems from the model's special structure, particularly the equal-pressure condition. More broadly, the sharp nonlinear estimates developed herein is applicable to a wide range of non-conservative compressible fluid models.

\textbf{{\bf Key Words}:}
 Green's function; two-fluid model; non-conservative; wave propagation.

\textbf{{\bf MSC2010}:} 35A09; 35B40; 35J08; 35Q35.

\section{\leftline {\bf{Introduction.}}}
\setcounter{equation}{0}
\subsection{Background and motivation}
\indent Multi-fluid flows, involving either immiscible phases such as air-water or miscible components forming complex mixtures like emulsions, are ubiquitous in nature and industry. A classic example is interfacial wave motion between separated fluids, traditionally described by a pair of Euler equations coupled through a moving free surface:
\begin{equation}\label{1.1}
\left\{\begin{array}{l}
\partial_{t} \rho_{i}+\operatorname{div}\left(\rho_{i} \mathbf{v}_{i}\right) =0, \quad i=1,2,\\
\partial_{t}\left(\rho_{i} \mathbf{v}_{i}\right)+\operatorname{div}\left(\rho_{i} \mathbf{v}_{i} \otimes \mathbf{v}_{i}\right)+\nabla p_i =-g\rho_{i}
e_3\pm F_D.
\end{array}\right.
\end{equation}
In this formulation, $(\rho_1, \mathbf{v}_1)$ and $(\rho_2, \mathbf{v}_2)$ denote the density and velocity fields of the upper (e.g., air) and lower (e.g., water) fluid, respectively, $p_i$ is the pressure, $-g\rho_i e_3$ represents the gravitational force ($g>0$ is the gravitational acceleration and $e_3$ the vertical unit vector), and $F_D$ accounts for drag. The fluids are separated by a moving interface $z=\eta(x,y,t)$, whose evolution is determined by the kinematic condition
\begin{equation}\partial_t\eta=u_{1,z}-u_{1,x}\partial_x \eta-u_{1, y}\partial_y \eta\label{1.3}\end{equation}
with pressure continuity across the interface.

 However, under conditions such as wave breaking, the interface becomes highly fragmented, leading to a dispersed regime where a two-fluid modeling approach becomes essential. Applying a volume-averaging procedure (see
\cite{Ishii1, Prosperetti} for details) to the microscopic equations yields a macroscopic, interface-free description—the generic compressible two-fluid model. With the inclusion of viscous and capillary effects, this model can be formulated as
\begin{equation}\label{1.4}
\left\{\begin{array}{l}
\partial_{t}\left(\alpha^{\pm} \rho^{\pm}\right)+\operatorname{div}\left(\alpha^{\pm} \rho^{\pm} \mathbf{u}^{\pm}\right)=0, \\
\partial_{t}\left(\alpha^{\pm} \rho^{\pm} \mathbf{u}^{\pm}\right)+\operatorname{div}\left(\alpha^{\pm} \rho^{\pm} \mathbf{u}^{\pm} \otimes \mathbf{u}^{\pm}\right)
+\alpha^{\pm} \nabla P=\operatorname{div}\left(\alpha^{\pm} \tau^{\pm}\right)+\sigma^\pm\alpha^{\pm}
\rho^{\pm}\nabla\Delta(\alpha^{\pm} \rho^{\pm}), \\
P=P^{\pm}\left(\rho^{\pm}\right)=A^{\pm}\left(\rho^{\pm}\right)^{\bar{\gamma}^{\pm}},
\end{array}\right.
\end{equation}
where $\alpha^\pm$, $\rho^\pm$, and $\mathbf{u}^\pm$ denote the volume fractions, densities, and velocities of each phase, sharing a common pressure $P$. Here, $\alpha^++\alpha^-=1$,  the viscous stress tensors are given by 
\begin{equation}\label{1.5}\tau^{\pm} = \mu^{\pm}(\nabla \mathbf{u}^{\pm} + \nabla^{t} \mathbf{u}^{\pm}) + \lambda^{\pm} (\operatorname{div} \mathbf{u}^{\pm}) \mathrm{I},\end{equation}
with viscosity coefficients satisfying $\mu^{\pm} > 0$ and $2\mu^{\pm} + 3\lambda^{\pm} \geq 0$, and $\sigma^\pm>0$ denote the capillary coefficients. 
 For more information about
this model, we refer to \cite{Bear, Brennen1,
Evje3, Evje4, Evje8, Friis1, Raja, Shou, Vasseur, Wen1, Yao2,
Zhang4} and references therein. However, it is well--known that as
far as mathematical analysis of two--fluid model is concerned, there
are many technical challenges. Some of them involve, for example:
\begin{itemize}
\item The corresponding linear system of the model has
multiple eigenvalue, which makes linear analysis of the model become quite difficult
and complicated;

\item Transition to single-phase regions, i.e, regions where the mass
$\alpha^{+} \rho^{+}$ or $\alpha^{-} \rho^{-}$ becomes zero, may
occur when the volume fractions $\alpha^{\pm}$ or the densities
$\rho^{\pm}$ become zero;

\item The system is non-conservative, since the non--conservative terms $\alpha^{\pm} \nabla
P^{\pm}$ are involved in the momentum equations. This brings various
 mathematical difficulties for us to employ methods used
for single phase models to the two--fluid model.

\end{itemize}
\par
Bresch et al. in the seminal work \cite{Bresch1} considered the
generic two-fluid model \eqref{1.4} with the following special
density-dependent viscosities:
\begin{equation}\label{1.7}
\mu^\pm(\rho^\pm)=\mu^\pm \rho^\pm,~~~~~\lambda^\pm(\rho^\pm)=0.
\end{equation}
They obtained the global weak solutions in periodic domain with
$1<\overline{\gamma}^{\pm}< 6$. However, as indicated by themselves,
 their methods rely heavily on the above special
density-dependent viscosities, and particularly cannot handle the
case of constant viscosity coefficients as in \eqref{1.5}. Later,
Bresch--Huang--Li \cite{Bresch2} established the global existence of
weak solutions in one space dimension without capillary effects
(i.e., $\sigma^\pm=0$) when $\overline{\gamma}^{\pm}>1$ by taking
advantage of the one space dimension. Recently, Cui--Wang--Yao--Zhu
\cite{c1} obtained the time--decay rates of classical solutions for
model \eqref{1.4} with the following special density-dependent
viscosities with equal viscosity coefficients, and equal capillary
coefficients:
\begin{equation}\label{1.8}
\mu^\pm(\rho^\pm)=\nu\rho^\pm,~~~~~\lambda^\pm(\rho^\pm)=0,~~~\sigma^+=\sigma^-=\sigma.
\end{equation}
Based on the above special choice for viscosities and capillary
coefficients, they can take a linear combination of model
\eqref{1.4} to reformulate it
 into two $4\times 4$ systems whose linear parts are decoupled with each other and possess
the same dissipation structure as that of the compressible
Navier--Stokes-Korteweg system, and then employ the similar
arguments as in \cite{Bian, Wang-Tan} to prove their main results.
However, since this reformulation played a crucial role in their
analysis, the case of constant viscosities, even if the equal
constant viscosities (i.e.,
$\mu^\pm(\rho^\pm)=\nu,~~\lambda^\pm(\rho^\pm)=\lambda$), cannot be
handled in their settings. 
Very recently, The third author of this article and the coauthors in \cite{Liy} investigated the optimal $L^2$-decay rate for all the derivatives of the solution for this model with general constant viscosities and capillary coefficients based on the 
spectral analysis and energy estimates. More precisely, they verified the following: Assume that $R_{0}^{+}-1,~ R_{0}^{-}-1\in H^{\ell+1}(\mathbb{R}^3)$
 and $\mathbf{u}_{0}^{+},~ \mathbf{u}_{0}^{-}\in H^\ell(\mathbb{R}^3)$ for an integer
$\ell\geq 3$, where $R^\pm=\alpha^\pm\rho^\pm$. Then there exists a constant $\delta_0$ such that if
\begin{equation}\label{1.9}
K_0:=\left\|\left(R_{0}^{+}-1, R_{0}^{-}-1
\right)\right\|_{H^{\ell+1}\cap L^1}+\left\|\left(\mathbf{u}_{0}^{+},
\mathbf{u}_{0}^{-}\right)\right\|_{H^{\ell}\cap L^1} \leq \delta_0,
\end{equation}
 then the Cauchy problem of \eqref{1.4}--\eqref{1.5}  admits a unique
solution $\left(R^{+}, u^{+}, R^{-}, u^{-}\right)$ globally in time, and the solution satisfies the following decay rate for any $t\geq 0,$ and $0\leq k\leq \ell$ that
\begin{equation}\label{1.10}\left\|\nabla^k\left(\rho^{+}-\bar{\rho}^+, \rho^{-}-\bar{\rho}^-\right)(t)\right\|_{H^{\ell-k}}\lesssim
K_0(1+t)^{-\frac{3}{4}-\frac{k}{2}},
\end{equation}
\begin{equation}\label{1.11}\left\|\nabla^{k}\left(\mathbf{u}^{+},
\mathbf{u}^{-}\right)(t)\right\|_{H^{\ell-k}} \lesssim
K_0(1+t)^{-\frac{3}{4}-\frac{k}{2}},
\end{equation}
\begin{equation}\label{1.12}\left\|\nabla^{k}\left[\beta^+(R^+-1)+\beta^-(R^--1)\right](t)\right\|_{H^{\ell-k}} \lesssim
K_0(1+t)^{-\frac{3}{4}-\frac{k}{2}},
\end{equation}
\begin{equation}\label{1.13}\left\|\nabla^k\left(R^{+}-1, R^{-}-1
\right)(t)\right\|_{H^{\ell-k+1}}\lesssim
K_0(1+t)^{-\frac{1}{4}-\frac{k}{2}},
\end{equation}
where $\bar{\rho}^\pm=\rho^\pm(1,1)$ denote equilibrium states of
$\rho^\pm$ respectively, and
$\beta^\pm=\sqrt{\frac{\bar{\rho}^\mp}{{\bar{\rho}}^\pm}}$.

The main purpose of this work is to establish the long-time wave propagation of classical solutions to the generic viscous-capillary two-fluid model \eqref{1.4}–\eqref{1.5}. Specifically, we aim to prove that its solutions obey the so-called {\it generalized Huygens principle}—a property previously established for the one-phase compressible Navier-Stokes equations in Liu-Wang \cite{Lw}. This principle states that the time-asymptotic profile of the solution decomposes into a superposition of a stationary diffusion wave ({D-wave}) of the form $(1+t)^{-\frac{3}{2}}\big(1+\frac{|x|^2}{1+t}\big)^{-\frac{3}{2}}$, and a moving diffusion wave ({H-wave}) of the form $(1+t)^{-2}\big(1+\frac{(|x|-{\rm c}t)^2}{1+t}\big)^{-\frac{3}{2}}$, where ``{\rm c}" is the base sound speed. The {H-wave} originates from the wave-type components in the low-frequency of Green's function. Since the H-wave decays more slowly in $L^p(\mathbb{R}^3)$ for $p<2$ than the {D-wave}, its nonlinear coupling requires “borrowing” one derivative from the nonlinear terms—a step that typically relies on the conservative (divergence) structure of the equations, as seen in earlier works \cite{Ls,Lw} and recent works \cite{Bai,Wu10}. We would like to refer to other related results for the long-time behaviors of the one-phase model in \cite{Guo2,Hoff1,Hoff2, Lihl,L7,Mat1,Mat2,Wang-Tan,Zeng1,Zeng2} and the kinetic equations in \cite{Lihl2,Lihl3,L3,L4,L5,L6}.

Compared with classical one-phase models, the analysis of the two-fluid system faces two basical structural obstacles.
\textbf{First}, the system is inherently {\it non-conservative} due to the pressure terms $\alpha^\pm\nabla P(\rho^\pm)$ in the momentum equations. This prevents the direct use of cancellations and reformulation techniques that are essential in conservative settings.
\textbf{Second}, a detailed spectral analysis of the linearized system reveals that Green's function contains a singular low-frequency term of the type $|\xi|^{-1}e^{-|\xi|^2t}$ in the entries corresponding to the two densities. This term gives rise to a so-called {\it Riesz wave} with pointwise behavior like $(1+t)^{-1}\big(1+\frac{|x|^2}{1+t}\big)^{-1}$ (labeled {Riesz wave-I\!V}), which exhibits slower temporal decay and worse spatial regularity than the standard D-wave. Such Riesz waves also appear in other conservative systems such as the unipolar compressible Navier–Stokes–Poisson equations in Wang and Wu \cite{Wang}, where they verified that the electronic field containing the -1-order operator $\nabla(-\Delta)^{-1}$ will impede the propagation of acoustic wave and ultimately results that its solution does not obey the generalized Huygens principle. On the other hand, although there exists the {H-waves} and {Riesz wave-I\!V} for the bipolar compressible Navier-Stokes-Poisson equations in Wu and Wang \cite{Wu4}, these two waves do not interact with each other when dealing with the nonlinear coupling, since one can rewrite the original system in \cite{Wu4} into two subsystem by using some linear combinations of unknowns such that the {Riesz wave-I\!V} is only confronted with the D-wave (the propagation speed is zero). 

The key innovation of this paper lies in overcoming the simultaneous interaction between the H-wave and the Riesz wave within a {\it non-conservative} framework. Unlike previous models where these waves could be decoupled, here they are intrinsically coupled through the nonlinear terms. To handle this, we develop new pointwise estimates for the nonlinear coupling of distinct wave patterns, particularly the interaction between the Riesz wave and the H-wave. A crucial technical step is the reformulation of the non-conservative pressure term $\alpha^\pm\nabla P$ into a product structure that exposes improved decay properties, allowing us to close the nonlinear ansatz. This approach not only verifies the generalized Huygens principle for the non-conservative two-fluid system but also provides a set of convolution tools applicable to other non-conservative fluid models.

In the following, we first formulate the linearized system and present the Fourier representation of its Green's function. We then explain how the pointwise space-time description of Green's function, combined with refined nonlinear convolution estimates for different wave patterns, leads to the large-time asymptotics of the nonlinear problem.

\subsection{New formulation of system \eqref{1.4} and Main Results}
In this subsection, we devote ourselves to reformulating the system
\eqref{1.4} and stating the main results. The relations between the
pressures of $\eqref{1.4}_3$ implies
\begin{equation}\label{1.9}
{\rm d}P
  =
  s_+^2
 {\rm d}\rho^+
  =
   s_-^2
    {\rm d}\rho^-,
 \quad
  {\rm where}
   \quad
    s_\pm
     :=
      \sqrt{ \frac{{\rm d}P}{{\rm d}\rho^\pm}(\rho^\pm)}.
\end{equation}
Here $s_\pm$ represent the sound speed of each phase respectively.
As in \cite{Bresch1}, we introduce the fraction densities
\begin{equation}\label{1.10}
R^\pm
 =
  \alpha^\pm \rho^\pm,
\end{equation}
which together with the relation $\alpha^++\alpha^-=1$ leads to
\begin{equation}\label{1.11}
{\rm d}
 \rho^+
  =
   \frac{1}{\alpha_+}
    ({\rm d}R^+
       -
        \rho^+
         {\rm d}\alpha^+),
  \quad
   {\rm d}
    \rho^-
     =
      \frac{1}{\alpha_-}
       ({\rm d}R^-
         +
          \rho^-
           {\rm d}\alpha^+).
\end{equation}
From \eqref{1.9}--\eqref{1.10}, we finally get
\begin{equation}\notag
{\rm d}\alpha^+
 =
  \frac{\alpha^-s_+^2}{\alpha^-\rho^+s_+^2+\alpha^+\rho^-s_-^2}
   {\rm d}R^+
    -
     \frac{\alpha^+s_-^2}{\alpha^-\rho^+s_+^2+\alpha^+\rho^-s_-^2}
   {\rm d}R^-.
\end{equation}
Substituting the above equality into \eqref{1.11} yields
\begin{equation}\notag
{\rm d}\rho^+
 =
  \frac{s_-^2}{\alpha^-\rho^+s_+^2+\alpha^+\rho^-s_-^2}
   (\rho^-{\rm d}R^++\rho^+{\rm d}R^-),
\end{equation}
and
\begin{equation}\notag
{\rm d}\rho^-
 =
  \frac{s_+^2}{\alpha^-\rho^+s_+^2+\alpha^+\rho^-s_-^2}
   (\rho^-{\rm d}R^++\rho^+{\rm d}R^-),
\end{equation}
which combined with $(\ref{1.9})$ imply 
\begin{equation}\label{1.12}
{\rm d}P
 =
  \mathcal{C}^2(\rho^- {\rm d} R^+ +\rho^+ {\rm d}R^-),
\end{equation}
where
\begin{equation}\notag
\mathcal{C}^2
 :=
   \frac{s_+^2s_-^2}{\alpha^-\rho^+s_+^2+\alpha^+\rho^-s_-^2},
    \quad
     {\rm and}
      \quad
       s_\pm^2
        =
        \frac{{\rm d}P(\rho^\pm)}{{\rm d}\rho^\pm}
         =
          \tilde{\gamma}^\pm \frac{P(\rho^\pm)}{\rho^\pm}.
\end{equation}
Next, by using the relation: $\alpha^+ +\alpha^- =1$ again, we can
get
\begin{equation}\label{1.13}
\frac{R^+}{\rho^+}
 +
  \frac{R^-}{\rho^-}
   =1,
    \quad
     {\rm and \ therefore}
      \quad
       \rho^-
        =
         \frac{R^-\rho^+}{\rho^+-R^+}.
\end{equation}
From $\eqref{1.4}_3$, we have
\begin{equation}\notag
\varphi(\rho^+, R^+, R^-)
 :=
  P(\rho^+)
   -
    P\left(\frac{R^-\rho^+}{\rho^+-R^+}\right)
     =
      0.
\end{equation}
Consequently, for any given two positive constants $\tilde R^+$ and
$\tilde R^-$, there exists $\tilde \rho^+>\tilde R^+$ such that
\begin{equation}\notag
\varphi(\tilde \rho^+, \tilde R^+, \tilde R^-)=0.
\end{equation}
Differentiating $\varphi$ with respect to $\rho^+$, we get
\begin{equation}\notag
\frac{\partial\varphi}{\partial\rho^+}(\rho^+, R^+, R^-)
 =
  s_+^2+
   s_-^2
    \frac{R^-R^+}{(\rho^+-R^+)^2},
\end{equation}
which implies
\begin{equation}\notag
\frac{\partial\varphi}{\partial\rho^+}(\tilde\rho^+, \tilde R^+,
\tilde R^-)>0.
\end{equation}
Thus, this together with Implicit Function Theorem and \eqref{1.10},
\eqref{1.13} implies that the unknowns $\rho^\pm$, $\alpha^\pm$ and
$\mathcal{C}$ can be given by
\begin{equation}\notag
\rho^\pm
     =
      \varrho^\pm(R^+,R^-),
\qquad
      \alpha^\pm
       =
        \alpha^\pm(R^+,R^-),
         \quad
     {\rm and \ therefore}
      \quad
      \mathcal{C}=\mathcal{C}(R^+, R^-).
\end{equation}
 We refer to [\cite{Bresch1}, pp. 614] for the details.
\par

Note that the non-conservation of system (\ref{1.4}) is mainly from two pressure terms $\alpha^\pm\nabla P^\pm$ in the nonlinear part, since the other nonlinear terms are of the divergence form. As mentioned above, the conservation is critical when deducing the generalized Huygens' principle. To this end, we still need to use the divergence form of the other nonlinear terms when dealing with the nonlinear coupling. Therefore, we consider the following system with respect to the unknowns $(R^\pm,\ \mathbf{m}^\pm=R^\pm u^\pm)$:
\begin{equation}\label{1.14}
\left\{\begin{array}{l}
\partial_{t} R^{\pm}+\operatorname{div}\mathbf{m}^\pm=0, \\
\partial_{t}\mathbf{m}^++\operatorname{div}\big(\frac{\mathbf{m}^+\otimes\ \!\mathbf{m}^+}{R^+}\big)+\alpha^{+} \mathcal{C}^{2}\big[\rho^{-} \nabla
R^{+}+\rho^{+}\nabla R^{-}\big] \\
\hspace{1.8cm}=\operatorname{div}\big\{\alpha^{+}\big[\mu^{+}\big(\nabla
\frac{\mathbf{m}^+}{R^+}+\nabla^{T} \frac{\mathbf{m}^+}{R^+}\big)
+\lambda^{+} \operatorname{div} \frac{\mathbf{m}^+}{R^+} \mathbb{I}\big]\big\}+\sigma^+R^+\nabla\Delta R^+, \\
\partial_{t}\mathbf{m}^-+\operatorname{div}\big(\frac{\mathbf{m}^-\otimes\ \!\mathbf{m}^-}{R^-}\big)+\alpha^{-} \mathcal{C}^{2}\big[\rho^{-} \nabla
R^{+}+\rho^{+}\nabla R^{-}\big] \\
\hspace{1.8cm}=\operatorname{div}\big\{\alpha^{-}\big[\mu^{-}\big(\nabla
\frac{\mathbf{m}^-}{R^+}+\nabla^{T} \frac{\mathbf{m}^-}{R^-}\big)
+\lambda^{-} \operatorname{div} \frac{\mathbf{m}^-}{R^-} \mathbb{I}\big]\big\}+\sigma^-R^-\nabla\Delta R^-,
\end{array}\right.
\end{equation}
subject to the initial condition
\begin{equation}\label{1.15} (R^{+}, \mathbf{m}^{+}, R^-, \mathbf{m}^{-})(x,
0)=(R_{0}^{+}, \mathbf{m}_{0}^{+}, R_{0}^+, \mathbf{m}_{0}^{-})(x)\rightarrow(\bar
R^{+}, \mathbf{0}, \bar R^{-}, \mathbf{0}) \quad
\hbox{as}\quad |x|\rightarrow\infty,
\end{equation}
where two positive constants  $\bar{R}^{+}$ and $\bar{R}^{-}$ denote
the background doping profile, and in the present paper are taken as
1 for simplicity.

\medskip
Now, we are in a position to state our main result.
\smallskip
\begin{theorem}\label{l 1.1}Assume that the initial data $(R_{0}^{+}-1,~ R_{0}^{-}-1)\in H^8(\mathbb{R}^3)$
 and $(\mathbf{m}_{0}^{+},~ \mathbf{m}_{0}^{-})\in H^7(\mathbb{R}^3)$, and has the compact support or satisfies the pointwise assumption for $|\alpha|\leq 4$ and $|\alpha'|\leq 3$ that
\begin{equation}\label{1.16}
|\partial_x^\alpha(R_0^\pm-1)|+|\partial_x^{\alpha'}\mathbf{m}_0^\pm)|\leq \varepsilon_0(1+|x|^2)^{-r},\ \ r>\frac{19}{10},
\end{equation}
then the Cauchy problem \eqref{1.14}--\eqref{1.15} admits a unique
solution $\left(R^{+}, \mathbf{m}^{+}, R^{-}, \mathbf{m}^{-}\right)$ globally in time
and the solution obeys the following pointwise space-time descriptions
\begin{equation}\label{1.17}
|R^\pm-1|\lesssim (1+t)^{-1}\Big(1+\frac{|x|^2}{1+t}\Big)^{-1}+(1+t)^{-\frac{3}{2}}\Big(1+\frac{(|x|-{\rm c}t)^2}{1+t}\Big)^{-1},
\end{equation}
\begin{equation}\label{1.18}
|\partial_x R^\pm|+|\mathbf{m}^\pm|\lesssim (1+t)^{-\frac{3}{2}}\Big(1+\frac{|x|^2}{1+t}\Big)^{-1}+(1+t)^{-2}\Big(1+\frac{(|x|-{\rm c}t)^2}{1+t}\Big)^{-1},
\end{equation}
where the propagation speed ${\rm c}$ is 
\begin{equation}\label{1.19}
{\rm c}=\sqrt{\frac{P'(\bar\rho^+)P'(\bar\rho^-)}{\alpha^-\bar\rho^+P'(\bar\rho^+)+\alpha^+\bar\rho^-P'(\bar\rho^-)}\Big(\frac{\bar{\rho}^-}{\bar{\rho}^+}+\frac{\bar{\rho}^+}{\bar{\rho}^-}\Big)},\ {\rm with}\ \bar\rho^\pm=\rho^\pm(1,1).
\end{equation}
Moreover, we have the following refined pointwise description for the linear combination of two fraction densities
\begin{align}\label{1.20}
|\bar\rho^-(R^+-1)+\bar{\rho}^+(R^--1)|
\lesssim (1+t)^{-\frac{3}{2}}\Big(1+\frac{|x|^2}{1+t}\Big)^{-1}+(1+t)^{-2}\Big(1+\frac{(|x|\!-\!{\rm c}t)^2}{1+t}\Big)^{-1},
\end{align}
which results in the pointwise description for the densities $(\rho^+,\rho^-)$ as
\begin{align}\label{1.21}
|(\rho^+-\bar\rho^+,\rho^--\bar{\rho}^-)|
\lesssim (1+t)^{-\frac{3}{2}}\Big(1+\frac{|x|^2}{1+t}\Big)^{-1}+(1+t)^{-2}\Big(1+\frac{(|x|\!-\!{\rm c}t)^2}{1+t}\Big)^{-1}.
\end{align}
\end{theorem}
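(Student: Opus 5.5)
The proof follows the Green's function approach of Liu--Wang \cite{Lw}, combined with the global existence and $L^2$ decay of \cite{Liy}.

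\textbf{Step 1: linear analysis.} Linearize \eqref{1.14} at $(1,\mathbf{0},1,\mathbf{0})$ and write it as $\partial_t U+\mathcal{L}U=\mathcal{N}(U)$ with $U=(R^+-1,\mathbf{m}^+,R^--1,\mathbf{m}^-)$.
\begin{itemize}
\item Take the Fourier transform and diagonalize the $8\times8$ symbol. Use the combinations $\bar\rho^-(R^+-1)+\bar\rho^+(R^--1)$ (acoustic mode, speed ${\rm c}$ in \eqref{1.19}) and a complementary combination (a purely dissipative, capillarity-driven mode). The divergence-free parts of $\mathbf{m}^\pm$ give heat-type modes.
\item Low frequency: the Green's function $G$ consists of wave parts $e^{\pm i{\rm c}|\xi|t-\kappa|\xi|^2t}$, heat kernels, and the singular piece $|\xi|^{-1}e^{-|\xi|^2t}$ in the $(R^\pm,\mathbf{m}^\pm)$ entries. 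This singular piece comes from $\widehat{R}\sim\frac{i\xi}{|\xi|^2}\cdot\widehat{\mathbf m}$ in the non-acoustic mode.
\item Pointwise: via Liu--Wang's Kirchhoff-type representation, the wave parts give H-waves $(1+t)^{-2}(1+\frac{(|x|-{\rm c}t)^2}{1+t})^{-N}$ and the heat parts give D-waves. The singular piece gives the Riesz wave-I\!V $(1+t)^{-1}(1+\frac{|x|^2}{1+t})^{-1}$, and only the density components not in the acoustic combination see it.
\item High frequency: $G$ is a singular part (Dirac-type terms plus exponentially decaying kernels, due to the capillary/viscous parabolicity) plus a remainder bounded by $e^{-ct}e^{-|x|/C}$.
\end{itemize}

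\textbf{Step 2: nonlinear ansatz.} Write Duhamel's formula $U=G*U_0+\int_0^tG(t-s)*\mathcal{N}(U)(s)ds$. The ansatz $M(T)$ is the sup over $t\le T$ of the ratios of $|R^\pm-1|$, $|\partial_xR^\pm|+|\mathbf{m}^\pm|$ and $|\bar\rho^-(R^+-1)+\bar\rho^+(R^--1)|$ to the right-hand sides of \eqref{1.17}, \eqref{1.18}, \eqref{1.20}, with higher derivatives controlled in $L^\infty$ by the $H^8$ energy bound and Sobolev. The initial-data part $G*U_0$ is handled with the decay \eqref{1.16}; the restriction $r>19/10$ is what makes the Riesz convolution in space decay like \eqref{1.17}.

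\textbf{Step 3: structure of $\mathcal{N}$.} Split the nonlinear terms into two classes.
\begin{itemize}
\item Conservative terms: convection, viscosity and the capillary term, rewritten as $\operatorname{div}$ of quadratic quantities, e.g. $\sigma R\nabla\Delta R=\operatorname{div}(\cdots)$. For these one derivative is moved onto $G$, gaining $(1+t-s)^{-1/2}$ against the H-wave.
\item Non-conservative pressure: write $\alpha^+\mathcal{C}^2[\rho^-\nabla R^++\rho^+\nabla R^-]$ minus its linear part as a nonlinearity of the form $(R-1)\nabla[\bar\rho^-(R^+-1)+\bar\rho^+(R^--1)]+O(|R-1|^2)$ plus derivative terms. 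This uses $\mathcal{C}^2(\rho^-dR^++\rho^+dR^-)=dP$ and the equal-pressure condition.
\end{itemize}
Then $\alpha^+\nabla P=\nabla(\alpha^+P)-P\nabla\alpha^+$. The first piece is conservative; for the second, $P-\bar P$ is a function of the acoustic combination up to quadratic error. With \eqref{1.20} it therefore carries the improved rates $(1+t)^{-3/2}$ and $(1+t)^{-2}$ instead of the Riesz rate, so it is at least as good as a conservative term.

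\textbf{Step 4: convolution estimates (main obstacle).} One needs sharp bounds for space-time convolutions of each Green's function piece with each source profile:
\begin{itemize}
\item the Riesz kernel $\partial_x|\xi|^{-1}$-type against products of the Riesz wave with H-waves;
\item H-wave kernels against Riesz $\times$ H sources.
\end{itemize}
The plan is to follow Liu--Wang, splitting the $(y,s)$ domain into the regions $s\le t/2$ versus $s\ge t/2$, and $|y|\le\sqrt{1+s}$, near the cone $||y|-{\rm c}s|\le\sqrt{1+s}$, and elsewhere. The new ingredient is the Riesz $\times$ H interaction. The source $(1+s)^{-1}(\cdot)^{-1}\cdot(1+s)^{-2}(\cdot)^{-1}$, when it is convolved with an H-wave kernel, must reproduce $(1+t)^{-2}(1+\frac{(|x|-{\rm c}t)^2}{1+t})^{-1}$. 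The borderline time integrals will be the danger point, and I would lean on the improved decay from Step 3 to avoid logarithmic losses. Closing gives $M(T)\lesssim\varepsilon_0+M(T)^2$, hence \eqref{1.17}, \eqref{1.18} and \eqref{1.20}.

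Finally, \eqref{1.21} follows from $d\rho^\pm\propto\rho^-dR^++\rho^+dR^-$ (computed before \eqref{1.12}), Taylor expansion at $(1,1)$, and \eqref{1.20} together with the square of \eqref{1.17}.
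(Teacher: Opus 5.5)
Your architecture matches the paper's. It has the Fourier/Hodge analysis of the linearization and the Riesz wave-IV from $|\xi|^{-1}e^{-|\xi|^2t}$ in the $n^\pm$–$\mathbf m^\pm$ entries. That wave cancels in $\theta:=\bar\rho^-n^++\bar\rho^+n^-$. You then use a Duhamel ansatz carrying the improved bound for $\theta$, and a mean-value argument for \eqref{1.21}.

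\textbf{Where your route differs.} The genuinely different step is your integration by parts of the pressure term in Step 3.

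\emph{The paper's route.} It expands $Q_1=\sum_{i=1}^7\mathcal S_i$ and keeps everything in non-divergence form. Its quadratic parts are of type $n^\pm\nabla\theta$ ($\mathcal S_2,\mathcal S_4$) and $\theta\,\nabla n^\pm$ ($\mathcal S_5,\mathcal S_7$).

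\emph{Your route.} You write $\alpha^+\nabla P=\nabla\big(\alpha^+(P-\bar P)\big)-(P-\bar P)\nabla\alpha^+$. The level set $\{P=\bar P\}$ is exactly the line $\theta=0$: there $\rho^\pm=\bar\rho^\pm$, so $\alpha^++\alpha^-=1$ forces $n^+/\bar\rho^++n^-/\bar\rho^-=0$. Hence $P-\bar P=\theta\,g(n^+,n^-)$ with $g$ smooth, and the only non-conservative remainder is $g\,\theta\,\nabla\alpha^+$.

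\emph{What each buys.}
\begin{itemize}
\item Your form uses only the bound \eqref{1.20} on $\theta$ itself. The paper's $\mathcal S_2,\mathcal S_4$ need $\nabla\theta$ to decay $(1+t)^{-1/2}$ faster than $\nabla n^\pm$, which \eqref{5.5} as written does not provide.
\item The price is an extra divergence term with argument $O(|n^\pm||\theta|)$. Under the ansatz this decays exactly like the viscous arguments $n^\pm\nabla\mathbf m^\pm$ already present in $F_1,F_2$, so it adds no new type of conservative estimate.
\end{itemize}
In that sense your route is slightly cleaner.

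\textbf{The gap is in Step 4.} Calling $(P-\bar P)\nabla\alpha^+$ ``at least as good as a conservative term'' skips the paper's main difficulty.
\begin{itemize}
\item This source is of size $(1+s)^{-3}\big(1+\frac{|y|^2}{1+s}\big)^{-2}+(1+s)^{-4}\big(1+\frac{(|y|-{\rm c}s)^2}{1+s}\big)^{-2}$. It must be convolved with $G$ itself, with no derivative on the kernel.
\item Against the Riesz-IV part of $G_{12},G_{14}$ this is the paper's $K_4$.
\item Against the Huygens part $(1+t-s)^{-2}\big(1+\frac{(|x-y|-{\rm c}(t-s))^2}{1+t-s}\big)^{-N}$ of $G_{12},G_{22},G_{24}$ it is $K_7$.
\end{itemize}
Neither follows from the conservative estimates of Lemma \ref{A.4}, which all use the kernel $(1+t-s)^{-5/2}$. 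The $(1+s)^{-1/2}$ gained on the source does not replace the $(1+t-s)^{-1/2}$ that a kernel derivative would give in the subregion analysis. That loss is exactly why the cone profile in \eqref{1.17}–\eqref{1.18} is $(\cdot)^{-1}$ rather than Liu–Wang's $(\cdot)^{-3/2}$.

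Your Step 4 lists only convolutions against Riesz$\times$H products. In your own decomposition those products occur only inside divergences. The estimates actually needed for the non-conservative remainder are $K_4$ and $K_7$, which the paper proves by five-region space–time decompositions. Without them, $M(T)\lesssim\varepsilon_0+M(T)^2$ is not established.

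\textbf{Two minor corrections.}
\begin{itemize}
\item With capillarity the high-frequency part is heat-like (the paper's $G_{S1},G_{S2}$), not Dirac-type.
\item The condition $r>19/10$ is needed for the Huygens part of the initial propagation ($\mathcal I_3$ in Lemma \ref{A.3}). The Riesz part only needs $r>3/2$.
\end{itemize}
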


\begin{remark}It is obvious that the propagation speed for the two-phase fluid model (\ref{1.4}) is similar to $\sqrt{P'(\bar\rho)}$ for the one-phase fluid model (the compressible Navier-Stokes system) in \cite{Ls,Lw}  when $\bar\rho^+=\bar\rho^-$. 
\end{remark}

\begin{remark} The component like $\frac{1}{|\xi|}e^{-|\xi|^2t}$ in the low frequency of Green's function generates the Riesz wave-I\!V: $(1+t)^{-1}(1+\frac{|x|^2}{1+t})^{-1}$ in the pointwise space-time descriptions of both Green's function and  the fraction densities $R^\pm$, which  clearly has slower temporal decay than the heat kernel.
\end{remark}

\begin{remark}Although linear combinations of the fraction densities are applied, the model (\ref{1.4}) retains an essentially non-conservative structure. Consequently, the profile $\big(1+\frac{(|x|-{\rm c}t)^2}{1+t}\big)^{-1}$ of the Huygens wave in Theorem \ref{l 1.1} is weaker than the $\big(1+\frac{(|x|-{\rm c}t)^2}{1+t}\big)^{-\frac{3}{2}}$ for conservative systems like the Navier-Stokes equations \cite{Ls,Lw} and their bipolar Poisson-coupled counterparts \cite{Wu4,Wu5}. This weaker spacial-decay stems from the non-divergence pressures, however, it is exactly consistent with the Riesz wave-I\!V: $(1+t)^{-1}(1+\frac{|x|^2}{1+t})^{-1}$ mentioned above for the fraction densities.
\end{remark}

\begin{remark}The refined estimate (\ref{1.21}) can be traced to two sources: the linear combination inherent in Green's function (as seen in (\ref{1.20})), and the relation $(\rho^+-\bar\rho^+,\rho^--\bar{\rho}^-)\sim \bar\rho^-(R^+-1)+\bar{\rho}^+(R^--1)$ obtained via the Mean Value Theorem. This estimate is crutial for closing the nonlinear coupling ansatz, as it supplies the necessary decay properties to handle the problematic non-conservative terms $\alpha^\pm\nabla P$.
\end{remark}

\begin{remark}As a corollary, one can have the following $L^p$-estimate of the unknowns:
\begin{align}\label{1.22}
&\|R^\pm-1\|_{L^p(\mathbb{R}^3)}\lesssim \left\{\!\!\begin{array}{cc}
  (1+t)^{-(\frac{3}{2}-\frac{5}{2p})}, & \frac{3}{2}<p\leq2, \\
   (1+t)^{-(1-\frac{3}{2p})}, & 2\leq p\leq\infty,\end{array}\right.\\
&\|\rho^\pm-\bar{\rho}^\pm\|_{L^p(\mathbb{R}^3)}+\|\mathbf{m}^\pm\|_{L^p(\mathbb{R}^3)}\lesssim \left\{\!\!\begin{array}{cc}
  (1+t)^{-(2-\frac{5}{2p})}, & \frac{3}{2}<p\leq2, \\
   (1+t)^{-(\frac{3}{2}-\frac{3}{2p})}, & 2\leq p\leq\infty.
\end{array}\right.
\end{align}
\end{remark}

\begin{remark} Due to its importance in mathematics and physics, there also have been some efforts
in studying the related model (\ref{1.4}) without capillary forces effects, that is, $\sigma^\pm=0$ in the system (\ref{1.4}). As for this model, on one hand, it is relatively easy to deduce the global existence and temporal decay rate of the strong (or smooth) solutions under suitably small perturbation of initial data when two pressures satisfy $P^{+}\left(\rho^{+}\right)-P^{-}\left(\rho^{-}\right)=f(\alpha^{-}
\rho^{-})\neq0$ with a constructed condition of $f$, see the details in \cite{Evje9}. The pointwise space-time behavior of the case considered in \cite{Evje9} will be presented in a forthcoming paper, where a different phenomenon will be verified.
\end{remark}

At last, when the two pressures are the same (the common pressure) and there are not capillary forces effects, it is a more challenging open problem due to the fact that the system is partially dissipative. More recently, the first result on the global existence and the temporal decay rate of the strong (or smooth) solutions under suitably small perturbation of initial data  was established in \cite{Wug1} in Sobolev framework and \cite{Shou} in critical Besov framework, respectively. As for this case, the zero eigenvalues of Green's matrix result in the non-dissipative properties for densities and fraction densities. Thus, it is rather hard to deduce the similar results as in the present paper due to some nonlinear terms containing the densities (or fraction densities) only exhibiting the same decay rate as the linear term, since there are no ready-made convolution
estimates available on the nonlinear interaction of a Huygens wave and a non-decay diffusion wave up to now. This will be considered in future.

\smallskip
\smallskip

\indent Now, we outline the proof strategy for Theorem \ref{l 1.1} and highlight the main difficulties and key techniques. Note that only the two pressure terms are not of the divergence form in the nonlinear terms, to utilize the divergence form of nonlinear terms except these two terms in nonlinear analysis, we consider the unknowns $(R^\pm,\ \mathbf{m}^\pm=R^\pm \mathbf{u}^\pm)$ instead of the unknowns $(R^\pm,\ \mathbf{u}^\pm)$ in 
Li $et.\ al.$ \cite{Liy}. This transformation alters the structure of the nonlinear terms. To see this, by taking $n^{\pm}=R^{\pm}-1$,
one can write the corresponding linearized system of model \eqref{1.14}
in terms of the variables $(n^+, \mathbf{m}^+, n^-, \mathbf{m}^-)$:
\begin{equation}\label{1.27}
\left\{\begin{array}{l}
\partial_{t} n^++\operatorname{div}\mathbf{m}^+=0, \\
\partial_{t}\mathbf{m}^{+}+\beta_1\nabla n^++\beta_2\nabla
n^--\nu^+_1\Delta \mathbf{m}^+-\nu^+_2\nabla\operatorname{div} \mathbf{m}^+-\sigma^+\nabla\Delta n^+=F_1, \\
\partial_{t} n^-+\operatorname{div}\mathbf{m}^-=0, \\
\partial_{t}\mathbf{m}^{-}+\beta_3\nabla n^++\beta_4\nabla
n^--\nu^-_1\Delta \mathbf{m}^--\nu^-_2\nabla\operatorname{div} \mathbf{m}^--\sigma^-\nabla\Delta n^-=F_2, \\
\end{array}\right.
\end{equation}
where $\nu_{1}^{\pm}=\frac{\mu^{\pm}}{\bar{\rho}^{\pm}}$, 
$\nu_{2}^{\pm}=\frac{\mu^{\pm}+\lambda^{\pm}}{\bar{\rho}^{\pm}}>0$, and the coefficients $\beta_i$ with $i=1,2,3,4$ related to the pressure terms satisfy that
\begin{equation}\label{1.8}
\beta_{1}=\frac{\mathcal{C}^{2}(1,1)
\bar{\rho}^{-}}{\bar{\rho}^{+}},\ \ 
\beta_{2}=\beta_{3}=\mathcal{C}^{2}(1,1),\ \ 
\beta_{4}=\frac{\mathcal{C}^{2}(1,1)
\bar{\rho}^{+}}{\bar{\rho}^{-}}. 
\end{equation}
We shall also rewrite the nonlinear terms $F_1$ and $F_2$ as 
\begin{equation}\label{1.29}
F_1=-\underbrace{[\alpha^+\nabla P-(\beta_1\nabla n^++\beta_2\nabla n^-)]}_{Q_1}+\cdots,\ F_2=-\underbrace{[\alpha^-\nabla P-(\beta_3\nabla n^++\beta_4\nabla n^-)]}_{Q_2}+\cdots,
\end{equation}
 where ``$\cdots$" are the terms of the divergence form in $F_1$ and $F_2$, and they will not bring us any new difficulty. The detailed expressions of $F_1$ and $F_2$ will be given in the next section.
 
 The main difficulties stem from three interconnected features of the two-phase model \eqref{1.4}:  (i) the presence of a nonlocal component of the form $\frac{1}{|\xi|}e^{-|\xi|^2t}$ in the low frequency of Green function; (ii) the existence of Huygens waves in the low frequency of its Green's function; (iii) the inherent non-conservative structure of the system. Superficially, this bears a resemblance to the bipolar Navier-Stokes-Poisson system studied by Wu and Wang \cite{Wu4}, which also exhibits Huygens waves and a non-conservative term arising from the nonlocal electric field $\nabla\phi=-\nabla(-\Delta)^{-1}\rho$.
 While the analysis in \cite{Wu4} relied on rewriting the system via linear combinations of unknowns to exploit a partially conservative structure, coupled with the assumption $\nabla\phi_0\in L^1$ to enhance decay, such techniques are not directly transferable to the two-phase model \eqref{1.4}. The non-conservative pressure terms $\alpha^\pm\nabla P$ in our model are intrinsically coupled and cannot be decoupled through a linear transformation, presenting a fundamental obstacle that necessitates a novel analytical framework.

The main analytical difficulties arise from the low-frequency behavior of Green's function for the compressible part, analyzed via Hodge decomposition. The slower pointwise decay of the fraction densities in (\ref{1.17}) can be attributed to the following two key reasons.

 First, the low-frequency spectrum contains two distinct real eigenvalues $\lambda_3$ and $\lambda_4$:
\begin{equation}\label{1.30}\begin{cases}
\lambda_3=\frac{-(\beta_1\nu^-+\beta_4\nu^+)+\sqrt{(\beta_1\nu^-+\beta_4\nu^+)^2-4(\beta_1+\beta_4)(\beta_1\sigma^-+\beta_4\sigma^+)}}{2(\beta_1+\beta_4)}|\xi|^2+\mathcal O(|\xi|^4),\\
\lambda_4=\frac{-(\beta_1\nu^-+\beta_4\nu^+)-\sqrt{(\beta_1\nu^-+\beta_4\nu^+)^2-4(\beta_1+\beta_4)(\beta_1\sigma^-+\beta_4\sigma^+)}}{2(\beta_1+\beta_4)}|\xi|^2+\mathcal O(|\xi|^4),\\
\end{cases}\end{equation}
which directly influence the diffusion structure and lead to weakened temporal decay. Their corresponding project operators $P^3$ and $P^4$ are given by
\begin{equation*}\begin{split}\label{1.31}P^3(\xi)=&\begin{pmatrix}
\frac{\beta_1\beta_4(\nu^+-\nu^-)}{(\beta_1+\beta_4)\mathcal
R}-\frac{\beta_4\tilde r_4}{\mathcal R}&-\frac{\beta_4}{\mathcal
R|\xi|}&\frac{\beta_2\beta_4(\nu^+-\nu^-)}{(\beta_1+\beta_4)\mathcal
R}+\frac{\beta_2\tilde \lambda_4}{\mathcal R}
&\frac{\beta_2}{\mathcal R|\xi|}\\
0&-\frac{\beta_4(\beta_4\nu^++\beta_1\nu^-)}{(\beta_1+\beta_4)\mathcal
R}-\frac{\beta_4\tilde \lambda_4}{\mathcal R}&0&
\frac{\beta_2(\beta_4\nu^++\beta_1\nu^-)}{(\beta_1+\beta_4)\mathcal R}+\frac{\beta_2\tilde r_4}{\mathcal R}\\
\frac{\beta_1\beta_2(\nu^--\nu^+)}{(\beta_1+\beta_4)\mathcal
R}+\frac{\beta_2\tilde \lambda_4}{\mathcal
R}&\frac{\beta_2}{\mathcal
R|\xi|}&\frac{\beta_1\beta_4(\nu^--\nu^+)}{(\beta_1+\beta_4)\mathcal
R}-\frac{\beta_1\tilde \lambda_4}{\mathcal R}
&-\frac{\beta_1}{\mathcal R|\xi|}\\
0&\frac{\beta_2(\beta_4\nu^++\beta_1\nu^-)}{(\beta_1+\beta_4)\mathcal
R}+\frac{\beta_2\tilde \lambda_4}{\mathcal R}&0&
-\frac{\beta_1(\beta_4\nu^++\beta_1\nu^-)}{(\beta_1+\beta_4)\mathcal
R}-\frac{\beta_1\tilde \lambda_4}{\mathcal R}
\end{pmatrix}\\
&+\mathcal O(|\xi|),\end{split}
\end{equation*}
\begin{equation*}
\begin{split}\label{1.32}P^4(\xi)=&-\begin{pmatrix}
\frac{\beta_1\beta_4(\nu^+-\nu^-)}{(\beta_1+\beta_4)\mathcal
R}-\frac{\beta_4\tilde \lambda_3}{\mathcal
R}&-\frac{\beta_4}{\mathcal
R|\xi|}&\frac{\beta_2\beta_4(\nu^+-\nu^-)}{(\beta_1+\beta_4)\mathcal
R}+\frac{\beta_2\tilde \lambda_3}{\mathcal R}
&\frac{\beta_2}{\mathcal R|\xi|}\\
0&-\frac{\beta_4(\beta_4\nu^++\beta_1\nu^-)}{(\beta_1+\beta_4)\mathcal
R}-\frac{\beta_4\tilde \lambda_3}{\mathcal R}&0&
\frac{\beta_2(\beta_4\nu^++\beta_1\nu^-)}{(\beta_1+\beta_4)\mathcal R}+\frac{\beta_2\tilde \lambda_3}{\mathcal R}\\
\frac{\beta_1\beta_2(\nu^--\nu^+)}{(\beta_1+\beta_4)\mathcal
R}+\frac{\beta_2\tilde \lambda_3}{\mathcal
R}&\frac{\beta_2}{\mathcal
R|\xi|}&\frac{\beta_1\beta_4(\nu^--\nu^+)}{(\beta_1+\beta_4)\mathcal
R}-\frac{\beta_1\tilde \lambda_3}{\mathcal R}
&-\frac{\beta_1}{\mathcal R|\xi|}\\
0&\frac{\beta_2(\beta_4\nu^++\beta_1\nu^-)}{(\beta_1+\beta_4)\mathcal
R}+\frac{\beta_2\tilde \lambda_3}{\mathcal R}&0&
-\frac{\beta_1(\beta_4\nu^++\beta_1\nu^-)}{(\beta_1+\beta_4)\mathcal
R}-\frac{\beta_1\tilde \lambda_3}{\mathcal R}
\end{pmatrix}\\
&+\mathcal O(|\xi|).\end{split}
\end{equation*}
It is clear that there exists the term like $\frac{1}{|\xi|}e^{-|\xi|^2t}$ in the corresponding entries of $\hat{G}^{l}$ to the two fraction densities $R^\pm$, which will result that 
\begin{equation}\label{1.33}
\begin{array}{rl}
\|n^\pm\|_{L^2}\lesssim (1+t)^{-\frac{1}{4}}.
\end{array}
\end{equation}
Besides, as for the pointwise space-time behavior, the term like $|\xi|^{-1}e^{-|\xi|^2t}$ will give rise to the so-called Riesz wave-I\!V, whose pointwise decay is captured by the profile $(1+t)^{-\frac{n-1}{2}}\big(1+\frac{|x|^2}{1+t}\big)^{-\frac{n-1}{2}}$ for $x\in\mathbb{R}^n$ with $n\geq2$.
In contrast to the treatment in \cite{Wu4} for the bipolar compressible Navier-Stokes-Poisson system, it is unnatural here to impose additional assumptions on the initial data $\Lambda^{-1}n^\pm_0$ in order to improve the decay rate. Instead, we aim to derive sharp pointwise estimates for the solution—a generalized Huygens principle—under the realistic situation where the $L^2$-decay of certain unknowns is slower than that of the classical heat kernel.
The main innovation of this work lies in handling the nonlinear interaction between waves of fundamentally different types. As seen from the representations of 
of $P^3$ and $P^4$ above, and $P^1$ and $P^2$ below, the nonlinear convolution necessarily involves the coupling of the slowly decaying Riesz wave-I\!V with the faster decaying Huygens waves. This structure is essentially different from that in \cite{Wu4} and raises the core difficulty:
How can one obtain sharp estimates for the convolution of the Riesz wave-I\!V with a Huygens wave under
 the slower decay in the ansatz of the solution to the nonlinear problem? In fact,
according to the above initial propagation, we postulate the following ansatz for the nonlinear problem:
\begin{eqnarray}\label{1.34}
|\partial_x^\alpha(n^+,n^-)|\lesssim (1+t)^{-\frac{2+|\alpha|}{2}}\Big(1+\frac{|x|^2}{1+t}\Big)^{-\frac{2+|\alpha|}{2}}
          +(1+t)^{-\frac{3+|\alpha|}{2}}\Big(1+\frac{(|x|-{\rm c}t)^2}{1+t}\Big)^{-1}.
\end{eqnarray}
To meet the minimal requirements for deriving the desired results from the convolution involving the Riesz wave-I\!V, we must extract improved decay rates for the pressure terms $\alpha^\pm\nabla P$. Note that the other terms, except for the two pressure terms $\alpha^\pm\nabla P$, are in divergence form (conservative structure). Hence, we focus primarily on the nonlinear estimates for the pressure terms $\alpha^\pm\nabla P$. At first glance, $\alpha^\pm\nabla P \sim n^-\nabla n^+ + n^+\nabla n^- +n^+\nabla n^++ n^-\nabla n^-$. Then, according to the above ansatz and the fact that the entry $G_{12}$ in Green's function corresponding to $n^+$ contains the Riesz wave-I\!V (which has the worst decay in both time and space), one encounters the following nonlinear convolution of the Riesz wave-I\!V and Huygens waves:
\begin{eqnarray}
\mathcal{N}_1= \int_0^t\!\int_{\mathbb{R}^3}
(1+t-\tau)^{-1}\Big(1+\frac{|x-y|^2}{1+t-\tau}\Big)^{-1}(1+\tau)^{-\frac{7}{2}}\Big(1+\frac{(|y|-{\rm c}\tau)^2}{1+\tau}\Big)^{-2}dyd\tau.\label{9.7}
\end{eqnarray}
When analyzing the pointwise behavior, a central task is to study the space-time convolution between the slowly decaying Riesz wave–I\!V and the nonlinear Huygens waves. A standard approach involves partitioning the spatial domain into the near-field  $(x,t)\in D_1=\{|x|\leq \sqrt{1+t}\}$ and the wave region
$D_2=\{||x|-{\rm c}t|\leq \sqrt{1+t}\}$. However, even within the simpler region
$D_1$, the time integration must be split into the sub-intervals: $0\leq \tau\leq\frac{t}{2}$ and $\frac{t}{2}\leq\tau\leq t$, denoted by $\mathcal{N}_{11}$ and $\mathcal{N}_{12}$, respectively. A direct estimate for the latter yields
\begin{eqnarray}\label{1.35}
\mathcal{N}_{12}&=&\int_0^\frac{t}{2}\!\int_{\mathbb{R}^3}(1+t-\tau)^{-1}\Big(1+\frac{|x-y|^2}{1+t-\tau}\Big)^{-1}(1+\tau)^{-\frac{7}{2}}\Big(1+\frac{(|y|-{\rm c}\tau)^2}{1+\tau}\Big)^{-2}dyd\tau\nonumber\\
&\lesssim& (1+t)^{-1}\int_0^{\frac{t}{2}}(1+\tau)^{-\frac{7}{2}}(1+\tau)^{\frac{5}{2}}d\tau\nonumber\\
&\lesssim& (1+t)^{-1}\ln(1+t)\lesssim (1+t)^{-1}\ln(1+t)\bigg(1+\frac{|x|^2}{1+t}\bigg)^{-N},\label{9.8}
\end{eqnarray}
which introduces a logarithmic growth factor. This result is strictly weaker than the decay rate prescribed by the ansatz (\ref{5.4}) for the unknowns 
$n^\pm$, presenting a major obstacle to closing the nonlinear argument.

\vspace{2mm}
To overcome this difficulty, we should conduct a refined analysis of the nonlinear terms $Q_1=(\beta_1\nabla n^++\beta_2\nabla n^-)-\alpha^+\nabla P$ and $Q_2=(\beta_3\nabla n^++\beta_4\nabla n^-)-\alpha^-\nabla P$ in $F_1$ and $F_2$ arising from two pressure terms. Through careful observation and algebraic reformulation, we find that they can be recast as:
$$
Q_1\sim n^+( \bar\rho^-\nabla n^++\bar\rho^+\nabla n^-)+\cdots,\ \ \ Q_2\sim n^-(\bar\rho^-\nabla n^++\bar\rho^+\nabla n^-)+\cdots,
$$ 
where ``$\cdots$" denote cubic terms with faster decay. The crucial insight lies in the structure of the linear propagators $P^3$ and $P^4$.
A hidden cancellation between their first and third rows endows the combined quantity
 $\bar\rho^-\nabla n^++\bar\rho^+\nabla n^-$ with an extra $(1+t)^{-\frac{1}{2}}$-decay factor: based on the cancellation of the first row and third row in  $P^3$ and $P^4$, that is
\begin{eqnarray}\label{1.36}
|\bar\rho^-n^++\bar\rho^+n^-|
\lesssim\bigg\{(1+t)^{-\frac{3}{2}}\Big(1+\frac{|x|^2}{1+t}\Big)^{-1}
          +(1+t)^{-2}\Big(1+\frac{(|x|-{\rm c}t)^2}{1+t}\Big)^{-1}\bigg\}.
\end{eqnarray}
This cancellation is fundamental and distinguishes our analysis from prior work. It transforms the problematic convolution into a new, refined form with significantly improved integrability:
\begin{eqnarray}
\mathcal{N}_1= \int_0^t\!\int_{\mathbb{R}^3}(1+t-\tau)^{-1}\Big(1+\frac{|x-y|^2}{1+t-\tau}\Big)^{-1}(1+\tau)^{-4}\Big(1+\frac{(|y|-{\rm c}\tau)^2}{1+\tau}\Big)^{-2}dyd\tau.\label{9.10}
\end{eqnarray}
Applying the precise convolution estimate $K_4$ from Lemma \ref{A.5} to $\mathcal{N}_1$ finally yields the sharp, desired bound:
\begin{eqnarray}
\mathcal{N}_1\lesssim (1+t)^{-1}\Big(1+\frac{|x|^2}{1+t}\Big)^{-1}
          +(1+t)^{-\frac{3}{2}}\Big(1+\frac{(|x|-{\rm c}t)^2}{1+t}\Big)^{-1},\label{9.11}
\end{eqnarray}
which is fully consistent with the original ansatz and devoid of any logarithmic loss.

The second difficulty comes from the non-conservative structure of the system (\ref{1.4}) and the wave operators in the low frequency of Green's function for the compressible part $\hat{G}^{l}$. In particular, we have
\begin{equation}\label{1.34}
\begin{cases}
\lambda_1=-\frac{\beta_1\nu^++\beta_4\nu^-}{2(\beta_1+\beta_4)}|\xi|^2+\mathrm{i}\sqrt{\beta_1+\beta_4}|\xi|+\mathcal O(|\xi|^3),\\
\lambda_2=-\frac{\beta_1\nu^++\beta_4\nu^-}{2(\beta_1+\beta_4)}|\xi|^2-\mathrm{i}\sqrt{\beta_1+\beta_4}|\xi|+\mathcal O(|\xi|^3),\\
\end{cases}\end{equation}
and their corresponding project operators $P^1$ and $P^2$ are given by
\begin{equation}\label{1.35}P^1(\xi)=\begin{pmatrix}
\frac{\beta_1}{2(\beta_1+\beta_4)}&\frac{\beta_1}{2(\beta_1+\beta_4)^\frac{3}{2}}\mathrm{i}&\frac{\beta_2}{2(\beta_1+\beta_4)}
&\frac{\beta_2}{2(\beta_1+\beta_4)^\frac{3}{2}}\mathrm{i}\\
-\frac{\beta_1}{2(\beta_1+\beta_4)^\frac{1}{2}}\mathrm{i}&\frac{\beta_1}{2(\beta_1+\beta_4)}&-\frac{\beta_2}{2(\beta_1+\beta_4)^\frac{1}{2}}\mathrm{i}&
\frac{\beta_2}{2(\beta_1+\beta_4)}\\
\frac{\beta_2}{2(\beta_1+\beta_4)}&\frac{\beta_2}{2(\beta_1+\beta_4)^\frac{3}{2}}\mathrm{i}&\frac{\beta_4}{2(\beta_1+\beta_4)}
&\frac{\beta_4}{2(\beta_1+\beta_4)^\frac{3}{2}}\mathrm{i}\\
-\frac{\beta_2}{2(\beta_1+\beta_4)^\frac{1}{2}}\mathrm{i}&\frac{\beta_2}{2(\beta_1+\beta_4)}&-\frac{\beta_4}{2(\beta_1+\beta_4)^\frac{1}{2}}\mathrm{i}&
\frac{\beta_4}{2(\beta_1+\beta_4)}
\end{pmatrix}+\mathcal O(|\xi|),\end{equation}
\begin{equation}\label{1.36}P^2(\xi)=\begin{pmatrix}
\frac{\beta_1}{2(\beta_1+\beta_4)}&-\frac{\beta_1}{2(\beta_1+\beta_4)^\frac{3}{2}}\mathrm{i}&\frac{\beta_2}{2(\beta_1+\beta_4)}
&-\frac{\beta_2}{2(\beta_1+\beta_4)^\frac{3}{2}}\mathrm{i}\\
\frac{\beta_1}{2(\beta_1+\beta_4)^\frac{1}{2}}\mathrm{i}&\frac{\beta_1}{2(\beta_1+\beta_4)}&\frac{\beta_2}{2(\beta_1+\beta_4)^\frac{1}{2}}\mathrm{i}&
\frac{\beta_2}{2(\beta_1+\beta_4)}\\
\frac{\beta_2}{2(\beta_1+\beta_4)}&-\frac{\beta_2}{2(\beta_1+\beta_4)^\frac{3}{2}}\mathrm{i}&\frac{\beta_4}{2(\beta_1+\beta_4)}
&-\frac{\beta_4}{2(\beta_1+\beta_4)^\frac{3}{2}}\mathrm{i}\\
\frac{\beta_2}{2(\beta_1+\beta_4)^\frac{1}{2}}\mathrm{i}&\frac{\beta_2}{2(\beta_1+\beta_4)}&\frac{\beta_4}{2(\beta_1+\beta_4)^\frac{1}{2}}\mathrm{i}&
\frac{\beta_4}{2(\beta_1+\beta_4)}
\end{pmatrix}+\mathcal O(|\xi|).\end{equation}
We see that the leading terms in each entry of $P^1$ and $P^2$ are constants, which is the same as the compressible Navier-Stokes equations. However, the lack of a divergence form in the nonlinear terms prevents us from directly applying classical convolution estimates for Huygens waves in \cite{Ls,Lw}.
To see this, let us recall the classical nonlinear convolution estimates on the interaction of Huygens waves in \cite{Ls,Lw}:
\begin{eqnarray}
&&\int_{0}^{t}\int_{\mathbb{R}^3}\uwave{(1+t-\tau)^{-\frac{5}{2}}}\Big(1+\frac{(|x-y|-{\rm c}(t-\tau))^2}{1\!+\!t\!-\!\tau}\Big)^{-N}(1+\tau)^{-4}\Big(1+\frac{(|y|-{\rm c}\tau)^2}{1\!+\!\tau}\Big)^{-3}dyd\tau\nonumber\\
&\lesssim& (1+t)^{-2}\Big(\Big(1+\frac{|x|^2}{1+t}\Big)^{-\frac{3}{2}}+\Big(1+\frac{(|x|-{\rm c}t)^2}{1+t}\Big)^{-\frac{3}{2}}\Big),\label{1.37}
\end{eqnarray}
where the extra $(1+t-\tau)^{-\frac{1}{2}}$-decay in the underline term arises precisely from the conservative (divergence) structure of the nonlinearity.
Since the momenta $\mathbf{m}^\pm$ decay at the heat‑kernel rate in $L^2$ \cite{Liy}, refining estimate (\ref{1.37}) in the usual way seems unattainable.
To resolve this difficulty, we devise a bold strategy that preserves the temporal decay by strategically sacrificing spatial decay in sub‑region estimates, especially near the wave cone. This compensates for the derivative loss induced by the non‑conservative terms. Through a meticulous and lengthy analysis of multiple space‑time subregions, we ultimately establish the following key convolution estimate:
\begin{align}
&\int_{0}^{t}\!\int_{\mathbb{R}^3}(1+t-\tau)^{-2}\Big(1+\frac{(|x-y|-{\rm c}(t-\tau))^2}{1+t-\tau}\Big)^{-N}(1+\tau)^{-4}\Big(1+\frac{(|y|-{\rm c}\tau)^2}{1+\tau}\Big)^{-2}dyd\tau\nonumber\\[3mm]
\lesssim &\ (1+t)^{-\frac{3}{2}}\Big(1+\frac{|x|^2}{1+t}\Big)^{-\frac{3}{2}}+(1+t)^{-2}\Big(1+\frac{(|x|-{\rm c}t)^2}{1+t}\Big)^{-1}.\label{1.38}
\end{align}
The proof, detailed in Lemma \ref{A.5}, relies on a delicate partition of the integration domain and optimized pointwise bounds, enabling us to recover the sharp decay needed for the ansatz despite the system's non‑conservation.

\bigbreak
\textbf{Discussion}

From the linear analysis, it is expected that improved decay rates for the two fractional momenta $\mathbf{m}^\pm$ in (\ref{1.18}) can be achieved if two distinct linear combinations, $\mathbf{m}^+-a_1\mathbf{m}^-$ and $\mathbf{m}^+-a_2\mathbf{m}^-$, can be constructed to compensate for the non-conservative structure arising from the two pressure terms in the momentum equations. Ultimately, this would yield the refined pointwise estimates (\ref{1.18}) for $\mathbf{m}^\pm$:
\begin{equation}\label{1.39}
|\mathbf{m}^\pm|\lesssim (1+t)^{-\frac{3}{2}}\Big(1+\frac{|x|^2}{1+t}\Big)^{-\frac{3}{2}}+(1+t)^{-2}\Big(1+\frac{(|x|-{\rm c}t)^2}{1+t}\Big)^{-\frac{3}{2}}.
\end{equation}

The first combination is found through a direct observation of the low-frequency projection matrices $P^1$ and $P^2$. A crucial structural property, $\beta_1\beta_4=\beta_2^2$, implies that the vectors $(P_{22}^1,P_{24}^1)$ and $(P_{42}^1,P_{44}^1)$ associated with $\mathbf{m}^\pm$ are \emph{proportional}. This allows us to immediately choose
\begin{equation*}
\mathbf{m}^+ - \frac{\beta_1}{\beta_2}\mathbf{m}^-
\end{equation*}
as the first combination. In this combination, the leading low-frequency terms become $|\xi|$, aligning it with a divergence structure and leading to the desired decay:
\begin{equation}\label{1.40}
\Big|\mathbf{m}^+-\frac{\beta_1}{\beta_2}\mathbf{m}^-\Big|\lesssim (1+t)^{-\frac{3}{2}}\Big(1+\frac{|x|^2}{1+t}\Big)^{-\frac{3}{2}}+(1+t)^{-2}\Big(1+\frac{(|x|-{\rm c}t)^2}{1+t}\Big)^{-\frac{3}{2}}.
\end{equation}

Finding a second, genuinely different linear combination is more subtle, as the proportionality used above cannot be applied again. Our approach leverages another good structure of the original system: the conservation of the \emph{total} fractional momentum $\mathbf{m}^++\mathbf{m}^-$ of the two-fluid model with equal pressures in (\ref{1.1}), combined with detailed Green's function estimates.
We denote the nonlinear couplings containing the pressure terms for $\mathbf{m}^\pm$ by $\tilde{\mathbf{m}}^\pm$, expressed via the low-frequency Green's functions $G_{ij}^\ell$:
\begin{equation}\label{1.41}
\begin{aligned}
\tilde{\mathbf{m}}^+&=\int_0^t G_{22}^\ell(\cdot,t-\tau)\ast_x Q_1(\cdot,\tau)d\tau+\int_0^t G_{24}^\ell(\cdot,t-\tau)\ast_x Q_2(\cdot,\tau)d\tau+\cdots,\\
\tilde{\mathbf{m}}^-&=\int_0^t G_{42}^\ell(\cdot,t-\tau)\ast_x Q_1(\cdot,\tau)d\tau+\int_0^t G_{44}^\ell(\cdot,t-\tau)\ast_x Q_2(\cdot,\tau)d\tau+\cdots,
\end{aligned}
\end{equation}
where $Q_1$ and $Q_2$ are non-divergence terms originating from the pressures (\ref{2.1}), and the rest terms don't affect the results. We then construct a candidate combination $\mathbf{m}^+-a_2\mathbf{m}^-$:
\begin{equation}\label{1.42}
\begin{aligned}
\tilde{\mathbf{m}}^+-a_2\tilde{\mathbf{m}}^- = &\int_0^t (G_{22}^l-a_2G_{42}^l)(\cdot,t-\tau)\ast \underbrace{(Q_1+Q_2)}_{\text{divergence form}}d\tau \\
&+ \int_0^t \underbrace{(-G_{22}^l+G_{24}^l+a_2G_{42}^l-a_2G_{44}^l)}_{=:G^*}\ast_x Q_2 \, d\tau +\cdots,
\end{aligned}
\end{equation}
where ``$\cdots$'' denotes middle and high-frequency parts not affecting the main Huygens wave analysis. Since $Q_1+Q_2$ is of divergence form, the first integral is manageable. The critical step is to choose $a_2$ such that the kernel $G^*$ has a leading term of order $|\xi|$ (not a constant). From the representation of $P^1$ or $P^2$, this requires
\begin{equation}\label{1.43}
-\beta_1+\beta_2+a_2\beta_2-a_2\beta_4=0,
\end{equation}
which yields $a_2=\frac{\beta_1-\beta_2}{\beta_2-\beta_4}$.
Using the fundamental relation $\beta_1\beta_4=\beta_2^2$, we find
\begin{equation*}
\frac{\beta_1-\beta_2}{\beta_2-\beta_4} \color{red}{=} \frac{\beta_1}{\beta_2},
\end{equation*}
provided $\beta_1 \neq \beta_2$. This shows that the second candidate combination degenerates to the first one. Therefore, this direct algebraic approach fails to produce a distinct second linear combination.

\textbf{Conclusion and Outlook:} The discovery of a genuinely different second linear combination for $\mathbf{m}^\pm$ remains an open and interesting problem. Its construction likely requires a deeper exploitation of the system's structure beyond the simple algebraic cancellation attempted here. Successfully finding it is essential to finally establish the refined pointwise estimates (\ref{1.39}) for the individual momenta $\mathbf{m}^\pm$, a task left for future work.

\section{\leftline {\bf{Green's function}}}
\setcounter{equation}{0}
\subsection{Linearization and Reformulation} In this subsection, we first reformulate the system.
Setting   \[ n^{\pm}=R^{\pm}-1,
\]
the Cauchy problem \eqref{1.14}--\eqref{1.15} can be reformulated as
\begin{equation}\label{2.1}
\left\{\begin{array}{l}
\partial_{t} n^++\operatorname{div}\mathbf{m}^+=0, \\
\partial_{t}\mathbf{m}^{+}+\beta_1\nabla n^++\beta_2\nabla
n^--\nu^+_1\Delta \mathbf{m}^+-\nu^+_2\nabla\operatorname{div} \mathbf{m}^+-\sigma^+\nabla\Delta n^+=F_1, \\
\partial_{t} n^-+\operatorname{div}\mathbf{m}^-=0, \\
\partial_{t}\mathbf{m}^{-}+\beta_3\nabla n^++\beta_4\nabla
n^--\nu^-_1\Delta \mathbf{m}^--\nu^-_2\nabla\operatorname{div} \mathbf{m}^--\sigma^-\nabla\Delta n^-=F_2, \\
\end{array}\right.
\end{equation}
where $\nu_{1}^{\pm}=\frac{\mu^{\pm}}{\bar{\rho}^{\pm}}$,
$\nu_{2}^{\pm}=\frac{\mu^{\pm}+\lambda^{\pm}}{\bar{\rho}^{\pm}}>0$,
$\beta_{1}=\frac{\mathcal{C}^{2}(1,1)
\bar{\rho}^{-}}{\bar{\rho}^{+}}$,
$\beta_{2}=\beta_{3}=\mathcal{C}^{2}(1,1)$,
$\beta_{4}=\frac{\mathcal{C}^{2}(1,1)
\bar{\rho}^{+}}{\bar{\rho}^{-}}$,
which imply
\begin{equation}\label{2.1(1)}
\beta_{1}\beta_{4}=\beta_{2}\beta_{3}=\beta_{2}^2=\beta_{3}^2.
\end{equation}
 Additionally,
the nonlinear terms are given by
\begin{equation}\label{2.2}
\begin{array}{rl}
F_{1}=&-\operatorname{div}\big(\frac{\mathbf{m}^+\otimes \mathbf{m}^+}{n^++1}\big)+\sigma^+R^+\nabla\Delta R^+\\
&+\mu^+{\rm div}\Big\{\frac{n^+}{\rho^+}\nabla\frac{\mathbf{m}^+}{n^++1}-\frac{1}{\rho^+}\nabla\frac{n^+\mathbf{m}^+}{n^++1}+\big(\frac{1}{\rho^+}-\frac{1}{\bar{\rho}^+}\big)\nabla \mathbf{m}^+\Big\} \\
&+\mu^+{\rm div}\Big\{\frac{n^+}{\rho^+}\nabla^t\frac{\mathbf{m}^+}{n^++1}-\frac{1}{\rho^+}\nabla^t\frac{n^+\mathbf{m}^+}{n^++1}+\big(\frac{1}{\rho^+}-\frac{1}{\bar{\rho}^+}\big)\nabla^t \mathbf{m}^+\Big\} \\
&+\lambda^+{\rm div}\Big\{\frac{n^+}{\rho^+}{\rm div}\frac{\mathbf{m}^+}{n^++1}I_3-\frac{1}{\rho^+}{\rm div}\frac{n^+\mathbf{m}^+}{n^++1}I_3+\Big(\frac{1}{\rho^+}-\frac{1}{\bar{\rho}^+}\Big){\rm div} \mathbf{m}^+I_3\Big\}\\
&-\underbrace{\big\{\mathcal{C}^{2}n^+\Big(\frac{\rho^-}{\rho^+}\nabla n^++\nabla n^-\Big)+\Big(\frac{\mathcal{C}^2\rho^-}{\rho^+}-\frac{\mathcal{C}^2\rho^-}{\rho^+}(1,1)\Big)\nabla n^++(\mathcal{C}^2-\mathcal{C}^2(1,1))\nabla n^-\big\}}_{\alpha^+\nabla P-(\beta_1\nabla n^++\beta_2\nabla n^-):=Q_1},
\end{array}
\end{equation}
and
\begin{equation}\label{2.3}
\begin{array}{rl}
F_{2}=&-\operatorname{div}\big(\frac{\mathbf{m}^-\otimes \mathbf{m}^-}{n^-+1}\big)+\sigma^-R^-\nabla\Delta R^-\\
&+\mu^-{\rm div}\Big\{\frac{n^-}{\rho^-}\nabla\frac{\mathbf{m}^-}{n^-+1}-\frac{1}{\rho^-}\nabla\frac{n^-\mathbf{m}^-}{n^-+1}+\big(\frac{1}{\rho^-}-\frac{1}{\bar{\rho}^-}\big)\nabla \mathbf{m}^-\Big\} \\
&+\mu^-{\rm div}\Big\{\frac{n^-}{\rho^-}\nabla^t\frac{\mathbf{m}^-}{n^-+1}-\frac{1}{\rho^-}\nabla^t\frac{n^-\mathbf{m}^-}{n^-+1}+\big(\frac{1}{\rho^-}-\frac{1}{\bar{\rho}^-}\big)\nabla^t \mathbf{m}^-\Big\} \\
&+\lambda^-{\rm div}\Big\{\frac{n^-}{\rho^-}{\rm div}\frac{\mathbf{m}^-}{n^-+1}I_3-\frac{1}{\rho^-}{\rm div}\frac{n^-\mathbf{m}^-}{n^-+1}I_3+\big(\frac{1}{\rho^-}-\frac{1}{\bar{\rho}^-}\big){\rm div} \mathbf{m}^-I_3\Big\}\\
&-\underbrace{\big\{\mathcal{C}^{2}n^-\big(\nabla n^++\frac{\rho^-}{\rho^+}\nabla n^-\big)+\big(\frac{\mathcal{C}^2\rho^+}{\rho^-}-\frac{\mathcal{C}^2\rho^+}{\rho^-}(1,1)\big)\nabla n^-+(\mathcal{C}^2-\mathcal{C}^2(1,1))\nabla n^+\big\}}_{\alpha^-\nabla P-(\beta_3\nabla n^++\beta_4\nabla n^-):=Q_2},
\end{array}
\end{equation}
where the last terms $Q_1$ and $Q_2$ in $F_1$ and $F_2$ have been given in (\ref{1.29}) in another simplified  form.

\
In
terms of the semigroup theory for evolutionary equation, we will
investigate the following initial value problem for the
corresponding linear system of \eqref{2.1} on the unknown $U:=(n^+,\mathbf{m}^+,n^-,\mathbf{m}^-)$:
\begin{equation}
\begin{cases}
U_t=\mathcal BU,\\
U\big|_{t=0}={U}_0,
\end{cases}   \label{2.4}
\end{equation}
where the operator $\mathcal B$ is given by
\begin{equation}\nonumber\mathcal B=\begin{pmatrix}
0&-\text{div}&0&0\\
-\beta_1\nabla+\sigma^+ \nabla\Delta&\nu_{1}^{+} \Delta+\nu_{2}^{+} \nabla \otimes \nabla&-\beta_2\nabla&0\\
0&0&0&-\text{div}\\
-\beta_3\nabla&0&-\beta_4\nabla+\sigma^- \nabla\Delta&\nu_{1}^{-}
\Delta+\nu_{2}^{-} \nabla \otimes \nabla
\end{pmatrix}.
\end{equation}
Applying Fourier transform to the system \eqref{2.4}, one has
\begin{equation}\label{2.5}
\begin{cases}
\widehat {{U}}_t=\mathcal B(\xi)\widehat {U},\\
\widehat {U}\big|_{t=0}=\widehat U_0=(\widehat {n^+_0}, \widehat{ \mathbf{m}^+_0},
\widehat {n^-_0}, \widehat {\mathbf{m}^-_0}),
\end{cases}
\end{equation}
where $\widehat
{U}(\xi,t)=\mathcal{F}(U(x,t))$,
$\xi=(\xi_1,\xi_2,\xi_3)^t$ and $\mathcal B(\xi)$ is defined by
\begin{equation*}\label{2.6}
\mathcal B(\xi)=\begin{pmatrix}
0&-{\rm i}\xi^t&0&0\\
-\mathrm{i}\beta_1\xi-{\rm i}\sigma^+|\xi|^2\xi&-\nu_1^+|\xi|^2\mathbb{I}_{3\times 3}-\nu_2^+\xi\otimes\xi&-{\rm i}\beta_2\xi&0\\
0&0&0&-{\rm i}\xi^t\\
- {\rm i}\beta_3\xi&0&-
{\rm i}\beta_4-{\rm i}\sigma^-|\xi|^2\xi&-\nu_1^-|\xi|^2\mathbb{I}_{3\times
3}-\nu_2^-\xi\otimes\xi
\end{pmatrix}.
\end{equation*}
To facilitate narrative in later use, we also use the definition of Green's function $G(x,t)$  with the following standard form as our previous works:
\begin{equation}\label{2.6(1)}
\left\{\begin{array}{l}
G_{t}=\mathcal{A} G, \\
G|_{t=0}=\delta_0(x)\mathbb{I}_{8\times8}.
\end{array}\right.
\end{equation}

To derive the linear estimates,  by using a real method
as in \cite{ld}, one need to make a detailed
analysis on the properties of Green's function. Since the system \eqref{2.12} has eight equations and
the matrix $\mathcal B(\xi)$ may not be diagonalizable. In order to facilitate the analysis, we
take Hodge decomposition to system \eqref{2.4} such that it can be
decoupled into three systems. One has four equations, and the other
two are classic heat equations.

To begin with, let $\varphi^{\pm}=\Lambda^{-1}{\rm
div}\mathbf{m}^{\pm}$ be the ``compressible part" of the momenta
$\mathbf{m}^{\pm}$, and denote $\phi^{\pm}=\Lambda^{-1}{\rm
curl}\mathbf{m}^{\pm}$ (with $({\rm curl} z)_i^j
=\partial_{x_j}z^i-\partial_{x_i}z^j$) by the ``incompressible part"
of the momenta $\mathbf{m}^{\pm}$. Then, we can divide the system
\eqref{2.4} into the compressible part
\begin{equation}\label{2.7}
\begin{cases}
\partial_t{n^+}+\Lambda{\varphi^+}=0,\\
\partial_t{\varphi^+}-\beta_1\Lambda{n^+}-\beta_2\Lambda{n^-}+\nu^+\Lambda^2{\varphi^+}-\sigma^+\Lambda^3{n^+}=0,\\
\partial_t{n^-}+\Lambda{\varphi^-}=0,\\
\partial_t{\varphi^-}-\beta_3\Lambda{n^+}-\beta_4\Lambda{n^-}+\nu^-\Lambda^2{\varphi^-}-\sigma^-\Lambda^3{n^-}=0,\\
(n^+, \varphi^+, n^-, \varphi^-)\big|_{t=0}=({n}^+_0, \Lambda^{-1}{\rm div}{\mathbf{m}}^{+}_0, {n}^-_0, \Lambda^{-1}{\rm div}{\mathbf{m}}^{-}_0)(x),\\
\end{cases}
\end{equation}
with its Green's function $\tilde{G}(x,t)$,
and the incompressible part
\begin{equation}\label{2.8}
\begin{cases}
\partial_t\phi^++\nu^+_1\Lambda^2\phi^+=0,\\
\partial_t\phi^-+\nu^-_1\Lambda^2\phi^-=0,\\
(\phi^+,\phi^-)\big|_{t=0}=(\Lambda^{-1}{\rm curl}{\mathbf{m}}^{+}_0,
\Lambda^{-1}{\rm curl}{\mathbf{m}}^{-}_0)(x)
\end{cases}
\end{equation}
where $\nu^{\pm}=\nu^{\pm}_1+\nu^{\pm}_2$. It is obvious that the incompressible part are two heat kernels: 
\begin{eqnarray}\label{2.8(1)}
\hat{\phi}^+=e^{-\nu_1^+|\xi|^2t},\ \hat{\phi}^-=e^{-\nu_1^-|\xi|^2t},
\end{eqnarray}
and hence, we mainly focus on the compressible part of Green's function.

\subsection{Compressible part}

 In view of the semigroup
theory, we may represent the IVP \eqref{2.4} for $\mathcal
U=(n^+, \varphi^+, n^-, \varphi^-)^t$ as
\begin{equation}\label{2.9}
\begin{cases}
\mathcal U_t=\mathcal A\mathcal U,\\
\mathcal U\big|_{t=0}=\mathcal U_0,
\end{cases}
\end{equation}
where the operator $\mathcal A$ is defined by
\begin{equation}\nonumber\mathcal A=\begin{pmatrix}
0&-\Lambda&0&0\\
\beta_1\Lambda+\sigma^+\Lambda^3&-\nu^+\Lambda^2&\beta_2\Lambda&0\\
0&0&0&-\Lambda\\
\beta_3\Lambda&0&\beta_4\Lambda+\sigma^-\Lambda^3&-\nu^-\Lambda^2
\end{pmatrix}.
\end{equation}
Taking Fourier transform to system \eqref{2.9}, we obtain
\begin{equation}\label{2.10}
\begin{cases}
\widehat {\mathcal U}_t=\mathcal A(\xi)\widehat {\mathcal U},\\
\widehat {\mathcal U}\ \big|_{t=0}=\widehat {\mathcal U}_0,
\end{cases}   
\end{equation}
where $\widehat {\mathcal U}(\xi,t)=\mathfrak{F}({\mathcal U}(x,t))$
and $\mathcal A(\xi)$ is given by
\begin{equation}\label{2.11}
\mathcal A(\xi)=\begin{pmatrix}
0&-|\xi|&0&0\\
\beta_1|\xi|+\sigma^+|\xi|^3&-\nu^+|\xi|^2&\beta_2|\xi|&0\\
0&0&0&-|\xi|\\
\beta_3|\xi|&0&\beta_4|\xi|+\sigma^-|\xi|^3&-\nu^-|\xi|^2
\end{pmatrix}.
\end{equation}
Its eigenvalues satisfy
\begin{equation}\label{2.12}
\begin{split}&{\rm det}(\lambda{\rm I}-\mathcal A(\xi))\\
&=\lambda^4+(\nu^+|\xi|^2+\nu^-|\xi|^2)\lambda^3+(\beta_1|\xi|^2+\beta_4|\xi|^2+\sigma^+|\xi|^4+\sigma^-|\xi|^4
+\nu^+\nu^-|\xi|^4)\lambda^2\\&\quad+(\beta_1\nu^-|\xi|^4+\beta_4\nu^+|\xi|^4+\nu^+\sigma^-|\xi|^6+\nu^-\sigma^+|\xi|^6)\lambda+\beta_1\sigma^-|\xi|^6+\beta_4\sigma^+|\xi|^6+\sigma^+\sigma^-|\xi|^8\\
&=0.
\end{split}
\end{equation}

\vspace{3.8mm}
\textbf{\textit{Low frequency part.}}\vspace{2mm}

From discriminant of the quartic equation in one unknown, we know that when $|\xi|\ll1$ there exist two real roots and a pair of conjugate imaginary roots. Then, by using the method of undetermined coefficients, we have the following for the spectral in the low frequency part:
\begin{lemma}\label{l 2.1} When $|\xi| \leq \eta_{1}\ll1$ with a constant $\eta_1$, the spectrum has the Taylor series expansion:
\begin{equation}\label{2.13}
\begin{cases}\lambda_1=-\frac{\beta_1\nu^++\beta_4\nu^-}{2(\beta_1+\beta_4)}|\xi|^2+\mathrm{i}\sqrt{\beta_1+\beta_4}|\xi|+\mathcal O(|\xi|^3),\\
\lambda_2=-\frac{\beta_1\nu^++\beta_4\nu^-}{2(\beta_1+\beta_4)}|\xi|^2-\mathrm{i}\sqrt{\beta_1+\beta_4}|\xi|+\mathcal O(|\xi|^3),\\
\lambda_3=\frac{-(\beta_1\nu^-+\beta_4\nu^+)+\sqrt{(\beta_1\nu^-+\beta_4\nu^+)^2-4(\beta_1+\beta_4)(\beta_1\sigma^-+\beta_4\sigma^+)}}{2(\beta_1+\beta_4)}|\xi|^2+\mathcal O(|\xi|^4),\\
\lambda_4=\frac{-(\beta_1\nu^-+\beta_4\nu^+)-\sqrt{(\beta_1\nu^-+\beta_4\nu^+)^2-4(\beta_1+\beta_4)(\beta_1\sigma^-+\beta_4\sigma^+)}}{2(\beta_1+\beta_4)}|\xi|^2+\mathcal O(|\xi|^4).
\end{cases}
\end{equation}
Here ${\rm c}:=\sqrt{\beta_1+\beta_4}$ is the propagation speed of the Huygens wave.
\end{lemma}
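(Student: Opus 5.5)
We work directly with the characteristic polynomial \eqref{2.12}. Put $r=|\xi|$ and look for roots in two different scalings: $\lambda=r\,\mu$ for the two acoustic roots, and $\lambda=r^2\kappa$ for the two diffusive roots. In each scaling we reduce \eqref{2.12} to a polynomial identity in $(\mu,r)$ or $(\kappa,r)$, find simple roots at $r=0$, and apply the implicit function theorem. This gives roots analytic in $r$ near $0$, and their Taylor coefficients can then be obtained by undetermined coefficients. Since \eqref{2.12} has real coefficients, the roots come in conjugate pairs, and this fixes the real/imaginary structure.

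\textbf{Acoustic roots.} Substitute $\lambda=r\mu$ into \eqref{2.12} and divide by $r^4$. This gives
\[
\mu^4+(\beta_1+\beta_4)\mu^2 + r\big[(\nu^++\nu^-)\mu^3+(\beta_1\nu^-+\beta_4\nu^+)\mu\big]+\mathcal O(r^2)=0.
\]
At $r=0$ the roots are $\mu=0$ (double) and $\mu=\pm\mathrm i\sqrt{\beta_1+\beta_4}$, and the latter two are simple. The implicit function theorem therefore gives two analytic branches $\mu_\pm(r)=\pm\mathrm i\,{\rm c}+\mu_1^\pm r+\mathcal O(r^2)$.

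To find $\mu_1$, differentiate in $r$ at $r=0$. Write $f(\mu)=\mu^4+{\rm c}^2\mu^2$ and $g(\mu)=(\nu^++\nu^-)\mu^3+(\beta_1\nu^-+\beta_4\nu^+)\mu$. Then $\mu_1=-g(\mu_0)/f'(\mu_0)$. At $\mu_0=\mathrm i{\rm c}$ we have:
\begin{itemize}
\item $f'(\mu_0)=4\mu_0^3+2{\rm c}^2\mu_0=-2{\rm c}^2\mu_0$;
\item $g(\mu_0)=\mu_0\big(-(\nu^++\nu^-){\rm c}^2+\beta_1\nu^-+\beta_4\nu^+\big)=-\mu_0(\beta_1\nu^++\beta_4\nu^-)$.
\end{itemize}
Hence
\[
\mu_1=-\frac{\beta_1\nu^++\beta_4\nu^-}{2(\beta_1+\beta_4)},
\]
which is real. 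This yields $\lambda_{1,2}$ as stated. The remainder is $\mathcal O(r^3)$ because $\lambda=r\mu(r)$ and the $\mu$-expansion has an $\mathcal O(r^2)$ tail.

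\textbf{Diffusive roots.} Substitute $\lambda=r^2\kappa$ and divide by $r^6$. The leading equation is
\[
(\beta_1+\beta_4)\kappa^2+(\beta_1\nu^-+\beta_4\nu^+)\kappa+(\beta_1\sigma^-+\beta_4\sigma^+)+\mathcal O(r^2)=0.
\]
Note that the correction is $\mathcal O(r^2)$, not $\mathcal O(r)$: after this substitution every term of \eqref{2.12} is a polynomial in $r^2$. The quadratic formula gives the leading coefficients of $\lambda_{3,4}$ exactly as stated.

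We then apply the implicit function theorem in the variable $s=r^2$. This requires the two roots to be simple, i.e. a nonzero discriminant
\[
D=(\beta_1\nu^-+\beta_4\nu^+)^2-4(\beta_1+\beta_4)(\beta_1\sigma^-+\beta_4\sigma^+)\neq0.
\]
Given that, $\kappa_{3,4}(r)=\kappa_{3,4}(0)+\mathcal O(r^2)$, so $\lambda_{3,4}=\kappa_{3,4}(0)r^2+\mathcal O(r^4)$.

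The claim that $\lambda_{3,4}$ are real relies on $D>0$. This is the case implicitly assumed by the statement, consistent with the preceding remark that there are two real roots. If $D<0$, the same expansions still hold, but $\lambda_{3,4}$ become complex conjugates with negative real part. If $D=0$, a separate degenerate analysis would be needed.

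\textbf{Completeness and main obstacle.} The four branches are distinct for small $r>0$: two have modulus of order $r$, and two have modulus of order $r^2$ with distinct leading coefficients. Since \eqref{2.12} has degree four, these are all the roots. The only real obstacle is the diffusive pair. There one must check that the $r^2$-scaling yields a regular perturbation in $s=r^2$, and that $D\ne0$ so that the roots are simple. Once that is settled, everything else is routine implicit-function/Taylor bookkeeping, and the identification ${\rm c}=\sqrt{\beta_1+\beta_4}$ is read off from the imaginary part of $\lambda_{1,2}$.
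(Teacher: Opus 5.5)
Your proof is correct and follows the paper's route, made rigorous: the paper appeals to the quartic's discriminant for the root structure and then uses undetermined coefficients, which is what your rescalings $\lambda=|\xi|\mu$, $\lambda=|\xi|^2\kappa$ combined with the implicit function theorem accomplish, and your $\mu_1$ and limiting quadratic for $\kappa$ match the stated expansions. Your caveats on $D$ are sharper than the paper's text: $\lambda_{3,4}$ are real only when $D>0$ (the paper's ``two real roots'' is not automatic), and at $D=0$ the double root generically splits at order $|\xi|$ in $\kappa$, i.e.\ at order $|\xi|^3$ in $\lambda$, so the $\mathcal O(|\xi|^4)$ remainder genuinely needs $D\neq0$.
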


We divide the issue into two cases. Since it is basically similar to \cite{Liy}, we just give the sketch.

 {\color{blue}Case 1: $\lambda_3\neq \lambda_4$.}
In this case, we know the four eigenvalues are different, then one can immediately get 
the semigroup $\mathrm{e}^{t \mathcal{A}}$ (denoted by Green's function $\tilde{G}(x,t)$) can be decomposed into
\begin{equation}\label{2.11}
\tilde{G}(x,t):=\mathrm{e}^{t \mathcal{A}(\xi)}=\sum_{i=1}^{4} \mathrm{e}^{\lambda_{i} t} P^{i}(\xi),\ \ |\xi|\ll1,
\end{equation}
and the projector  $P^{i}(\xi)=\prod_{j \neq i} \frac{\mathcal{A}(\xi)-\lambda_{j} \mathbb{I}}{\lambda_{i}-\lambda_{j}}, \quad i, j=1,2,3,4$. 

A direct and tedious computation gives that
\begin{align}\label{2.14}
P^1(\xi)=\bar{P}^2(\xi)=\begin{pmatrix}
\frac{\beta_1}{2(\beta_1+\beta_4)}&\frac{\beta_1}{2(\beta_1+\beta_4)^\frac{3}{2}}\mathrm{i}&\frac{\beta_2}{2(\beta_1+\beta_4)}
&\frac{\beta_2}{2(\beta_1+\beta_4)^\frac{3}{2}}\mathrm{i}\\
-\frac{\beta_1}{2(\beta_1+\beta_4)^\frac{1}{2}}\mathrm{i}&\frac{\beta_1}{2(\beta_1+\beta_4)}&-\frac{\beta_2}{2(\beta_1+\beta_4)^\frac{1}{2}}\mathrm{i}&
\frac{\beta_2}{2(\beta_1+\beta_4)}\\
\frac{\beta_2}{2(\beta_1+\beta_4)}&\frac{\beta_2}{2(\beta_1+\beta_4)^\frac{3}{2}}\mathrm{i}&\frac{\beta_4}{2(\beta_1+\beta_4)}
&\frac{\beta_4}{2(\beta_1+\beta_4)^\frac{3}{2}}\mathrm{i}\\
-\frac{\beta_2}{2(\beta_1+\beta_4)^\frac{1}{2}}\mathrm{i}&\frac{\beta_2}{2(\beta_1+\beta_4)}&-\frac{\beta_4}{2(\beta_1+\beta_4)^\frac{1}{2}}\mathrm{i}&
\frac{\beta_4}{2(\beta_1+\beta_4)}
\end{pmatrix}+\mathcal O(|\xi|),
\end{align}
\begin{align}\label{2.15}
P^3(\xi)=&\begin{pmatrix}\!
\frac{\beta_1\beta_4(\nu^+-\nu^-)}{(\beta_1+\beta_4)\mathcal
R}-\frac{\beta_4\tilde \lambda_4}{\mathcal R}&-\frac{\beta_4}{\mathcal
R|\xi|}&\frac{\beta_2\beta_4(\nu^+-\nu^-)}{(\beta_1+\beta_4)\mathcal
R}+\frac{\beta_2\tilde \lambda_4}{\mathcal R}
&\frac{\beta_2}{\mathcal R|\xi|}\\
0&-\frac{\beta_4(\beta_4\nu^++\beta_1\nu^-)}{(\beta_1+\beta_4)\mathcal
R}-\frac{\beta_4\tilde \lambda_4}{\mathcal R}&0&
\frac{\beta_2(\beta_4\nu^++\beta_1\nu^-)}{(\beta_1+\beta_4)\mathcal R}+\frac{\beta_2\tilde \lambda_4}{\mathcal R}\\
\frac{\beta_1\beta_2(\nu^--\nu^+)}{(\beta_1+\beta_4)\mathcal
R}+\frac{\beta_2\tilde \lambda_4}{\mathcal
R}&\frac{\beta_2}{\mathcal
R|\xi|}&\frac{\beta_1\beta_4(\nu^--\nu^+)}{(\beta_1+\beta_4)\mathcal
R}-\frac{\beta_1\tilde \lambda_4}{\mathcal R}
&-\frac{\beta_1}{\mathcal R|\xi|}\\
0&\frac{\beta_2(\beta_4\nu^++\beta_1\nu^-)}{(\beta_1+\beta_4)\mathcal
R}+\frac{\beta_2\tilde \lambda_4}{\mathcal R}&0&
-\frac{\beta_1(\beta_4\nu^++\beta_1\nu^-)}{(\beta_1+\beta_4)\mathcal
R}-\frac{\beta_1\tilde \lambda_4}{\mathcal R}
\!\end{pmatrix}\nonumber\\
&+\mathcal O(|\xi|),
\end{align}
\begin{align}\label{2.16}
P^4(\xi)=&\!-\!\!\begin{pmatrix}\!\!
\frac{\beta_1\beta_4(\nu^+-\nu^-)}{(\beta_1+\beta_4)\mathcal
R}-\frac{\beta_4\tilde \lambda_3}{\mathcal
R}\! &\! -\frac{\beta_4}{\mathcal
R|\xi|} & \frac{\beta_2\beta_4(\nu^+-\nu^-)}{(\beta_1+\beta_4)\mathcal
R}+\frac{\beta_2\tilde \lambda_3}{\mathcal R}
 & \frac{\beta_2}{\mathcal R|\xi|}\\
0 \!&\! -\frac{\beta_4(\beta_4\nu^++\beta_1\nu^-)}{(\beta_1+\beta_4)\mathcal
R}-\frac{\beta_4\tilde \lambda_3}{\mathcal R} & 0 &
\frac{\beta_2(\beta_4\nu^++\beta_1\nu^-)}{(\beta_1+\beta_4)\mathcal R}+\frac{\beta_2\tilde \lambda_3}{\mathcal R}\\
\frac{\beta_1\beta_2(\nu^--\nu^+)}{(\beta_1+\beta_4)\mathcal
R}+\frac{\beta_2\tilde \lambda_3}{\mathcal
R} \!&\! \frac{\beta_2}{\mathcal
R|\xi|}&\frac{\beta_1\beta_4(\nu^--\nu^+)}{(\beta_1+\beta_4)\mathcal
R}-\frac{\beta_1\tilde \lambda_3}{\mathcal R}
&-\frac{\beta_1}{\mathcal R|\xi|}\\
0 \!&\! \frac{\beta_2(\beta_4\nu^++\beta_1\nu^-)}{(\beta_1+\beta_4)\mathcal
R}+\frac{\beta_2\tilde \lambda_3}{\mathcal R}&0&
-\frac{\beta_1(\beta_4\nu^++\beta_1\nu^-)}{(\beta_1+\beta_4)\mathcal
R}-\frac{\beta_1\tilde \lambda_3}{\mathcal R}
\!\!\end{pmatrix}\nonumber\\
&+\mathcal O(|\xi|).
\end{align}
Here $\mathcal{R}\!=\!\sqrt{(\beta_1\nu^-\!+\!\beta_4\nu^+)^2\!-\!4(\beta_1\!+\!\beta_4)(\beta_1\sigma^-\!+\!\beta_4\sigma^+)}$, $\tilde{\lambda}_4=\frac{-(\beta_1\nu^-+\beta_4\nu^+)+\mathcal{R}}{2(\beta_1+\beta_4)}$ and $\tilde{\lambda}_4=\frac{-(\beta_1\nu^-+\beta_4\nu^+)-\mathcal{R}}{2(\beta_1+\beta_4)}$.

{\color{blue}Case 2: $\lambda_3=\lambda_4$.} In this case,
 we have the following decomposition:
\begin{equation}\begin{split}\label{2.30}\widehat {\mathcal U}(\xi, t)=&P\begin{pmatrix}
\text{e}^{\lambda_1t}&0&0&0\\
0&\text{e}^{\lambda_2t}&0&0\\
0&0&\text{e}^{\lambda_3t}&t\text{e}^{\lambda_3t}\\
0&0&0&\text{e}^{\lambda_3t}
\end{pmatrix}P^{-1}\widehat {\mathcal
U}_0(\xi)\\=&\left(\text{e}^{\lambda_1t}P^1(\xi)+\text{e}^{\lambda_2t}P^2(\xi)+\text{e}^{\lambda_3t}P^3(\xi)+t\text{e}^{\lambda_3t}
P^4(\xi)\right)\widehat {\mathcal U}_0(\xi).
\end{split}\end{equation}
Then, after a direct and complicated computation, we can get the
explicit expressions of $P^i(\xi)$:
\begin{equation}\begin{split}\label{2.31}P^1(\xi)=\bar{P}^2(\xi)
=&\begin{pmatrix}
\frac{\beta_1}{2(\beta_1+\beta_4)}&\frac{\beta_1}{2(\beta_1+\beta_4)^\frac{3}{2}}\mathrm{i}&\frac{\beta_2}{2(\beta_1+\beta_4)}
&\frac{\beta_2}{2(\beta_1+\beta_4)^\frac{3}{2}}\mathrm{i}\\
-\frac{\beta_1}{2(\beta_1+\beta_4)^\frac{1}{2}}\mathrm{i}&\frac{\beta_1}{2(\beta_1+\beta_4)}&-\frac{\beta_2}{2(\beta_1+\beta_4)^\frac{1}{2}}\mathrm{i}&
\frac{\beta_2}{2(\beta_1+\beta_4)}\\
\frac{\beta_2}{2(\beta_1+\beta_4)}&\frac{\beta_2}{2(\beta_1+\beta_4)^\frac{3}{2}}\mathrm{i}&\frac{\beta_4}{2(\beta_1+\beta_4)}
&\frac{\beta_4}{2(\beta_1+\beta_4)^\frac{3}{2}}\mathrm{i}\\
-\frac{\beta_2}{2(\beta_1+\beta_4)^\frac{1}{2}}\mathrm{i}&\frac{\beta_2}{2(\beta_1+\beta_4)}&-\frac{\beta_4}{2(\beta_1+\beta_4)^\frac{1}{2}}\mathrm{i}&
\frac{\beta_4}{2(\beta_1+\beta_4)}
\end{pmatrix}+\mathcal O(|\xi|),
\end{split}\end{equation}
%
\begin{align}\label{2.34}P^3(\xi)=&\begin{pmatrix}
\frac{\beta_4}{\beta_1+\beta_4}&0&-\frac{\beta_2}{\beta_1+\beta_4}
&0\\
0&\frac{\beta_4}{\beta_1+\beta_4}&0&
-\frac{\beta_2}{\beta_1+\beta_4}\\
-\frac{\beta_2}{\beta_1+\beta_4}&0 &\frac{\beta_1}{\beta_1+\beta_4}
&0\\
0&-\frac{\beta_2}{\beta_1+\beta_4}&0&
\frac{\beta_1}{\beta_1+\beta_4}
\end{pmatrix}+\mathcal O(|\xi|),
\end{align}
\begin{align}\label{2.33}
P^4(\xi)=&\begin{pmatrix}
\frac{\beta_1\beta_4(\nu^+-\nu^-)}{2(\beta_1+\beta_4)^2}+\frac{\beta_4\nu^+}{2(\beta_1+\beta_4)}&-\frac{\beta_4}{(\beta_1+\beta_4)|\xi|}&\frac{\beta_2\beta_4(\nu^+-\nu^-)}{2(\beta_1+\beta_4)^2}
-\frac{\beta_2\nu^-}{2(\beta_1+\beta_4)}
&\frac{\beta_2}{(\beta_1+\beta_4)|\xi|}\\
0&-\frac{\beta_4(\beta_4\nu^++\beta_1\nu^-)}{2(\beta_1+\beta_4)^2}&0&
\frac{\beta_2(\beta_4\nu^++\beta_1\nu^-)}{2(\beta_1+\beta_4)^2}\\
\frac{\beta_1\beta_2(\nu^--\nu^+)}{2(\beta_1+\beta_4)^2}-\frac{\beta_2\nu^+}{2(\beta_1+\beta_4)}&\frac{\beta_2}{(\beta_1+\beta_4)|\xi|}
&\frac{\beta_1\beta_4(\nu^--\nu^+)}{2(\beta_1+\beta_4)^2}+\frac{\beta_1\nu^-}{2(\beta_1+\beta_4)}
&-\frac{\beta_1}{(\beta_1+\beta_4)|\xi|}\\
0&\frac{\beta_2(\beta_4\nu^++\beta_1\nu^-)}{2(\beta_1+\beta_4)^2}&0&
-\frac{\beta_1(\beta_4\nu^++\beta_1\nu^-)}{2(\beta_1+\beta_4)^2}
\end{pmatrix}|\xi|^2\nonumber\\
&\quad+\mathcal O(|\xi|^3).
\end{align}

Then one can see that the two cases above are almost the same in the sense of deriving the space-time behavior of Green's function $\tilde{G}(x,t)$, and hence, we only focus on the first case in the following without loss of generality.

\vspace{2.5mm}
{\bf High frequency part.}
 
 From the  characteristic equation (\ref{2.12}), we have the following expansion of spectra in the high frequency.
\begin{lemma}\label{l 2.2} There exists a positive constant  $K \gg 1$  such that, for  $|\xi| \geq K$, the spectrum has the following Taylor series expansion:
\begin{equation}\label{2.35}
\begin{cases}\lambda_{1,2}=-\frac{\gamma^+\pm\sqrt{(\gamma^+)^2-4\sigma^+}}{2}|\xi|^2+\mathcal O(1),\\[2mm]
\lambda_{3,4}=-\frac{\gamma^-\pm\sqrt{(\gamma^-)^2-4\sigma^-}}{2}|\xi|^2+\mathcal O(1).
\end{cases}
\end{equation}
\end{lemma}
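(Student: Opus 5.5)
The plan is a scaling argument that reduces the quartic \eqref{2.12} to a regular perturbation of a polynomial which factors into the two single-phase quadratics. Here I read $\gamma^\pm$ as $\nu^\pm=\nu_1^\pm+\nu_2^\pm$, the coefficient of $\Lambda^2\varphi^\pm$ in \eqref{2.7}.

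\textbf{Step 1: rescale.} For $|\xi|\geq K$, set $\lambda=|\xi|^2\mu$ and $\varepsilon=|\xi|^{-2}\in(0,K^{-2}]$. Dividing \eqref{2.12} by $|\xi|^8$ gives $p(\mu,\varepsilon):=p_0(\mu)+\varepsilon\, p_1(\mu)=0$, where
\begin{equation*}
p_0(\mu)=\mu^4+(\nu^++\nu^-)\mu^3+(\sigma^++\sigma^-+\nu^+\nu^-)\mu^2+(\nu^+\sigma^-+\nu^-\sigma^+)\mu+\sigma^+\sigma^-,
\end{equation*}
and
\begin{equation*}
p_1(\mu)=(\beta_1+\beta_4)\mu^2+(\beta_1\nu^-+\beta_4\nu^+)\mu+\beta_1\sigma^-+\beta_4\sigma^+ .
\end{equation*}
Every term of \eqref{2.12} carrying a $\beta_i$ is exactly two powers of $|\xi|$ below the top order, so no other terms appear.

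\textbf{Step 2: factor the leading part.} Expanding the product shows
\begin{equation*}
p_0(\mu)=(\mu^2+\nu^+\mu+\sigma^+)(\mu^2+\nu^-\mu+\sigma^-).
\end{equation*}
Its four roots are $\mu^0_{1,2}=-\frac{\nu^+\pm\sqrt{(\nu^+)^2-4\sigma^+}}{2}$ and $\mu^0_{3,4}=-\frac{\nu^-\pm\sqrt{(\nu^-)^2-4\sigma^-}}{2}$. These are exactly the leading coefficients claimed in \eqref{2.35}. All of them have negative real part, since $\nu^\pm,\sigma^\pm>0$.

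\textbf{Step 3: perturb in the generic case.} Assume the four roots $\mu^0_j$ are simple, i.e. $(\nu^\pm)^2\neq4\sigma^\pm$ and the two quadratic factors share no root. Then $\partial_\mu p_0(\mu^0_j)\neq0$. The implicit function theorem, applied to the polynomial $p(\mu,\varepsilon)$ which is analytic in $(\mu,\varepsilon)$, gives analytic branches $\mu_j(\varepsilon)=\mu_j^0+\mathcal O(\varepsilon)$ for $\varepsilon$ small. Fixing $K$ large enough that $K^{-2}$ lies in this neighborhood and undoing the scaling gives
\begin{equation*}
\lambda_j=|\xi|^2\mu_j^0+\mathcal O(|\xi|^2\varepsilon)=\mu_j^0|\xi|^2+\mathcal O(1),
\end{equation*}
which is \eqref{2.35}. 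An equivalent route is to compute the first correction by undetermined coefficients, $\mu_j=\mu_j^0-\varepsilon\,p_1(\mu_j^0)/p_0'(\mu_j^0)+\mathcal O(\varepsilon^2)$. This makes the $\mathcal O(1)$ term explicit and keeps the style of Lemma \ref{l 2.1}.

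\textbf{Step 4: the degenerate case, which is the main obstacle.} Suppose a root of $p_0$ is repeated, either because $(\nu^\pm)^2=4\sigma^\pm$ or because the two phases share a root. Then Puiseux expansions only give $\mu=\mu^0+\mathcal O(\varepsilon^{1/2})$, i.e. an $\mathcal O(|\xi|)$ remainder in $\lambda$. To recover $\mathcal O(1)$, I would first check whether $p_1$ has the right structure at the double root. If $\mu^0$ is a root of multiplicity $m$, Newton-polygon analysis of $p_0(\mu^0+\delta)+\varepsilon p_1(\mu^0+\delta)$ gives $\delta=\mathcal O(\varepsilon^{1/m})$ when $p_1(\mu^0)\neq0$. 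So I would either show that such degeneracies force $p_1(\mu^0)=0$, or else accept the weaker remainder $\mathcal O(|\xi|)$. The weaker remainder is still harmless for the high-frequency Green's function bounds, because $\mathrm{Re}\,\mu^0_j<0$ still yields $|e^{\lambda_j t}|\lesssim e^{-c|\xi|^2t}$. In that case I would handle the repeated root with a Jordan-block decomposition, as in Case 2 of the low-frequency analysis.
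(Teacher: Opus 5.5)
Your Steps 1--3 are correct, and they are the argument the paper leaves implicit. The paper gives no proof beyond reading the expansion off \eqref{2.12}, with $\gamma^\pm=\nu^\pm$ as you assumed. Your rescaling, the factorization $p_0=q_+q_-$ with $q_\pm(\mu)=\mu^2+\nu^\pm\mu+\sigma^\pm$, and the implicit function theorem at simple roots settle the generic case rigorously.

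\textbf{Step 4: the degenerate cases.} You leave this step undecided, but one observation settles it. Your perturbation is exactly $p_1=\beta_1q_-+\beta_4q_+$. This reflects $\det(\lambda I-\mathcal A(\xi))=D_+D_--\beta_2\beta_3|\xi|^4$, where $D_+=\lambda^2+\nu^+|\xi|^2\lambda+\sigma^+|\xi|^4+\beta_1|\xi|^2$, $D_-$ is the analogue with $\beta_4$, and $\beta_1\beta_4=\beta_2\beta_3$. The two kinds of degeneracy then behave differently.

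\emph{Shared root.} Suppose the only degeneracy is that $q_+$ and $q_-$ share a root $\mu^0$ that is simple for each. Then $p(\mu^0,\varepsilon)=0$ for every $\varepsilon$. The companion branch is $\mu^0+\mathcal O(\varepsilon)$, by the implicit function theorem applied to $p/(\mu-\mu^0)$. So the $\mathcal O(1)$ remainder survives.

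\emph{Critical case.} Take $(\nu^+)^2=4\sigma^+$, $\mu^0=-\nu^+/2$, with $q_-(\mu^0)\neq0$. Then $p_1(\mu^0)=\beta_1q_-(\mu^0)\neq0$, so your first option (forcing $p_1(\mu^0)=0$) is impossible. Write $\lambda=\mu^0|\xi|^2+w$. Then $D_+=w^2+\beta_1|\xi|^2$ and $D_-=q_-(\mu^0)|\xi|^4+\mathcal O(|\xi|^3)$. Hence $w^2+\beta_1|\xi|^2=\beta_2\beta_3|\xi|^4/D_-=\mathcal O(1)$, and
\[
\lambda_{1,2}=-\tfrac{\nu^+}{2}|\xi|^2\pm \mathrm{i}\sqrt{\beta_1}\,|\xi|+\mathcal O(|\xi|^{-1}).
\]

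\textbf{Consequence for the lemma.} The lemma as stated, with an $\mathcal O(1)$ remainder, is false in the critical case. The paper's remark asserts that this case "can be treated similar to the general case", but that is not so. Your weaker $\mathcal O(|\xi|)$ remainder is the correct statement there, so the lemma needs the hypothesis $(\nu^\pm)^2\neq4\sigma^\pm$ or a weakened remainder. Since the order-$|\xi|$ correction is purely imaginary, $\mathrm{Re}\,\lambda_{1,2}=-\frac{\nu^+}{2}|\xi|^2+\mathcal O(|\xi|^{-1})$, which is all the decay bounds actually use.

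\textbf{No Jordan block.} These two roots are distinct for $|\xi|\geq K$. This contradicts both your last sentence and the paper's claim that the critical case produces multiple roots. What really changes is that $\lambda_1-\lambda_2\sim|\xi|$ instead of $|\xi|^2$. As a result, $P^1$ and $P^2$ individually exceed the sizes in \eqref{2.36}. The right fix is to treat $\mathrm{e}^{\lambda_1t}P^1+\mathrm{e}^{\lambda_2t}P^2$ together, through the divided difference $(\mathrm{e}^{\lambda_1t}-\mathrm{e}^{\lambda_2t})/(\lambda_1-\lambda_2)$.
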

\begin{remark}It is obvious that the real part of the spectra are always positive whenever $(\gamma^\pm)^2-4\sigma^\pm\geq0$ or $(\gamma^\pm)^2-4\sigma^\pm<0$. For critical case $(\gamma^+)^2-4\sigma^+=0$ or $(\gamma^-)^2-4\sigma^-=0$, there are multiple roots of the  characteristic equation, which corresponds to the Case 2: $\lambda_3=\lambda_4$ in the low frequency and can be treated similar to the general case and we omit the details for this case.
\end{remark}

After a direct and tedious computation, we can get the following expansion of the projection $P^i$ with $i=1,2,3,4$ in the high frequency according to Lemma \ref{l 2.2}:
\begin{align}\label{2.36}
P^{i}(\xi)=&\begin{pmatrix}
\mathcal{O}(1)&\mathcal{O}(|\xi|^{-1})&\mathcal{O}(|\xi|^{-2})&\mathcal{O}(|\xi|^{-3})\\
\mathcal{O}(|\xi|)&\mathcal{O}(1)&\mathcal{O}(|\xi|^{-1})&\mathcal{O}(|\xi|^{-2})\\
\mathcal{O}(|\xi|^{-2})&\mathcal{O}(|\xi|^{-3})&\mathcal{O}(1)&\mathcal{O}(|\xi|^{-1})\\
\mathcal{O}(|\xi|^{-1})&\mathcal{O}(|\xi|^{-2})&\mathcal{O}(|\xi|)&\mathcal{O}(1)
\end{pmatrix}+\cdots,
\end{align}
which together with (\ref{2.11}) and Lemma \ref{l 2.2}) gives that
\begin{lemma}\label{l 2.2(1)}
For any $|\alpha|\geq0$ and any integer $N>0$, the high frequency $\tilde{G}^h(x,t)$ also satisfies
\begin{equation}\label{2.37}
\begin{array}{rl}
&\displaystyle |\partial_x^\alpha(\tilde{G}_{ij}^h(x,t))|\lesssim e^{-t/C}t^{-\frac{3+|\alpha|}{2}}\Big(1+\frac{|x|^2}{t}\Big)^{-N},\ \ \ (i,j)\neq(2,1), (4,3),\\[2mm]
&\displaystyle |\partial_x^\alpha(\tilde{G}_{21}^h(x,t),\tilde{G}_{43}^h(x,t))|\lesssim e^{-t/C}t^{-\frac{4+|\alpha|}{2}}\Big(1+\frac{|x|^2}{t}\Big)^{-N}.
\end{array}
\end{equation}
\end{lemma}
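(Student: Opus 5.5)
We prove Lemma \ref{l 2.2(1)} in the standard way for the high-frequency part of Green's function (cf. \cite{Ls,Lw,ld}): the exponential decay comes from the spectrum, and the Gaussian-type spatial decay comes from smoothness of the symbol via repeated integration by parts. We work with
\[
\hat{\tilde G}^h(\xi,t)=\chi_K(\xi)\sum_{i=1}^4 e^{\lambda_i(\xi)t}P^i(\xi),
\]
where $\chi_K$ is a smooth cutoff supported in $|\xi|\geq K$. In the intermediate region $\eta_1\le|\xi|\le K$, compactness and the strict negativity of $\mathrm{Re}\,\lambda_i$ give $|\hat{\tilde G}|\lesssim e^{-t/C}$ with all $\xi$-derivatives bounded. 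That contribution is then smooth and exponentially decaying, and it is harmless, so we concentrate on $|\xi|\ge K$.

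\textbf{Step 1 (symbol bounds).} By Lemma \ref{l 2.2}, for $|\xi|\geq K$ we have $\mathrm{Re}\,\lambda_i\le -c_0|\xi|^2$ with $c_0>0$; this holds both when $(\gamma^\pm)^2-4\sigma^\pm\ge0$ and when it is negative. Hence
\[
|e^{\lambda_it}|\lesssim e^{-c_0|\xi|^2t}\le e^{-c_0K^2t/2}e^{-c_0|\xi|^2t/2}.
\]
The eigenvalues $\lambda_i(\xi)$ and the projections $P^i(\xi)$ are analytic in $|\xi|$ (away from the critical case, which is handled separately as in Case 2). They are classical symbols, so each $\xi$-derivative gains a factor $|\xi|^{-1}$; from \eqref{2.36} the entry $(i,j)$ has order $|\xi|^{m_{ij}}$ with $m_{ij}\le 0$ except $m_{21}=m_{43}=1$. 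Since $\partial_\xi^\beta(e^{\lambda_it})$ is $e^{\lambda_it}$ times a combination of terms like $(t|\xi|)^k|\xi|^{-|\beta|+k}$, the factor $e^{-c_0|\xi|^2t/2}$ absorbs the powers of $t|\xi|^2$. This yields
\[
|\partial_\xi^\beta(\xi^\alpha\hat{\tilde G}^h_{ij})|\lesssim e^{-t/C}|\xi|^{|\alpha|+m_{ij}-|\beta|}\,e^{-c|\xi|^2t}.
\]

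\textbf{Step 2 (pointwise bounds).} Write $\partial_x^\alpha\tilde G^h_{ij}(x,t)=\int e^{{\rm i}x\cdot\xi}({\rm i}\xi)^\alpha\hat{\tilde G}^h_{ij}\,d\xi$. If $|x|^2\le t$, take the absolute value directly:
\[
\int|\xi|^{|\alpha|+m}e^{-c|\xi|^2t}d\xi\lesssim t^{-\frac{3+|\alpha|+m}{2}}
\]
for $t\le1$, while for $t\ge1$ the factor $e^{-t/C}$ dominates everything. Negative $m$ only helps, because $|\xi|\ge K$; we therefore use $m\le0$ generically and $m=1$ for $(2,1),(4,3)$, which gives the stated exponents $\frac{3+|\alpha|}{2}$ and $\frac{4+|\alpha|}{2}$. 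If $|x|^2\ge t$, integrate by parts $2N$ times using $(-\Delta_\xi)^N e^{{\rm i}x\cdot\xi}=|x|^{2N}e^{{\rm i}x\cdot\xi}$. By Step 1 this gains $|x|^{-2N}t^{N}$ after the same scaling $\xi\sim t^{-1/2}$, producing the factor $(1+|x|^2/t)^{-N}$.

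\textbf{Main obstacle.} The delicate point is the symbol estimate near the cutoff and the short-time singularity. As $t\to0$, $\partial_\xi^\beta$ of the symbol is not integrable near the origin in $\xi$, but the cutoff removes the origin. For $t\le1$, however, the integration-by-parts scaling needs $|\partial_\xi^{2N}(\cdot)|\lesssim |\xi|^{-2N}$ uniformly, and this is exactly the classical-symbol property obtained from the asymptotic expansions in Lemma \ref{l 2.2} and \eqref{2.36}. We would verify this by expanding $\lambda_i$ and $P^i$ in powers of $|\xi|^{-1}$, using the resolvent formula $P^i=\frac{1}{2\pi{\rm i}}\oint(z-\mathcal A(\xi))^{-1}dz$ to guarantee smooth dependence. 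Any singular (delta-like) parts, which arise if the $\mathcal O(1)$ terms do not decay in $\xi$, are absent here because every eigenvalue carries $|\xi|^2$ damping. This is also why no $\delta$-function component appears, in contrast to the Navier--Stokes case.
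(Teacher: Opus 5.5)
Your route is the one the paper takes. Its justification consists only of the high-frequency expansions \eqref{2.35}--\eqref{2.36} and an appeal to the real-analysis method of \cite{Wang2}. Your Step~1 is a correct rendering of that: ${\rm Re}\,\lambda_i\le -c_0|\xi|^2$, the symbol orders $m_{ij}$ read off from \eqref{2.36}, and powers of $t|\xi|^2$ absorbed by the Gaussian. So is the case $t\ge1$ of Step~2, where $e^{-t/C}$ absorbs everything.

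The step that fails is the integration by parts for $0<t\le1$, $|x|^2\ge t$. After $2N$ integrations your symbol bounds leave
\[
|x|^{-2N}\int_{|\xi|\ge K}|\xi|^{|\alpha|+m-2N}e^{-c|\xi|^2t}\,d\xi .
\]
The substitution $\xi=t^{-1/2}\eta$ produces $t^{N-\frac{3+|\alpha|+m}{2}}$ only when the rescaled integral converges at $\eta=0$, i.e.\ only when $2N<3+|\alpha|+m$. For the large $N$ the lemma asks for, the integral is dominated by $|\xi|\sim K$ and is $O(1)$ uniformly in $t$. The terms in which derivatives fall on $\chi_K$ are also $O(1)$. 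So you get $|x|^{-2N}$, not $t^{N}|x|^{-2N}$. This is exactly the point you raised in your last paragraph: removing the origin makes the symbol bounds valid, and it is also what destroys the parabolic scaling.

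No sharper argument closes this, because the estimate as stated fails as $t\to0^+$. Since $\hat{\tilde G}(\xi,0)=I$, integrating by parts in $\xi$ and using dominated convergence gives, for fixed $x\neq0$,
\[
\tilde G^h_{11}(x,t)\longrightarrow \mathcal{F}^{-1}[\chi_K](x)=-\mathcal{F}^{-1}[1-\chi_K](x).
\]
The right-hand side is a Schwartz function that does not vanish identically. By contrast, $e^{-t/C}t^{-3/2}(1+|x|^2/t)^{-N}\le t^{N-3/2}|x|^{-2N}\to0$ for $N\ge2$.

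Entries of $\tilde G$ that carry the nonlocal $\Lambda$ behave worse still. For $(1,2)$, the short-time leading term $-t\Lambda$ gives
\[
\tilde G^h_{12}(x,t)=c\,t|x|^{-4}(1+o(1))+O(1)\quad\text{for }\sqrt t\ll|x|\le1,\qquad c\neq0,
\]
e.g.\ of size $t^{-1/3}$ on $|x|=t^{1/3}$. This is not bounded by $t^{-3/2}(1+|x|^2/t)^{-N}$ plus any $t$-independent tail once $N$ is large.

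So the lemma as stated holds only for $t\ge1$, which your argument does prove. For $t\le1$ it must be weakened, at minimum by an extra term $e^{-t/C}(1+|x|^2)^{-N}$. A heat-kernel-type short-time bound is more naturally sought for the Green's function in the original variables $(n^\pm,\mathbf m^\pm)$, whose symbol is smooth in $\xi$. The paper's one-line justification does not address this point either.
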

\begin{remark}\label{r 2.1} The above estimates are based on the real analysis method as in \cite{Wang2}, which also shows that the high frequency of Green's function $\tilde{G}(x,t)$ behaves like the heat kernel or the derivative of the heat kernel. Additionally, since the incompressible part of the system (\ref{1.1}) obeys two decoupled heat equations, the high frequency of Green's function $G(x,t)$ of  (\ref{1.1}) has the same estimates as in Lemma \ref{l 2.2(1)}. \end{remark}

\vspace{2.5mm}
{\bf Middle frequency part.}

\vspace{2.5mm}
For the middle frequency part, we only need to prove the following two lemmas. The first one is about the positivity of the real part of spectra based on the Routh-Hurwitz theorem.
\begin{lemma}\label{l 2.3}
When $\eta_1\leq|\xi|\leq K$ with two fixed positive constants $\eta_1$ and $K$, there exists a positive constant $b$ such that
\begin{equation}\label{2.41}
\begin{array}{ll}
{\rm Re}(\lambda_1(|\xi|),\lambda_2(|\xi|),\lambda_3(|\xi|),\lambda_4(|\xi|))\leq -b.
\end{array}
\end{equation}
\end{lemma}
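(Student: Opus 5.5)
We prove Lemma \ref{l 2.3} in two steps: first no eigenvalue of $\mathcal A(\xi)$ has nonnegative real part for any $\xi\neq0$, then compactness of the annulus gives a uniform gap. Write $r=|\xi|>0$ and let $p(\lambda)=\lambda^4+a_1\lambda^3+a_2\lambda^2+a_3\lambda+a_4$ be the characteristic polynomial \eqref{2.12}, where
\begin{align*}
a_1&=(\nu^++\nu^-)r^2,\\
a_2&=(\beta_1+\beta_4)r^2+(\sigma^++\sigma^-+\nu^+\nu^-)r^4,\\
a_3&=(\beta_1\nu^-+\beta_4\nu^+)r^4+(\nu^+\sigma^-+\nu^-\sigma^+)r^6,\\
a_4&=(\beta_1\sigma^-+\beta_4\sigma^+)r^6+\sigma^+\sigma^-r^8.
\end{align*}
All $a_i>0$, since $\nu^\pm,\sigma^\pm,\beta_1,\beta_4>0$.

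\textbf{Step 1: Routh--Hurwitz.} For a real quartic, all roots lie in $\{{\rm Re}\,\lambda<0\}$ if and only if $a_i>0$ for all $i$, $a_1a_2-a_3>0$, and $a_1a_2a_3-a_3^2-a_1^2a_4>0$. The positivity of the $a_i$ is already known, so only the two Hurwitz determinants need checking.

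For the first one, expand $a_1a_2-a_3$. The $r^4$ coefficient is $(\nu^++\nu^-)(\beta_1+\beta_4)-(\beta_1\nu^-+\beta_4\nu^+)=\beta_1\nu^++\beta_4\nu^->0$. The $r^6$ coefficient is $(\nu^++\nu^-)(\sigma^++\sigma^-+\nu^+\nu^-)-\nu^+\sigma^--\nu^-\sigma^+=\nu^+\sigma^++\nu^-\sigma^-+\nu^+\nu^-(\nu^++\nu^-)>0$. Hence $a_1a_2-a_3>0$.

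\textbf{Step 2: the third Hurwitz determinant.} This is the main obstacle. Factor out the scaling and set $s=r^2$, so that $\Delta_3:=a_1a_2a_3-a_3^2-a_1^2a_4$ becomes $r^{12}\,q(s)$ with $q$ a polynomial of degree $3$ in $s$. The plan is to compute all coefficients of $q$ and check that each is strictly positive.

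If the coefficient-by-coefficient check fails, I would use a structural fallback instead. Suppose $p(\mathrm{i}\omega)=0$ for some real $\omega$. Then both $\omega^4-a_2\omega^2+a_4=0$ and $\omega(a_3-a_1\omega^2)=0$ hold. Since $a_4>0$, $\omega\neq0$, so $\omega^2=a_3/a_1$, and substituting gives exactly $\Delta_3=0$. Thus it suffices to show $\Delta_3\neq0$ for every $r>0$. To do this I would combine two facts:
\begin{itemize}
\item by Lemma \ref{l 2.1} all four roots satisfy ${\rm Re}\,\lambda<0$ for small $r$;
\item by Lemma \ref{l 2.2} all four roots satisfy ${\rm Re}\,\lambda<0$ for large $r$.
\end{itemize}
Roots depend continuously on $r$, so no root can reach ${\rm Re}\,\lambda\geq0$ without first crossing the imaginary axis, which is exactly what $\Delta_3\neq0$ excludes.

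A second fallback is an energy argument. Suppose $\mathcal A(\xi)v=\lambda v$ with ${\rm Re}\,\lambda\ge0$. Use a weighted inner product in which the pressure block is symmetrizable: the matrix $\begin{pmatrix}\beta_1&\beta_2\\ \beta_3&\beta_4\end{pmatrix}$ is symmetric and positive semidefinite, with $\beta_1\beta_4=\beta_2^2$ by \eqref{2.1(1)}. Taking the real part of the resulting identity gives
\[
{\rm Re}\,\lambda\,\|v\|^2_{w}=-\nu^+r^2|\varphi^+|^2-\nu^-r^2|\varphi^-|^2\le 0 .
\]
Therefore $\varphi^\pm=0$. The $n$-equations then give $\lambda n^\pm=0$, and the $\varphi$-equations force $n^\pm=0$ because of the $\sigma^\pm r^3$ terms. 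So $v=0$, a contradiction. The weighted norm must be chosen with care so that the $\sigma$ terms also enter symmetrically.

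\textbf{Step 3: uniform bound.} The eigenvalues of $\mathcal A(\xi)$ depend continuously on $|\xi|$, and ${\rm Re}\,\lambda_i<0$ holds at every point of the compact interval $[\eta_1,K]$. Hence
\[
\max_{i}\ \max_{\eta_1\le|\xi|\le K}{\rm Re}\,\lambda_i(|\xi|)=:-b<0,
\]
which is \eqref{2.41}.
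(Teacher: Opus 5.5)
Only your energy fallback actually proves the lemma. The Routh--Hurwitz line you lead with, which is also the one the paper invokes, has a real gap.

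\textbf{The gap in the Routh--Hurwitz route.} With $s=|\xi|^2$ one gets $\Delta_3=a_1a_2a_3-a_3^2-a_1^2a_4=s^4q(s)$, where $q(s)=c_0+c_1s+c_2s^2$ is quadratic (not $r^{12}$ times a cubic). The coefficients are
\[
c_0=(\beta_1\nu^-+\beta_4\nu^+)(\beta_1\nu^++\beta_4\nu^-),\qquad c_2=\nu^+\nu^-(\sigma^+-\sigma^-)^2+\nu^+\nu^-(\nu^++\nu^-)(\nu^+\sigma^-+\nu^-\sigma^+),
\]
\[
c_1=\nu^+\nu^-\big[(\nu^++\nu^-)(\beta_1\nu^-+\beta_4\nu^+)+2(\beta_1-\beta_4)(\sigma^+-\sigma^-)\big].
\]
So $c_0,c_2>0$, but $c_1<0$ is possible, e.g.\ $\nu^\pm=0.1$, $\beta_1=4$, $\beta_4=1$, $\sigma^+=1$, $\sigma^-=10$. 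This is not artificial: $\beta_1\neq\beta_4$ whenever $\bar\rho^+\neq\bar\rho^-$, and $\sigma^\pm$ are free. Your coefficient-by-coefficient check therefore fails in general. What is really needed is $c_1>-2\sqrt{c_0c_2}$, which you do not address.

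Your first fallback does not supply this either. Lemmas \ref{l 2.1} and \ref{l 2.2} only control $|\xi|\le\eta_1$ and $|\xi|\ge K$. The continuity argument gives stability on $[\eta_1,K]$ only \emph{assuming} $\Delta_3\neq0$ there, which is exactly what had to be proved, so it is circular.

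\textbf{The energy fallback.} This argument is correct and, together with Step 3, proves the lemma; you just need to write the weight explicitly. On $(n^+,\varphi^+,n^-,\varphi^-)$, let $W$ be the identity on the $\varphi$-components and, on the $n$-components,
\[
M(\xi)=\begin{pmatrix}\beta_1+\sigma^+|\xi|^2&\beta_2\\ \beta_2&\beta_4+\sigma^-|\xi|^2\end{pmatrix}.
\]
\begin{itemize}
\item Using $\beta_3=\beta_2$, the matrix $W\mathcal A(\xi)$ is skew-symmetric plus $-{\rm diag}(0,\nu^+|\xi|^2,0,\nu^-|\xi|^2)$. Hence $W\mathcal A+\mathcal A^TW=-2\,{\rm diag}(0,\nu^+|\xi|^2,0,\nu^-|\xi|^2)$.
\item By $\beta_1\beta_4=\beta_2^2$, $\det M=(\beta_1\sigma^-+\beta_4\sigma^+)|\xi|^2+\sigma^+\sigma^-|\xi|^4>0$, so $W$ is positive definite for $\xi\neq0$. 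The semidefinite pressure block alone would not work, because the $\sigma$-terms would not cancel.
\item For $\mathcal Av=\lambda v$ this gives ${\rm Re}\,\lambda\; v^*Wv=-\nu^+|\xi|^2|\varphi^+|^2-\nu^-|\xi|^2|\varphi^-|^2$, with $\nu^\pm>0$ because $2\mu^\pm+3\lambda^\pm\ge0$.
\item Your chain then goes through: ${\rm Re}\,\lambda\ge0$ forces $\varphi^\pm=0$, then $M(n^+,n^-)^T=0$, then $v=0$.
\end{itemize}

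\textbf{Comparison with the paper.} The paper only cites the Routh--Hurwitz theorem without doing the computation. Your Lyapunov route avoids the multi-parameter inequality $q(s)>0$ entirely and works for all $\xi\neq0$ at once. It also shows why capillarity is essential: for $\sigma^\pm=0$, $M$ is singular and $a_4=0$, so a zero eigenvalue appears. As a by-product, combined with the Hurwitz criterion it shows a posteriori that $q(s)>0$ for all $s>0$ even when $c_1<0$. Make the energy argument your main proof and drop the other two.
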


The analyticity for the middle frequency is partially based on the idea in Li \cite{ld}.
\begin{lemma}\label{l 2.4}
The middle part $\hat{\tilde{G}}^m(\xi,t)$ is analytic when $|\xi|^2\geq \delta$, where $\delta$ is a positive constant.
\end{lemma}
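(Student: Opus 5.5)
The statement to prove is Lemma \ref{l 2.4}: the middle-frequency symbol $\hat{\tilde G}^m(\xi,t)$ is analytic for $|\xi|^2\ge\delta$. I would work in the complexified variable $\zeta=|\xi|^2$, or more precisely in $s=|\xi|$, since $\mathcal A(\xi)$ in \eqref{2.11} depends on $\xi$ only through $|\xi|$ and polynomially so.

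First I would observe that $\mathcal A(s)$ is an entire matrix-valued function of $s$. Hence $e^{t\mathcal A(s)}=\sum_{k\ge0}t^k\mathcal A(s)^k/k!$ is entire in $s$ for each fixed $t$, because the series converges locally uniformly in $s$. The only delicate point is that the \emph{spectral representation} $\sum_i e^{\lambda_i t}P^i(s)$ has possible singularities. These occur where eigenvalues collide: branch points of the $\lambda_i$, and poles of $P^i$ coming from the factors $(\lambda_i-\lambda_j)^{-1}$.

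Following Li \cite{ld}, I would write the semigroup through the resolvent via Dunford's integral:
\[
e^{t\mathcal A(s)}=\frac{1}{2\pi \mathrm i}\oint_{\Gamma}e^{t\lambda}(\lambda\mathbb I-\mathcal A(s))^{-1}\,d\lambda .
\]
Here $\Gamma$ is a fixed contour enclosing all roots of \eqref{2.12} for $s$ in a complex neighborhood $\mathcal D$ of the real segment $[\sqrt\delta,K]$. Such a $\Gamma$ exists because the roots depend continuously on $s$ and $\mathcal D$ can be taken with compact closure.

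Using Lemma \ref{l 2.3}, together with continuity of the roots under small complex perturbation of $s$, I would choose $\Gamma$ inside $\{\mathrm{Re}\,\lambda\le -b/2\}$. The integrand is analytic in $s\in\mathcal D$ for each $\lambda\in\Gamma$, since $\det(\lambda\mathbb I-\mathcal A(s))\neq0$ there. It is also jointly continuous. Differentiating under the integral then gives analyticity of $\hat{\tilde G}^m$ in $s$, and hence in $\xi$ through $|\xi|^2$, on the region $|\xi|^2\ge\delta$ intersected with the middle range. The same representation yields the uniform bound $|\hat{\tilde G}^m|\lesssim e^{-bt/2}$.

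The main obstacle is the rigorous choice of $\mathcal D$ and $\Gamma$ uniform in $s$. I would handle it by compactness: cover $[\sqrt\delta,K]$ by finitely many discs, on each of which Rouché's theorem keeps the roots inside a fixed contour, and then take the union of these contours. A secondary issue is that $|\xi|$ itself is not analytic at $\xi=0$. This is harmless here, since only $|\xi|^2\ge\delta$ is considered and $\mathcal A$ involves only $s$ and $s^3=s\cdot s^2$. Alternatively, one can note that the combination entering $\tilde G$ is analytic in $s^2$ after conjugating by $\mathrm{diag}(1,s,1,s)$-type scalings, which removes odd powers.
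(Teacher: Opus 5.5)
Your argument is correct, but there is no proof in the paper to compare it with: Lemma~\ref{l 2.4} is only stated, with the remark that it follows ``partially'' the idea of Li. The route that citation suggests is to work with the spectral representation $\sum_i e^{\lambda_i t}P^i(\xi)$. One then checks that the branch points of the $\lambda_i$ and the poles of the $P^i$ at eigenvalue coalescence cancel in the sum, since the entries are symmetric functions of the coalescing roots.

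You bypass the spectral representation altogether, and your first observation already proves the lemma. Because $\mathcal A(s)$ is a polynomial matrix, $s\mapsto e^{t\mathcal A(s)}$ is entire. Since $\xi\mapsto|\xi|$ is real-analytic on $\{|\xi|^2\ge\delta\}$, it follows that $\hat{\tilde G}(\xi,t)=e^{t\mathcal A(|\xi|)}$ is analytic there. Your conjugation remark is also right: $\mathrm{diag}(1,s,1,s)^{-1}\mathcal A(s)\,\mathrm{diag}(1,s,1,s)$ is a polynomial in $s^2$.

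The Dunford-integral part is therefore not needed for analyticity. What it buys is the uniform bound $|\hat{\tilde G}|\lesssim e^{-bt/2}$ on a complex neighborhood of the middle annulus. Via Cauchy estimates, this controls all $\xi$-derivatives, which is what you need to pass to Lemma~\ref{l 2.5}, and it avoids any case analysis of Jordan blocks.

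That step can be simplified. You do not need Rouch\'e on finitely many discs and a ``union of contours'', which is not itself a contour. For $s$ in a small compact complex neighborhood of $[\sqrt\delta,K]$, the roots of the characteristic polynomial are uniformly bounded. By continuity and Lemma~\ref{l 2.3}, they also lie in $\mathrm{Re}\,\lambda<-3b/4$. So a single rectangle $\{-M\le\mathrm{Re}\,\lambda\le-b/2,\ |\mathrm{Im}\,\lambda|\le M\}$ serves as $\Gamma$ for all such $s$.
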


Lemma \ref{l 2.3} and Lemma \ref{l 2.4} directly yield the space-time estimates for middle frequency $\tilde{G}^m$:
\begin{lemma}\label{l 2.5}For any $|\alpha|\geq0$, there exists a constant $b>0$ such that
\begin{equation*}
|\partial_x^\alpha \tilde{G}^m(x,t)|\leq Ce^{-bt}(1+|x|^2)^{-N},
\end{equation*}
where the constant $N>0$ can be arbitrarily large.
\end{lemma}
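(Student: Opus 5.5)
The statement to prove is Lemma \ref{l 2.5}: the pointwise bound $|\partial_x^\alpha\tilde G^m(x,t)|\le Ce^{-bt}(1+|x|^2)^{-N}$ for the middle-frequency part. The plan is to derive spatial decay from smoothness of the Fourier symbol, and time decay from the spectral gap of Lemma \ref{l 2.3}.

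\textbf{Setup.} Write
\[
\tilde G^m(x,t)=\int_{\mathbb{R}^3} e^{{\rm i}x\cdot\xi}\chi_m(\xi)e^{t\mathcal A(\xi)}\,d\xi ,
\]
where $\chi_m$ is a smooth cutoff supported in $\eta_1/2\le|\xi|\le 2K$ and identically one on $\eta_1\le|\xi|\le K$. The spatial decay $(1+|x|^2)^{-N}$ will come from integrating by parts in $\xi$: $|x|^{2N}\partial_x^\alpha\tilde G^m$ is, up to constants, the inverse Fourier transform of $(-\Delta_\xi)^N[(\mathrm{i}\xi)^\alpha\chi_m e^{t\mathcal A(\xi)}]$. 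Since the support is compact, it suffices to show
\[
\sup_{\eta_1/2\le|\xi|\le 2K}\big|\partial_\xi^\beta e^{t\mathcal A(\xi)}\big|\le C_\beta e^{-bt}\quad\text{for all }|\beta|\le 2N.
\]
The same bound with $\beta=0$ handles the case $|x|\le1$.

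\textbf{Analyticity and the representation.} Following Lemma \ref{l 2.4}, $\mathcal A(\xi)$ depends only on $|\xi|$, and its entries are polynomials in $|\xi|$. On the annulus $|\xi|$ is real-analytic, so $\mathcal A(\xi)$ is analytic there. I will use the Dunford--Cauchy representation
\[
e^{t\mathcal A(\xi)}=\frac{1}{2\pi{\rm i}}\oint_\Gamma e^{t\lambda}\,(\lambda-\mathcal A(\xi))^{-1}\,d\lambda .
\]
The contour $\Gamma$ is chosen as follows. By Lemma \ref{l 2.3} and continuity, the eigenvalues for $|\xi|$ in the slightly enlarged compact annulus have real part at most $-b_0<0$. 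Since $\mathcal A(\xi)$ is bounded there, the eigenvalues also lie in a fixed ball. Take $\Gamma$ to be a fixed contour that encloses all these eigenvalues and stays in $\{\mathrm{Re}\,\lambda\le -b\}$ with $0<b<b_0$, at positive distance from the spectrum uniformly in $\xi$.

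\textbf{Bounding the $\xi$-derivatives.} This representation avoids the eigenvalue crossings and non-diagonalizability that can occur in the middle range, which is the main obstacle. A naive spectral decomposition $\sum_i e^{\lambda_i t}P^i$ would have derivatives blow up near coincident eigenvalues. Here the resolvent is analytic in $\xi$ and uniformly bounded on $\Gamma$ by compactness. Each derivative $\partial_\xi^\beta(\lambda-\mathcal A)^{-1}$ is a finite sum of products of resolvents and derivatives of $\mathcal A$, so it is bounded uniformly in $\lambda\in\Gamma$. Crucially, $t$ appears only in $e^{t\lambda}$, so differentiating in $\xi$ produces no polynomial factors of $t$. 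Hence
\[
|\partial_\xi^\beta e^{t\mathcal A(\xi)}|\le C|\Gamma|e^{-bt}.
\]
The extra factor $(\mathrm{i}\xi)^\alpha$ is harmless on the compact support.

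\textbf{Conclusion.} Combining these bounds gives $|x|^{2N}|\partial_x^\alpha\tilde G^m|\le Ce^{-bt}$, and together with the $|x|\le1$ case this yields the lemma with $N$ arbitrary.
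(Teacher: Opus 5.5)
Your argument is correct and is essentially the paper's route. The paper only says that the spectral gap of Lemma \ref{l 2.3} together with the analyticity of Lemma \ref{l 2.4} directly gives the bound, and your Dunford--Cauchy integral makes that step explicit: it gives uniform $\xi$-derivative bounds with no powers of $t$, unaffected by eigenvalue coalescence. One small fix: continuity alone does not carry the gap from $\eta_1\le|\xi|\le K$ out to your cutoff's support $\eta_1/2\le|\xi|\le 2K$, but Lemma \ref{l 2.3} is stated for arbitrary fixed positive constants, so apply it directly on the larger annulus (or shrink the support of your cutoff).
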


%

\section{Pointwise space-time behavior of Green's function}
\quad\quad We shall bring Green's function in Fourier space back to the physical space, and combine the compressible part and the incompressible part to obtain the pointwise description of Green's function of the original system (\ref{2.2}). The difficulties mainly include resolving the singularity from the Riesz operator in low frequency of Green's matrix by using suitable combinations, and giving the description of the singular part in high frequency arising from the definition of Green's function.

\subsection{Pointwise space-time behavior of low frequency}
\quad\quad  The Hodge decomposition, Lemma \ref{l 2.1} together with (\ref{2.8(1)}) and (\ref{2.14})-(\ref{2.16}) directly give the following:
\begin{align}\label{3.1}
\hat{n}^{+,l}=&\Big[\frac{\beta_1(e^{\lambda_1t}+e^{\lambda_2t})}{2(\beta_1+\beta_4)}+\Big(\frac{\beta_1\beta_4(\nu^+-\nu^-)}{(\beta_1+\beta_4)\mathcal
R}-\frac{\beta_4\tilde \lambda_4}{\mathcal R}\Big)e^{\lambda_3t}+\Big(\frac{\beta_1\beta_4(\nu^+-\nu^-)}{(\beta_1+\beta_4)\mathcal
R}-\frac{\beta_4\tilde \lambda_3}{\mathcal
R}\Big)e^{\lambda_4t}\Big]\hat{n}_0^+\nonumber\\
\!\!&\!+\!\Big[\mathrm{i}\frac{\beta_1(e^{\lambda_1t}\!-\!e^{\lambda_2t})}{2(\beta_1+\beta_4)^\frac{3}{2}}-\frac{\beta_4}{\mathcal
R|\xi|}\Big(e^{\lambda_3t}+e^{\lambda_4t}\Big)\Big]\frac{\mathrm{i}\xi^T}{|\xi|}\hat{\mathbf{m}}_{0}^{+}\nonumber\\
&+\Big[\frac{\beta_2(e^{\lambda_1t}\!+\!e^{\lambda_2t})}{2(\beta_1+\beta_4)}+\Big(\frac{\beta_2\beta_4(\nu^+-\nu^-)}{(\beta_1+\beta_4)\mathcal
R}+\frac{\beta_2\tilde \lambda_4}{\mathcal R}\Big)e^{\lambda_3t}+\Big(\frac{\beta_2\beta_4(\nu^+-\nu^-)}{(\beta_1+\beta_4)\mathcal
R}+\frac{\beta_2\tilde \lambda_3}{\mathcal R}\Big)e^{\lambda_4t}\Big]\hat{n}_0^{-} \nonumber\\[-0.5mm]
&+\Big[\mathrm{i}\frac{\beta_2(e^{\lambda_1t}-e^{\lambda_2t})}{2(\beta_1+\beta_4)^\frac{3}{2}}+\frac{\beta_2}{\mathcal R|\xi|}(e^{\lambda_3t}+e^{\lambda_4t})\Big]\frac{\mathrm{i}\xi^T}{|\xi|}\hat{\mathbf{m}}_{0}^{-}+\cdots\nonumber\\[-0.5mm]
:=&(G_{11}^l,G_{12}^l,G_{13}^l,G_{14}^l)\cdot(\hat{n}_{0}^+,\hat{\mathbf{m}}_0^+,\hat{n}_{0}^-,\hat{\mathbf{m}}_{0}^-),
\end{align}
\vspace{-8mm}
\begin{align}\label{3.2}
\!\!\!\!\!\!\!\!\!\!\!\!\!\!\!\!\!\!\!\!\!\!\!\!\!\!\!&\!\!\!\!\!\!\hat{\mathbf{m}}^{+,l} =-\widehat{\Lambda^{-1}\nabla \varphi^+}-\widehat{\Lambda ^{-1}{\rm div} \phi^+}\nonumber\\
=&\Big[-\frac{\beta_1}{2(\beta_1+\beta_4)^\frac{1}{2}}(e^{\lambda_1t}-e^{\lambda_2t})\Big]\frac{\mathrm{i}\xi\hat{n}_{0}^{+}}{|\xi|}\nonumber\\
&+\bigg[\Big(\frac{\beta_1(e^{\lambda_1t}\!+\!e^{\lambda_2t})}{2(\beta_1+\beta_4)}-\Big(\frac{\beta_4(\beta_4\nu^+\!+\!\beta_1\nu^-)}{(\beta_1+\beta_4)\mathcal
R}+\frac{\beta_4\tilde \lambda_4}{\mathcal R}\Big)e^{\lambda_3t}-\Big(\frac{\beta_4(\beta_4\nu^+\!+\!\beta_1\nu^-)}{(\beta_1+\beta_4)\mathcal
R}+\frac{\beta_4\tilde \lambda_3}{\mathcal R}\Big)e^{\lambda_4t}\Big)\frac{\xi\xi^T}{|\xi|^{2}}\nonumber\\
&\ \ \ \ \ \ \ \ \ \ +e^{-\nu^+|\xi|^2t}\Big(I-\frac{\xi\xi^T}{|\xi|^{2}}\Big)\bigg]\hat{\mathbf{m}}_{0}^+\nonumber\\
&+\Big[-\frac{\beta_2}{2(\beta_1+\beta_4)^\frac{1}{2}}(e^{\lambda_1t}+e^{\lambda_2t})\mathrm{i}\xi\Big]\hat{n}_{0}^{-}\nonumber\\[-0.5mm]
&\!+\!\bigg[\frac{\beta_2(e^{\lambda_1t}\!+\!e^{\lambda_2t})}{2(\beta_1+\beta_4)}\!+\!\Big(\frac{\beta_2(\beta_4\nu^+\!\!+\!\beta_1\nu^-)}{(\beta_1+\beta_4)\mathcal R}\!+\!\frac{\beta_2\tilde \lambda_4}{\mathcal R}\Big)e^{\lambda_3t}\!+\!\Big(\frac{\beta_2(\beta_4\nu^+\!\!+\!\beta_1\nu^-)}{(\beta_1+\beta_4)\mathcal R}\!+\!\frac{\beta_2\tilde \lambda_3}{\mathcal R}\Big)e^{\lambda_4t}\bigg]\frac{\xi\xi^T}{|\xi|^{2}}\hat{\mathbf{m}}_{0}^{-}\!+\cdots\nonumber\\
:=&(G_{21}^l,G_{22}^l,G_{23}^l,G_{24}^l)\cdot(\hat{n}_{0}^+,\hat{\mathbf{m}}_0^+,\hat{n}_{0}^-,\hat{\mathbf{m}}_{0}^-),
\end{align}
\vspace{-8mm}
\begin{align}\label{3.3}
\hat{n}^{-,l}=&\Big[\frac{\beta_2(e^{\lambda_1t}+e^{\lambda_2t})}{2(\beta_1+\beta_4)}+\Big(\frac{\beta_1\beta_2(\nu^--\nu^+)}{(\beta_1+\beta_4)\mathcal
R}+\frac{\beta_4\tilde \lambda_4}{\mathcal R}\Big)e^{\lambda_3t}+\Big(\frac{\beta_1\beta_4(\nu^+-\nu^-)}{(\beta_1+\beta_4)\mathcal
R}+\frac{\beta_4\tilde \lambda_3}{\mathcal
R}\Big)e^{\lambda_4t}\Big]\hat{n}_0^+\nonumber\\
\!\!&\!+\!\Big[\mathrm{i}\frac{\beta_2}{2(\beta_1+\beta_4)^\frac{3}{2}}(e^{\lambda_1t}\!-\!e^{\lambda_2t})+\frac{\beta_2}{\mathcal
R|\xi|}\Big(e^{\lambda_3t}+e^{\lambda_4t}\Big)\Big]\frac{\mathrm{i}\xi^T}{|\xi|}\hat{\mathbf{m}}_{0}^{+}\nonumber\\
&+\Big[\frac{\beta_4(e^{\lambda_1t}\!+\!e^{\lambda_2t})}{2(\beta_1+\beta_4)}+\Big(\frac{\beta_1\beta_4(\nu^--\nu^+)}{(\beta_1+\beta_4)\mathcal
R}-\frac{\beta_1\tilde \lambda_4}{\mathcal R}\Big)e^{\lambda_3t}+\Big(\frac{\beta_1\beta_4(\nu^--\nu^+)}{(\beta_1+\beta_4)\mathcal
R}-\frac{\beta_1\tilde \lambda_3}{\mathcal R}\Big)e^{\lambda_4t}\Big]\hat{n}_0^{-} \nonumber\\[-0.5mm]
&+\Big[\mathrm{i}\frac{\beta_4}{2(\beta_1+\beta_4)^\frac{3}{2}}(e^{\lambda_1t}-e^{\lambda_2t})-\frac{\beta_1}{\mathcal R|\xi|}(e^{\lambda_3t}+e^{\lambda_4t})\Big]\frac{\mathrm{i}\xi^T}{|\xi|}\hat{\mathbf{m}}_{0}^{-}+\cdots\nonumber\\[-0.5mm]
:=&(G_{11}^l,G_{12}^l,G_{13}^l,G_{14}^l)\cdot(\hat{n}_{0}^+,\hat{\mathbf{m}}_0^+,\hat{n}_{0}^-,\hat{\mathbf{m}}_{0}^-),
\end{align}
\vspace{-8mm}
\begin{align}\label{3.4}
&\!\!\!\!\!\!\!\!\!\hat{\mathbf{m}}^{-,l}=-\widehat{\Lambda^{-1}\nabla \varphi^-}-\widehat{\Lambda ^{-1}{\rm div} \phi^-}\nonumber\\
=&\Big[-\frac{\beta_2}{2(\beta_1+\beta_4)^\frac{1}{2}}(e^{\lambda_1t}-e^{\lambda_2t})\Big]\frac{\mathrm{i}\xi\hat{n}_{0}^{+}}{|\xi|}\nonumber\\
&+\bigg[\Big(\frac{\beta_2(e^{\lambda_1t}\!+\!e^{\lambda_2t})}{2(\beta_1+\beta_4)}+\Big(\frac{\beta_2(\beta_4\nu^+\!+\!\beta_1\nu^-)}{(\beta_1+\beta_4)\mathcal
R}+\frac{\beta_2\tilde \lambda_4}{\mathcal R}\Big)e^{\lambda_3t}+\Big(\frac{\beta_2(\beta_4\nu^+\!+\!\beta_1\nu^-)}{(\beta_1+\beta_4)\mathcal
R}+\frac{\beta_2\tilde \lambda_3}{\mathcal R}\Big)e^{\lambda_4t}\Big)\frac{\xi\xi^T}{|\xi|^{2}}\bigg]\hat{\mathbf{m}}_{0}^+\nonumber\\
&+\Big[-\frac{\beta_4}{2(\beta_1+\beta_4)^\frac{1}{2}}(e^{\lambda_1t}+e^{\lambda_2t})\mathrm{i}\xi\Big]\hat{n}_{0}^{-}\nonumber\\[-0.5mm]
&+\bigg[\Big(\frac{\beta_4(e^{\lambda_1t}\!+\!e^{\lambda_2t})}{2(\beta_1+\beta_4)}\!-\!\Big(\frac{\beta_1(\beta_4\nu^+\!+\!\beta_1\nu^-)}{(\beta_1+\beta_4)\mathcal R}+\frac{\beta_2\tilde \lambda_4}{\mathcal R}\Big)e^{\lambda_3t}-\Big(\frac{\beta_1(\beta_4\nu^+\!+\!\beta_1\nu^-)}{(\beta_1+\beta_4)\mathcal R}+\frac{\beta_2\tilde \lambda_3}{\mathcal R}\Big)e^{\lambda_4t}\Big)\frac{\xi\xi^T}{|\xi|^{2}}\nonumber\\
&\ \ \ \ \ \ \ +e^{-\nu^-|\xi|^2t}\Big(I-\frac{\xi\xi^T}{|\xi|^{2}}\Big)\bigg]\hat{\mathbf{m}}_{0}^{-}+\cdots\nonumber\\
:=&(G_{21}^l,G_{22}^l,G_{23}^l,G_{24}^l)\cdot(\hat{n}_{0}^+,\hat{\mathbf{m}}_0^+,\hat{n}_{0}^-,\hat{\mathbf{m}}_{0}^-).
\end{align}
We can mainly deal with the leading term of each entry, since the rest terms (denoted by ``$\cdots$")  in (\ref{3.1})-(\ref{3.4})) are analytic and have faster temporal decay rates than the leading terms. The main difficulty focuses on the interaction of the singular Riesz operators (the first and second-order) in low frequency, the wave operators and the heat kernels. We merely take a few typical leading terms for instances to elaborate on the avoidance of the seeming singularity at $\xi=0$ and derive different wave patterns in the low frequency based on suitable combinations and cancellations. 

To facilitate explanation, we denote $\hat{\mathbf{w}}_t=\cos({\rm c}|\xi|t)$ and $\hat{\mathbf{w}}=\frac{\sin({\rm c}|\xi|t)}{|\xi|}$ are the Fourier transform of wave operators, where ${\rm c}$ is the propagation speed of the bulk in Lemma \ref{l 2.1}. Then, we first reformulate the following typical term in $\hat{G}_{12}^l$:
\begin{equation}\label{3.5}
\begin{array}{rl}
&\frac{e^{\lambda_1t}-e^{\lambda_2t}}{2}\frac{\xi^T}{|\xi|}=\frac{\xi^T}{|\xi|}e^{-b_1|\xi|^2t+\mathcal{O}(|\xi|^4)t}\sin({\rm c}|\xi|t+|\xi|\beta(|\xi|^2)t)\\[2mm]
=&\xi^T e^{-b_1|\xi|^2t+\mathcal{O}(|\xi|^4)t}\Big\{\sin({\rm c}|\xi|t)\cos(|\xi|\beta(|\xi|^2)t)+\cos({\rm c}|\xi|t)\sin(|\xi|\beta(|\xi|^2)t)\Big\}\\[2mm]
=&\xi^T\hat{\mathbf{w}}e^{-b_1|\xi|^2t+\mathcal{O}(|\xi|^4)t}+\xi^T\hat{\mathbf{w}}(\cos(|\xi|\beta(|\xi|^2)t)-1)e^{-b_1|\xi|^2t+\mathcal{O}(|\xi|^4)t}\\[2mm]
&+\xi^T\hat{\mathbf{w}}_t\frac{\sin(|\xi|\beta(|\xi|^2)t)}{|\xi|}e^{-b_1|\xi|^2t+\mathcal{O}(|\xi|^4)t},
\end{array}
\end{equation}
where $b_1=\frac{\beta_1\nu^++\beta_4\nu^-}{2(\beta_1+\beta_4)}>0$ and  $\beta(\cdot)$ is an analytic function.

Recall the estimates given in Li \cite{ld} for any $N>0$ that: 
\begin{equation}
\begin{array}{rl}
&\Big|\mathcal{F}^{-1}\Big((\cos(|\xi|\beta(|\xi|^2)t)-1)e^{-b_1|\xi|^2t+\mathcal{O}(|\xi|^4)t}\Big)\Big|+\Big|\mathcal{F}^{-1}(\frac{\sin(|\xi|\beta(|\xi|^2)t)}{|\xi|}e^{-b_1|\xi|^2t+\mathcal{O}(|\xi|^4)t})\Big|\\[2mm]
\lesssim &(1+t)^{-\frac{3}{2}}\big(1+\frac{|x|^2}{1+t}\big)^{-N},
\end{array}
\end{equation}
which together with  Kirchhoff formula in dimension three give that
\begin{equation}\label{3.5(0)}
\Big|\partial_x^\alpha\mathcal{F}^{-1}\Big(\frac{e^{\lambda_1t}-e^{\lambda_2t}}{2}\frac{\xi^T}{|\xi|}\Big)\Big|\lesssim (1+t)^{-\frac{4+|\alpha|}{2}}\Big(1+\frac{(|x|-{\rm c}t)^2}{1+t}\Big)^{-N}.
\end{equation}

Next, we study the first typical entry $G_{22}$, which is different from that of the one-phase fluid model (the compressible Navier-Stokes model) in \cite{Ls,Lw}, where 
\begin{equation}\label{3.5(1)}
\begin{array}{rl}
\hat{G}_{22}^l=&\frac{e^{\lambda_1t}+e^{\lambda_2t}}{2}\frac{\xi\xi^T}{|\xi|^{2}}-e^{-|\xi|^2t}\frac{\xi\xi^T}{|\xi|^{2}}+\cdots\\[2mm]
\sim &\underbrace{(\hat{\mathbf{w}}_t-1)\frac{\xi\xi^T}{|\xi|^{2}}e^{-b_1|\xi|^2t}}_{{\rm Riesz\ wave\shortbar I}}+\underbrace{\frac{\xi\xi^T}{|\xi|^{2}}(e^{-b_1|\xi|^2t}-e^{-|\xi|^2t})}_{{\rm Riesz\ wave\shortbar I\!I}}+\cdots.
\end{array}
\end{equation}
We use $\hat{R}_i$ with $i=1,2$ to denote the Riesz wave-I and Riesz wave-I\!I, respectively. As in Liu-Noh \cite{Ls} and Li \cite{ld} for the isentropic and non-isentropic compressible Navier-Stokes system, their pointwise space-time descriptions hold for an arbitrarily large integer $N$ that
\begin{align}\label{3.6}
\begin{array}{rl}
&|\partial_x^\alpha(\mathcal{F}^{-1}(\hat{R_1}))|\lesssim (1+t)^{-\frac{3+|\alpha|}{2}}\big(1+\frac{|x|^2}{1+t}\big)^{-\frac{3+|\alpha|}{2}}+(1+t)^{-\frac{4+|\alpha|}{2}}\big(1+\frac{(|x|-{\rm c}t)^2}{1+t}\big)^{-N},\\
&|\partial_x^\alpha(\mathcal{F}^{-1}(\hat{R_2}))|\lesssim (1+t)^{-\frac{3+|\alpha|}{2}}\big(1+\frac{|x|^2}{1+t}\big)^{-N}.
\end{array}
\end{align}

Due to the complicity of the two-phase fluid model, its entry $G_{22}$ in low frequency only can be rewritten as 
\begin{equation}\label{3.7}
\begin{array}{rl}
\hat{G}_{22}^l=&a_1\frac{e^{\lambda_1t}+e^{\lambda_2t}}{2}\frac{\xi\xi^T}{|\xi|^{2}}-a_2e^{-|\xi|^2t}\frac{\xi\xi^T}{|\xi|^{2}}+\cdots\ \ \ \ \ \ \ \ \ \ (a_1\neq a_2)\\[2mm]
\sim &\underbrace{(\hat{\mathbf{w}}_t-1)\frac{\xi\xi^T}{|\xi|^{2}}e^{-b_1|\xi|^2t}}_{{\rm Riesz\ wave\shortbar I}}+\underbrace{\frac{\xi\xi^T}{|\xi|^{2}}e^{-|\xi|^2t}}_{{\rm Riesz\ wave\shortbar I\!I\!I}}+\cdots,
\end{array}
\end{equation}
and we also use $\hat{R}_3$ to denote the Riesz wave-I\!I\!I. Although Riesz wave-I\!I\!I is different from Riesz wave-I\!I, we can also get the following according Lemma \ref{A.6}:
\begin{align}\label{3.8}
\begin{array}{rl}
&|\partial_x^\alpha(\mathcal{F}^{-1}(\hat{R_3}))|\lesssim (1+t)^{-\frac{3+|\alpha|}{2}}\big(1+\frac{|x|^2}{1+t}\big)^{-\frac{3+|\alpha|}{2}}.
\end{array}
\end{align}

Next, we turn to the Riesz wave-I\!V (like $|\xi|^{-1}e^{-|\xi|^2t}$ in Fourier space) in $G_{12}^l,G_{14}^l,G_{32}^l,G_{34}^l$, which is denoted by $R_4$ and has been studied by the first author and the second author in the present paper for the unipolar compressible Navier-Stokes-Poisson system in \cite{Wang}. In particular, 
\begin{align}\label{3.9}
\begin{array}{rl}
&|\partial_x^\alpha(\mathcal{F}^{-1}(\hat{R_4}))|\lesssim (1+t)^{-\frac{2+|\alpha|}{2}}\big(1+\frac{|x|^2}{1+t}\big)^{-\frac{2+|\alpha|}{2}}.
\end{array}
\end{align}
One can see that the Riesz wave-I\!V has the worst decay for both temporal and spacial variables. Different from the difficulty in \cite{Wang}, where its Green's function does not contain the Huygens waves, that is, the interaction of the Riesz wave-IV and the Huygens wave will not happen. However, due to the complication of the two-phase fluid model (\ref{1.1}), we have to deal with the nonlinear coupling of these two waves (at least). In fact, one of the large difficulty in the present paper is to derive its sharp estimate. See the details in the proof of $K_4$ in Lemma \ref{A.5}.

The second difficulty is arising from the non-conservative structure of the system (\ref{1.1}) due to two pressure terms and the presence of the Huygens waves in the low frequency of Green's function. In fact, in existing research works in this direction rely heavily on the nonlinear convolutions in Lemma \ref{A.4}, and these estimates are sharp in some sense. On the other hand, one can hardly find two kinds of linear combination to compensate this non-conservation as the previous work in this direction for the other related compressible fluid models. To achieve the relatively sharp pointwise space-time description of the solution to the nonlinear problem, we have to construct a series of new nonlinear convolution estimates of different wave patterns. See the details in Lemma \ref{A.5}.

In a conclusion, we have the pointwise estimate for Green's function in the low frequency:
\begin{lemma}\label{l 3.1} For $|\alpha|\geq0$, it holds that
\begin{align}\label{3.10}
&|\partial_x^\alpha(G_{11}^{l},G_{13}^{l}, G_{21}^{ l},G_{23}^{ l},G_{31}^{ l},G_{33}^{ l},G_{34}^{ l},G_{41}^{l},G_{42}^{l},G_{43}^{l},G_{44}^{l})|\nonumber\\
\lesssim& (1+t)^{-\frac{3+|\alpha|}{2}}\Big(1+\frac{|x|^2}{1+t}\Big)^{-N}+(1+t)^{-\frac{4+|\alpha|}{2}}\Big(1+\frac{(|x|-{\rm c}t)^2}{1+t}\Big)^{-N},\ \ \ 
\end{align}
\begin{align}\label{3.11}
|\partial_x^\alpha (G_{22}^{l},G_{24}^{l})|\lesssim(1+t)^{-\frac{3+|\alpha|}{2}}\Big(1+\frac{|x|^2}{1+t}\Big)^{-\frac{3+|\alpha|}{2}}+(1+t)^{-\frac{4+|\alpha|}{2}}\Big(1+\frac{(|x|-{\rm c}t)^2}{1+t}\Big)^{-N},
\end{align}
\begin{align}\label{3.12(1)}
|\partial_x^\alpha(G_{12}^{l},G_{14}^{l},G_{32}^{l},G_{34}^{l})|\lesssim (1+t)^{\!-\frac{2+|\alpha|}{2}}\Big(1\!+\!\frac{|x|^2}{1+t}\Big)^{\!-\frac{2+|\alpha|}{2}}\!\!+\!(1+t)^{\!-\frac{4+|\alpha|}{2}}\Big(1\!+\!\frac{(|x|\!-\!{\rm c}t)^2}{1+t}\Big)^{\!-N},
\end{align}
and a refined estimate for the following linear combination
\begin{align}\label{3.12(1)}
&|\partial_x^\alpha(\bar\rho^-G_{12}^{l}  +\bar\rho^+G_{32}^{l},\bar\rho^-G_{14}^{l}  +\bar\rho^+G_{34}^{l})|=\mathcal{O}(1)|\partial_x^\alpha(\beta_1G_{12}^{l}+\beta_2G_{12}^{l},\beta_1G_{14}^{l}+\beta_2G_{34}^{l})|\nonumber\\
\lesssim &(1+t)^{\!-\frac{3+|\alpha|}{2}}\Big(1\!+\!\frac{|x|^2}{1+t}\Big)^{\!-\frac{3+|\alpha|}{2}}\!\!+\!(1+t)^{\!-\frac{4+|\alpha|}{2}}\Big(1\!+\!\frac{(|x|\!-\!{\rm c}t)^2}{1+t}\Big)^{\!-N},
\end{align}
where the constant $N>0$ can be arbitrarily large.
\end{lemma}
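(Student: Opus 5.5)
The plan is to work entry by entry from the low-frequency representations (\ref{3.1})--(\ref{3.4}). In each entry I split the multiplier into a \emph{wave part} built from $e^{\lambda_1t}\pm e^{\lambda_2t}$ and a \emph{diffusive part} built from $e^{\lambda_3t},e^{\lambda_4t}$ and the incompressible heat kernels $e^{-\nu_1^\pm|\xi|^2t}$. The two parts are treated separately and the bounds added at the end.

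\textbf{Wave part.} I use Lemma \ref{l 2.1} and the decomposition (\ref{3.5}). This writes each such term as $\hat{\mathbf{w}}_t$ or $|\xi|\hat{\mathbf{w}}$, times a homogeneous symbol of degree $0$ ($1$, $\xi^T/|\xi|$ or $\xi\xi^T/|\xi|^2$), times a heat multiplier $e^{-b_1|\xi|^2t+\mathcal O(|\xi|^4)t}$. The remainders involve $(\cos(|\xi|\beta t)-1)$ and $\sin(|\xi|\beta t)/|\xi|$, which are bounded by Li's estimate.

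\begin{itemize}
\item When the degree-$0$ symbol is $1$, the Kirchhoff formula combined with the heat kernel gives $(1+t)^{-\frac{4+|\alpha|}{2}}(1+\frac{(|x|-{\rm c}t)^2}{1+t})^{-N}$ plus diffusive remainders of order $(1+t)^{-\frac{3+|\alpha|}{2}}(1+\frac{|x|^2}{1+t})^{-N}$. This is exactly as in (\ref{3.5(0)}).
\item When the symbol is a genuine Riesz symbol $\xi\xi^T/|\xi|^2$, I follow (\ref{3.7}): subtract and add the same Riesz symbol times a heat kernel. This produces Riesz wave-I plus a Riesz wave-III. I then invoke (\ref{3.6}) and (\ref{3.8}); this step explains the algebraic tail $(1+\frac{|x|^2}{1+t})^{-\frac{3+|\alpha|}{2}}$ in (\ref{3.11}) for $G_{22}^l,G_{24}^l$.
\end{itemize}

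\textbf{Diffusive part.} For entries whose $\lambda_{3,4}$-coefficients are $\mathcal O(1)$ constants, the term is a heat kernel plus analytic corrections. These give the $(1+\frac{|x|^2}{1+t})^{-N}$ tails in (\ref{3.10}). Since $\mathrm{Re}\,\tilde\lambda_{3,4}<0$ in Case 1 (and similarly in Case 2 via (\ref{2.30})), I bound them by Gaussian-type estimates after absorbing the $\mathcal O(|\xi|^4)t$ corrections.

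For $G_{12},G_{14},G_{32},G_{34}$ the coefficients carry $|\xi|^{-1}$, multiplied by $\xi^T/|\xi|$. Their combination is $\frac{\xi^T}{|\xi|^2}(e^{\lambda_3t}+e^{\lambda_4t})$, i.e.\ $\nabla(-\Delta)^{-1}$ applied to a heat kernel. This is Riesz wave-IV, and (\ref{3.9}) gives the profile $(1+t)^{-\frac{2+|\alpha|}{2}}(1+\frac{|x|^2}{1+t})^{-\frac{2+|\alpha|}{2}}$. The remainders of the eigenvalue and projector expansions contribute one more power of $|\xi|$, hence faster decay.

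\textbf{Refined combination and main obstacle.} The refined estimate for $\bar\rho^-G_{12}^l+\bar\rho^+G_{32}^l$ is the delicate step. From (\ref{1.8}), $\bar\rho^-:\bar\rho^+=\beta_1:\beta_2$ up to a common factor, and $\beta_1\beta_4=\beta_2^2$. In (\ref{2.15})--(\ref{2.16}) the $|\xi|^{-1}$ entries in column 2 are $-\beta_4/(\mathcal R|\xi|)$ (row 1) and $\beta_2/(\mathcal R|\xi|)$ (row 3). Hence $\beta_1(-\beta_4)+\beta_2\beta_2=0$, and the same cancellation holds in column 4 with entries $\beta_2,-\beta_1$.

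So the singular leading terms cancel exactly, and the combined symbol is at worst $\mathcal O(1)\cdot\frac{\xi^T}{|\xi|}(e^{\lambda_3t}-e^{\lambda_4t})$ plus regular terms. That is a first-order Riesz operator on a heat kernel, bounded again by Lemma \ref{A.6} by $(1+t)^{-\frac{3+|\alpha|}{2}}(1+\frac{|x|^2}{1+t})^{-\frac{3+|\alpha|}{2}}$, with the wave part handled as above.

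The main obstacle I expect is verifying that this cancellation persists at the next order of the projector expansion. The $\mathcal O(|\xi|)$ corrections to $P^3,P^4$ multiplied by $|\xi|^{-1}$ must produce only $\mathcal O(1)$ symbols, not $|\xi|^{-1}$ ones. I would check this by computing $P^3,P^4$ to one more order from $P^i=\prod_{j\ne i}(\mathcal A-\lambda_j)/(\lambda_i-\lambda_j)$, using the left null structure $(\beta_1,\beta_2)$. In Case 2 the same cancellation is visible directly in the entries of (\ref{2.33}).
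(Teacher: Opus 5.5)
Your proposal follows essentially the paper's own route. The wave parts of \eqref{3.1}--\eqref{3.4} go through \eqref{3.5}, Kirchhoff's formula and Li's estimates; the diffusive parts go through the Riesz-wave bounds \eqref{3.6}, \eqref{3.8}, \eqref{3.9}; and the refined estimate comes from the cancellation $-\beta_1\beta_4+\beta_2^2=0$ of the $|\xi|^{-1}$ entries in rows 1 and 3 of $P^3,P^4$.

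Your next-order worry is harmless. Conjugating $\mathcal{A}(\xi)$ by $\mathrm{diag}(1,-1,1,-1)$ replaces $|\xi|$ by $-|\xi|$, so the entries in rows 1, 3 and columns 2, 4 of $P^3,P^4$ are odd in $|\xi|$. Once the $|\xi|^{-1}$ parts cancel, the remaining diffusive symbol is therefore of the form $\xi^T h(|\xi|^2)e^{\lambda_{3,4}t}$, which is better than needed.

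Two points you inherit from the paper should be corrected.
\begin{itemize}
\item The $|\xi|^{-1}$ terms really come with $e^{\lambda_3t}-e^{\lambda_4t}$, not $e^{\lambda_3t}+e^{\lambda_4t}$; note the overall minus sign of $P^4$ in \eqref{2.16}. This only improves your Riesz wave-IV bound.
\item Your ``heat kernel plus analytic corrections'' step does not apply to $G_{42},G_{44}$. These entries carry $\xi\xi^T/|\xi|^2$, and $G_{44}$ also carries $I-\xi\xi^T/|\xi|^2$, so they satisfy only \eqref{3.11}, not \eqref{3.10}. Likewise $G_{34}$, being a $|\xi|^{-1}$ entry, cannot satisfy \eqref{3.10}.
\end{itemize}
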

We emphasize again that the estimate (\ref{3.12(1)}) is crucial for us to gain the desired, sharp estimate concerning the nonlinear convolution of the Riesz wave-I\!V and the Huygens wave in Lemma \ref{A.5}.

\vspace{5mm}
Combining Lemma \ref{l 3.1}, Remark \ref{r 2.1} and Lemma \ref{l 2.5}, we conclude the following.
\begin{proposition}{\rm [\textbf{Space-time\ description\ of\ Green\ function}]}\label{l 3.3}\\
For $|\alpha|\geq0$, one has the following for a arbitrarily large constant $N>0$:
\begin{align}\label{3.17}
&When\ (i,j)=(1,1),(1,3),(2,3),(3,1),(3,3),(4,1),(4,2),(4,4),\nonumber\\
&\ \ \ |\partial_x^\alpha(G_{ij}\!-\!G_{S1})|\lesssim (1+t)^{-\frac{3+|\alpha|}{2}}\Big(1+\frac{|x|^2}{1+t}\Big)^{-N}+(1+t)^{-\frac{4+|\alpha|}{2}}\Big(1+\frac{(|x|-{\rm c}t)^2}{1+t}\Big)^{-N},
\end{align}
\begin{align}\label{3.18}
\!\!\!\!|\partial_x^\alpha (G_{21}\!-\!G_{S2},G_{43}\!-\!G_{S2})|\lesssim(1+t)^{\!-\frac{3+|\alpha|}{2}}\Big(1\!+\!\frac{|x|^2}{1+t}\Big)^{\!-\frac{3+|\alpha|}{2}}\!+\!(1+t)^{\!-\frac{4+|\alpha|}{2}}\Big(1\!+\!\frac{(|x|\!-\!{\rm c}t)^2}{1+t}\Big)^{\!-N}\!\!,\!\!
\end{align}
\begin{align}\label{3.18}
\!\!\!\!|\partial_x^\alpha (G_{22}\!-\!G_{S1},G_{24}\!-\!G_{S1})|\lesssim(1\!+\!t)^{-\frac{3+|\alpha|}{2}}\Big(1\!+\!\frac{|x|^2}{1+t}\Big)^{\!-\frac{3+|\alpha|}{2}}\!+\!(1+t)^{\!-\frac{4+|\alpha|}{2}}\Big(1+\frac{(|x|\!-\!{\rm c}t)^2}{1+t}\Big)^{\!-N}\!\!,\!\!
\end{align}
\begin{align}\label{3.19}
|\partial_x^\alpha(G_{12}\!-\!G_{S1},G_{14}-G_{S1},G_{32}\!-\!G_{S1},G_{34}\!-\!G_{S1})|\ \ \ \ \ \ \ \ \ \ \ \ \ \ \ \ \ \ \ \ \ \ \ \ \ \ \ \ \ \ \ \ \ \ \ \ \ \ \ \ \ \  \ \ \ \ \ \nonumber\\
\lesssim (1+t)^{-\frac{2+|\alpha|}{2}}\Big(1+\frac{|x|^2}{1+t}\Big)^{\!-\frac{2+|\alpha|}{2}}+(1+t)^{-\frac{4+|\alpha|}{2}}\Big(1+\frac{(|x|-{\rm c}t)^2}{1+t}\Big)^{\!-N},
\end{align}
where the singular terms $G_{S1}=e^{-t/C}t^{-\frac{3+|\alpha|}{2}}\Big(1+\frac{|x|^2}{t}\Big)^{-N}$ and $G_{S2}=e^{-t/C}t^{-\frac{4+|\alpha|}{2}}\Big(1+\frac{|x|^2}{t}\Big)^{-N}$.
\end{proposition}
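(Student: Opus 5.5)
The plan is to split Green's function by frequency, $G=G^l+G^m+G^h$, using smooth cutoffs $\chi_1(\xi)$ supported in $|\xi|\le\eta_1$, $\chi_3(\xi)$ supported in $|\xi|\ge K$, and $\chi_2=1-\chi_1-\chi_3$. Each piece is then estimated by the results already established. This is done for both the compressible part $\tilde G$ and the incompressible part (the two heat kernels \eqref{2.8(1)}); the two parts are recombined through the Hodge projections $\frac{\xi\xi^T}{|\xi|^2}$ and $I-\frac{\xi\xi^T}{|\xi|^2}$.

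\textbf{Low and middle frequencies.} Lemma \ref{l 3.1} gives every entry $G^l_{ij}$ directly, including the Riesz wave-I\!V entries $G^l_{12},G^l_{14},G^l_{32},G^l_{34}$ in \eqref{3.19} and the Riesz wave-I/I\!I\!I entries $G^l_{22},G^l_{24}$. The projections $\frac{\xi\xi^T}{|\xi|^2}$ restricted to the support of $\chi_1$ are exactly what produces the Riesz waves already accounted for there.

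For the middle part, Lemma \ref{l 2.5} gives $|\partial_x^\alpha G^m|\lesssim e^{-bt}(1+|x|^2)^{-N}$. This is bounded by both $(1+t)^{-M}(1+\frac{|x|^2}{1+t})^{-N}$ and the H-wave profile, for any $M$: when $t$ is large, exponential decay beats the algebraic weights, because $(1+\frac{|x|^2}{1+t})^{-N}\ge (1+|x|^2)^{-N}$. The heat-kernel part of the incompressible component, localized to middle frequencies, is handled the same way.

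\textbf{High frequency.} Lemma \ref{l 2.2(1)} provides
\[
e^{-t/C}t^{-\frac{3+|\alpha|}{2}}\Big(1+\frac{|x|^2}{t}\Big)^{-N}\quad\text{for generic entries},\qquad e^{-t/C}t^{-\frac{4+|\alpha|}{2}}\Big(1+\frac{|x|^2}{t}\Big)^{-N}\quad\text{for }(2,1),(4,3).
\]
These are precisely $G_{S1}$ and $G_{S2}$, and Remark \ref{r 2.1} extends them to the full $G$. This is why $G_{S2}$ is subtracted exactly in \eqref{3.18} for $G_{21},G_{43}$, and $G_{S1}$ elsewhere.

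Strictly, the high-frequency piece is not smooth at $t=0$: it contains the $\delta$-type singular part coming from $G|_{t=0}=\delta_0 I$ together with singular derivatives. The subtraction of $G_{S1}$ and $G_{S2}$ should be read as "$G_{ij}$ minus a term bounded by $G_{S1}$, respectively $G_{S2}$." I would make this precise by extracting the leading symbols of $P^i(\xi)e^{\lambda_i t}$ at high frequency from \eqref{2.35}–\eqref{2.36}. These are parabolic kernels $e^{-c|\xi|^2t}$ multiplied by homogeneous symbols of orders $0$ and $1$, which yields the $t^{-3/2}$ and $t^{-2}$ heat-type singularities. The remainder is bounded as in the real-analysis method of \cite{Wang2}.

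\textbf{Assembly and main obstacle.} Summing the three pieces entry by entry gives \eqref{3.17}–\eqref{3.19}. The low-frequency profiles dominate the middle-frequency pieces and the remainders of the high-frequency pieces.

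The main obstacle is the high-frequency step. Obtaining the exact $t$-power in the short-time singularity for each entry requires tracking the orders in \eqref{2.36}: the $\mathcal O(|\xi|)$ entries at positions $(2,1)$ and $(4,3)$ are what cost the extra $t^{-1/2}$. The capillary terms give $\lambda\sim-|\xi|^2$ with possibly complex roots; for those I would use $\mathrm{Re}\lambda\le -c|\xi|^2$ and apply the same Gaussian-type bounds, with the critical double-root case handled as in Case 2 of the low-frequency analysis.
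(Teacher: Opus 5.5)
Your proposal is correct and is essentially the paper's own argument. The paper obtains Proposition~\ref{l 3.3} in a single line by combining three ingredients: the low-frequency bounds of Lemma~\ref{l 3.1}; the high-frequency bounds of Lemma~\ref{l 2.2(1)} and Remark~\ref{r 2.1}, which are precisely $G_{S1}$, and $G_{S2}$ for the $\mathcal O(|\xi|)$ entries $(2,1),(4,3)$; and the middle-frequency bound of Lemma~\ref{l 2.5}. Your remarks on absorbing $e^{-bt}(1+|x|^2)^{-N}$ and on the short-time singularity only make explicit what the paper leaves implicit.

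One caveat applies equally to the paper, since it comes from Lemma~\ref{l 3.1}. By \eqref{3.4}, $\hat G^l_{42}$ and $\hat G^l_{44}$ carry the Riesz wave-I component $\frac{\xi\xi^T}{|\xi|^2}(\cos({\rm c}|\xi|t)-1)e^{-b_1|\xi|^2t}$ with nonzero coefficients $\frac{\beta_2}{\beta_1+\beta_4}$ and $\frac{\beta_4}{\beta_1+\beta_4}$, just as $\hat G^l_{24}$ and $\hat G^l_{22}$ do. This component is of size $|x|^{-3}$ for $\sqrt{1+t}\ll|x|\le\frac{{\rm c}t}{2}$. So for $(i,j)=(4,2),(4,4)$ one can only justify the bound stated for $G_{22}-G_{S1}$, $G_{24}-G_{S1}$, not the $(1+\frac{|x|^2}{1+t})^{-N}$ bound in \eqref{3.17}. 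This is harmless downstream, since \eqref{5.3} only uses the $(1+\frac{|x|^2}{1+t})^{-\frac{3}{2}}$ profile for the momenta.
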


\section{Pointwise estimates for nonlinear system}

\quad\quad In this section, by using the representation of the solution, we derive the pointwise estimates for the solution of the nonlinear system based on the pointwise space-time description of Green's function and the convolution estimates of nonlinear coupling. 

Without loss of generality, we use $\partial_x^k$ to denote $\partial_x^\alpha$ with $|\alpha|=k$.
By using Duhamel principle, we can get the representation of the solution $(n^+,\mathbf{m}^+,n^-,\mathbf{m}^-)$ for the nonlinear problem (\ref{2.1}):
\begin{eqnarray}\label{5.1}
 &&\partial_x^k(n^+,\mathbf{m}^+,n^-,\mathbf{m}^-)^T\nonumber\\
 &=&\partial_x^k G(x,t)\ast_x(n^+_0,\mathbf{m}^+_0,n^-_0,\mathbf{m}^-_0)^T
          +\int_0^t\partial_x^k G(\cdot,t-\tau)\ast_x(0,F_1,0,F_2)^T(\cdot,\tau)d\tau,
\end{eqnarray}
where the nonlinear terms $F_1$ and $F_2$ are defined in (\ref{2.2}).

\textbf{Initial propagation.} Use $(\breve{n}^+,\breve{\mathbf{m}}^+,\breve{n}^-,\breve{\mathbf{m}}^-)$ to denote the linear part of the solution in (\ref{5.1}). According to the representation (\ref{5.1}), Proposition \ref{l 3.3}, the initial condition (\ref{1.3}) and the convolution estimate in Lemma \ref{A.4} for the initial propagation, we can immediately get the following:
\begin{equation}\label{5.2}
\begin{array}{rl}
 &\left|\partial_x^k\left(\!\!
            \begin{array}{ccc}
              \breve{n}^+ \\
              \breve{n}^-
            \end{array}
          \!\!\right)\right|=\!\left|\partial_x^k\!\left(\!
                            \begin{array}{cccc}
                              G_{11} & G_{12} & G_{13} &G_{14}\\
                               G_{31} & G_{32} & G_{33} &G_{34}\\
                            \end{array}
                          \!\right)\!\!\ast_x(n^+_0,\mathbf{m}^+_0,n^-_0,\mathbf{m}^-_0)^T
          \right|\\
          \leq\!\! & \left|\partial_x^k\!\bigg(\!
                            \begin{array}{cccc}
                             G_{11}-G_{S1} & G_{12}-G_{S1} & G_{13}-G_{S1} & G_{14}-G_{S1}\\
                               G_{31}-G_{S1} & G_{32}-G_{S1} & G_{33}-G_{S1} & G_{34}-G_{S2}\\
                            \end{array}
                          \!\bigg)\!\!\ast_x (n^+_0,\mathbf{m}^+_0,n^-_0,\mathbf{m}^-_0)^T\right|\\
          &+\left| \left(\!
                            \begin{array}{cccc}
                              G_{S1} &G_{S1} & G_{S1} &  G_{S1}\\
                              G_{S1} & G_{S1} & G_{S1} & G_{S1}\\
                            \end{array}
                          \!\right)\!\ast_x\partial_x^k (n^+_0,\mathbf{m}^+_0,n^-_0,\mathbf{m}^-_0)^T\right|\\
          \leq &\displaystyle C\epsilon\bigg((1+t)^{-\frac{2+k}{2}}\Big(1+\frac{|x|^2}{1+t}\Big)^{-\frac{2+k}{2}}
          +(1+t)^{-\frac{4+k}{2}}\Big(1+\frac{(|x|-{\rm c}t)^2}{1+t}\Big)^{-1}\bigg),
\end{array}
\end{equation}
\begin{equation}\label{5.3}
\begin{array}{rl}
 &\left|\partial_x^k\left(\!\!
            \begin{array}{ccc}
              \breve{\mathbf{m}}^+ \\
              \breve{\mathbf{m}}^-
            \end{array}
          \!\!\right)\right|=\!\left|\partial_x^k\!\left(\!
                            \begin{array}{cccc}
                              G_{21} & G_{22} & G_{23} &G_{24}\\
                               G_{41} & G_{42} & G_{43} &G_{44}\\
                            \end{array}
                          \!\right)\!\!\ast_x (n^+_0,\mathbf{m}^+_0,n^-_0,\mathbf{m}^-_0)^T\right|\\
          \leq\!\! & \left|\partial_x^k\!\left(\!
                            \begin{array}{cccc}
                             G_{21}-G_{S2} & G_{22}-G_{S1} & G_{23}-G_{S1} &G_{24}-G_{S1}\\
                               G_{41}-G_{S1} & G_{42}-G_{S1} & G_{43}-G_{S2} &G_{44}-G_{S1}\\
                            \end{array}
                          \!\right)\!\!\ast_x (n^+_0,\mathbf{m}^+_0,n^-_0,\mathbf{m}^-_0)^T\right|\\
          &+\left| \bigg(\!
                            \begin{array}{cccc}
                              \uwave{G_{S2}} & G_{S1} & G_{S1} & G_{S1}\\
                              G_{S1} & G_{S1} & \uwave{G_{S2}} &G_{S1}
                            \end{array}
                          \!\bigg)\!\ast_x(\uwave{\partial_x^{k+1}n_{0}^+},\partial_x^k \mathbf{m}_0^+,\uwave{\partial_x^{k+1}n_{0}^-},\partial_x^k \mathbf{m}_{0}^-)^T\right|\\
          \leq &\displaystyle C\epsilon\bigg((1+t)^{-\frac{3+k}{2}}\Big(1+\frac{|x|^2}{1+t}\Big)^{-\frac{3}{2}}
          +(1+t)^{-\frac{4+k}{2}}\Big(1+\frac{(|x|-{\rm c}t)^2}{1+t}\Big)^{-1}\bigg).
\end{array}
\end{equation}
Note that it requires one higher order regularity assumption on the initial value of the fraction densities $(n_{0}^+,n_{0}^-)$, which is consistent with the standard energy estimates for the compressible Naiver-Stokes-Korteweg system.
\begin{remark}It seems that we can only obtain the corresponding estimate by replacing the decay $(1+t)^{-a}$ in (\ref{5.2})-(\ref{5.3}) with $t^{-a}$, since $G_{S1}$ and $G_{S2}$ arising from the high frequency of Green's function are singular at $t=0$. Nevertheless, similar to the study on the initial propagation of the one-phase fluid model in Proposition 6.3 of \cite{Ls} by considering two subcases $0\leq t<1$ and $t\geq1$, one can easily achieve the above two estimates of initial propagation for the two-phase fluid model when $t\geq0$. We omit the details here since it has no essential impact on the conclusion.
\end{remark}

\textbf{Nonlinear Coupling.} According to the above initial propagation, we give the ansatz for the nonlinear problem with $k\leq3$:
\begin{eqnarray}\label{5.4}
|(n^+,n^-)|\leq 2C\epsilon\bigg\{(1+t)^{-1}\Big(1+\frac{|x|^2}{1+t}\Big)^{-1}
          +(1+t)^{-\frac{3}{2}}\Big(1+\frac{(|x|-{\rm c}t)^2}{1+t}\Big)^{-1}\bigg\},\ \ \ \ \ \ \ \ \ \ \ \ \ \ \ \ \ \ \ \ 
\end{eqnarray}
\vspace{-4mm}
\begin{eqnarray}\label{5.5}
&&|\bar\rho^-n^++\bar\rho^+n^-|+|\partial_x^k(n^+,n^-)|\nonumber\\
&&\ \ \ \ \ \ \ \ \ \ \ \leq 2C\epsilon\bigg\{(1+t)^{-\frac{3}{2}}\Big(1+\frac{|x|^2}{1+t}\Big)^{-1}
          +(1+t)^{-2}\Big(1+\frac{(|x|-{\rm c}t)^2}{1+t}\Big)^{-1}\bigg\},\ \ k=1,2,3, \ \ \ \ \ \ \ \ \ \ 
\end{eqnarray}
\begin{equation}\label{5.6}
|\partial_x^k (\mathbf{m}^+,\mathbf{m}^-)|\leq 2C\epsilon\bigg\{(1+t)^{-\frac{3}{2}}\Big(1+\frac{|x|^2}{1+t}\Big)^{-1}
          +(1+t)^{-2}\Big(1+\frac{(|x|-{\rm c}t)^2}{1+t}\Big)^{-1}\bigg\},\ k=0,1,2,3.\ \ 
\end{equation}

\begin{remark}In order to meet the minimal requirements for deriving the desired results from this convolution concerning Riesz wave-I\!V, we must extract improved decay rates for the pressure terms $\alpha^\pm\nabla P$. Since the other terms except two pressure terms $\alpha^\pm\nabla P$ are the divergence form (conservative structure), we mainly focus on the nonlinear estimates concerning the pressure terms $\alpha^\pm\nabla P$. 
At first glance, $\alpha^\pm\nabla P \sim  n^-\nabla n^+ + n^+\nabla n^- +n^+\nabla n^+ + n^-\nabla n^-$, then according to the above ansatz and the fact the entries $G_{12}$ in Green's function corresponding to the unknowns $n^\pm$ contain the Riesz wave-I\!V with worst decay on both temporal and spacial variables, one will meet the following nonlinear convolution of the Riesz wave-I\!I and the nonlinear Huygens waves:
\begin{eqnarray}
\mathcal{N}_1= \int_0^t\!\int_{\mathbb{R}^3}
(1+t-\tau)^{-1}\Big(1+\frac{|x-y|^2}{1+t-\tau}\Big)^{-1}(1+\tau)^{-\frac{7}{2}}\Big(1+\frac{(|y|-{\rm c}\tau)^2}{1+\tau}\Big)^{-2}dyd\tau.\label{5.7}
\end{eqnarray}
When estimating this convolution, one should deal with the domains $D_1=\{|x|\leq \sqrt{1+t}\}$ and $D_2=\{||x|-{\rm c}t|\leq \sqrt{1+t}\}$, and the estimates in these two domains are direct and simplest. However, even for $(x,t)\in D_1$, one should at least consider two subcases: $0\leq \tau\leq\frac{t}{2}$ and $\frac{t}{2}\leq\tau\leq t$ denoted by $\mathcal{N}_{11}$ and $\mathcal{N}_{12}$, respectively. Thus,
\begin{eqnarray}\label{5.8}
\mathcal{N}_{12}&=&\int_0^\frac{t}{2}\!\int_{\mathbb{R}^3}(1+t-\tau)^{-1}\Big(1+\frac{|x-y|^2}{1+t-\tau}\Big)^{-1}(1+\tau)^{-\frac{7}{2}}\Big(1+\frac{(|y|-{\rm c}\tau)^2}{1+\tau}\Big)^{-2}dyd\tau\nonumber\\
&\lesssim& (1+t)^{-1}\int_0^{\frac{t}{2}}(1+\tau)^{-\frac{7}{2}}(1+\tau)^{\frac{5}{2}}d\tau\nonumber\\
&\lesssim& (1+t)^{-1}\ln(1+t)\lesssim (1+t)^{-1}\ln(1+t)\bigg(1+\frac{|x|^2}{1+t}\bigg)^{-N},\label{5.8}
\end{eqnarray}
which is obviously weaker than the ansatz (\ref{5.4}) on the unknowns $n^\pm$. \end{remark}

To resolve this difficulty, we have to find out a refined decay estimate of the nonlinear terms $Q_1=(\beta_1\nabla n^++\beta_2\nabla n^-)-\alpha^+\nabla P$ and $Q_2=(\beta_3\nabla n^++\beta_4\nabla n^-)-\alpha^-\nabla P$ in $F_1$ and $F_2$ arising from two pressure terms. To this end, after a careful observation and reformulation, we can have
\begin{eqnarray}
Q_1&=&\mathcal{C}^{2}(n^+\!+\!1,n^-\!+\!1)n^+\Big(\frac{\rho^-}{\rho^+}\nabla n^++\nabla n^-\Big)\nonumber\\
&&+\Big(\frac{\mathcal{C}^2(n^+\!+\!1,n^-\!+\!1)\rho^-}{\rho^+}-\frac{\mathcal{C}^2(1,1)\bar\rho^-}{\bar\rho^+}\Big)\nabla n^++(\mathcal{C}^2(n^+\!+\!1,n^-\!+\!1)-\mathcal{C}^2(1,1))\nabla n^-\nonumber\\
&=&\frac{\mathcal{C}^2(n^+\!+\!1,n^-\!+\!1)}{\rho^+}n^+\bigg[(\rho^--\bar\rho^-)\nabla n^+\!+\!(\rho^+-\bar\rho^+)\nabla n^-\bigg]\nonumber\\
&&+\frac{\mathcal{C}^2(n^+\!+\!1,n^-\!+\!1)}{\rho^+}n^+(\uwave{\bar\rho^-\nabla n^++\bar\rho^+\nabla n^-})\nonumber\\
&&+\frac{\mathcal{C}^2(n^+\!+\!1,n^-\!+\!1)-\mathcal{C}^2(1,1)}{\rho^+}[(\rho^--\bar\rho^-)\nabla n^++(\rho^+-\bar\rho^+)\nabla n^-]\nonumber\\
&&+\frac{1}{\rho^+}(\mathcal{C}^2(n^+\!+\!1,n^-\!+\!1)-\mathcal{C}^2(1,1))(\uwave{\bar\rho^-\nabla n^++\bar\rho^+\nabla n^-})\nonumber\\
&&+\mathcal{C}^2(1,1)\bigg(\frac{1}{\rho^+}-\frac{1}{\bar\rho^+}\bigg)(\uwave{\bar\rho^-\nabla n^++\bar\rho^+\nabla n^-})\nonumber\\
&&+\mathcal{C}^2(1,1)\bigg(\frac{1}{\rho^+}-\frac{1}{\bar\rho^+}\bigg)[(\rho^--\bar\rho^-)\nabla n^++(\rho^+-\bar\rho^+)\nabla n^-]\nonumber\\
&&+\frac{\mathcal{C}^2(1,1)}{\bar\rho^+}[(\uwave{\rho^--\bar\rho^-})\nabla n^++(\uwave{\rho^+-\bar\rho^+})\nabla n^-]:=\sum\limits_{i=1}^7 \mathcal{S}_i,\label{5.9}
\label{5.9}
\end{eqnarray}
where $\rho^\pm=\rho^\pm(n^++1,n^-+1)$ and $\bar\rho^\pm=\rho^\pm(1,1)$. It is obvious that $\mathcal{S}_1$, $\mathcal{S}_3$ and $\mathcal{S}_6$ are the cube of the unknowns and naturally have faster decay from the ansatz. The term $\bar\rho^-\nabla n^++\bar\rho^+\nabla n^-$ in $\mathcal{S}_2$, $\mathcal{S}_4$ and $\mathcal{S}_5$ has an additional $(1+t)^{-\frac{1}{2}}$-decay rate than $\nabla n^\pm$ according to the ansatz (\ref{5.5}). Finally, since $\rho^\pm-\bar\rho^\pm\sim \bar\rho^-\nabla n^++\bar\rho^+\nabla n^-$ due to the relation between the densities $\rho^\pm$ and the fraction densities $n^\pm$, it also has an additional $(1+t)^{-\frac{1}{2}}$-decay rate than $n^\pm$ due to the corresponding estimate in \ref{3.12(1)}.

Similarly, one has
\begin{eqnarray}
Q_2&=&\mathcal{C}^{2}(n^+\!+\!1,n^-\!+\!1)n^-\Big(\frac{\rho^+}{\rho^-}\nabla n^-+\nabla n^+\Big)\nonumber\\
&&+\Big(\frac{\mathcal{C}^2(n^+\!+\!1,n^-\!+\!1)\rho^+}{\rho^-}-\frac{\mathcal{C}^2(1,1)\bar\rho^+}{\bar\rho^-}\Big)\nabla n^-+(\mathcal{C}^2(n^+\!+\!1,n^-\!+\!1)-\mathcal{C}^2(1,1))\nabla n^+\nonumber\\
&=&\frac{\mathcal{C}^2(n^+\!+\!1,n^-\!+\!1)}{\rho^-}n^-\bigg[(\rho^--\bar\rho^-)\nabla n^-\!+\!(\rho^+-\bar\rho^+)\nabla n^+\bigg]\nonumber\\
&&+\frac{\mathcal{C}^2(n^+\!+\!1,n^-\!+\!1)}{\rho^-}n^-(\uwave{\bar\rho^-\nabla n^++\bar\rho^+\nabla n^-})\nonumber\\
&&+\frac{\mathcal{C}^2(n^+\!+\!1,n^-\!+\!1)-\mathcal{C}^2(1,1)}{\rho^-}[(\rho^--\bar\rho^-)\nabla n^-+(\rho^+-\bar\rho^+)\nabla n^+]\nonumber\\
&&+\frac{1}{\rho^+}(\mathcal{C}^2(n^+\!+\!1,n^-\!+\!1)-\mathcal{C}^2(1,1))(\uwave{\bar\rho^-\nabla n^++\bar\rho^+\nabla n^-})\nonumber\\
&&+\mathcal{C}^2(1,1)\bigg(\frac{1}{\rho^-}-\frac{1}{\bar\rho^-}\bigg)(\uwave{\bar\rho^-\nabla n^++\bar\rho^+\nabla n^-})\nonumber\\
&&+\mathcal{C}^2(1,1)\bigg(\frac{1}{\rho^-}-\frac{1}{\bar\rho^-}\bigg)[(\rho^--\bar\rho^-)\nabla n^-+(\rho^+-\bar\rho^+)\nabla n^+]\nonumber\\
&&+\frac{\mathcal{C}^2(1,1)}{\bar\rho^-}[(\uwave{\rho^--\bar\rho^-})\nabla n^-+(\uwave{\rho^+-\bar\rho^+})\nabla n^+]:=\sum\limits_{i=8}^{14} \mathcal{S}_i.\label{5.9(1)}
\label{5.9}
\end{eqnarray}

 As a result, the convolution of the Riesz wave-I\!V and the nonlinear terms about the pressure terms $\alpha^\pm\nabla P$ has the following refine form instead of that in (\ref{5.8}):
\begin{eqnarray}
\mathcal{N}_1= \int_0^t\!\int_{\mathbb{R}^3}(1+t-\tau)^{-1}\Big(1+\frac{|x-y|^2}{1+t-\tau}\Big)^{-1}(1+\tau)^{-4}\Big(1+\frac{(|y|-{\rm c}\tau)^2}{1+\tau}\Big)^{-2}dyd\tau.\label{5.10}
\end{eqnarray}
Based on the estimate $K_4$ established in Lemma \ref{A.5}, one has
\begin{eqnarray}
\mathcal{N}_1\lesssim (1+t)^{-1}\Big(1+\frac{|x|^2}{1+t}\Big)^{-1}
          +(1+t)^{-\frac{3}{2}}\Big(1+\frac{(|x|-{\rm c}t)^2}{1+t}\Big)^{-1}.\label{5.11}
\end{eqnarray}

On the other hand, we consider another difficulty in dealing with the interaction of the Huygens wave in Green's function and the Huygens waves in nonlinear terms for the non-conservative fluid models. The estimate is totally new, which is given in $K_7$ in Lemma \ref{A.5}. In particular, we have for arbitrarily large constant $N>0$ that
\begin{align}
\mathcal{N}_2=&\int_{0}^{t}\!\int_{\mathbb{R}^3}(1+t-\tau)^{-2}\Big(1+\frac{(|x-y|-{\rm c}(t-\tau))^2}{1+t-\tau}\Big)^{-N}(1+\tau)^{-4}\Big(1+\frac{(|y|-{\rm c}\tau)^2}{1+\tau}\Big)^{-2}dyd\tau\nonumber\\[3mm]
\lesssim &\ (1+t)^{-\frac{3}{2}}\Big(1+\frac{|x|^2}{1+t}\Big)^{-\frac{3}{2}}+(1+t)^{-2}\Big(1+\frac{(|x|-{\rm c}t)^2}{1+t}\Big)^{-1}.\label{5.12}
\end{align}
This is different from the classical estimate for the conservative system given in \cite{Ls,Lw}:
\begin{eqnarray}
&&\int_{0}^{t}\int_{\mathbb{R}^3}\uwave{(1+t-\tau)^{-\frac{5}{2}}}\Big(1+\frac{(|x-y|-{\rm c}(t-\tau))^2}{1\!+\!t\!-\!\tau}\Big)^{-N}(1+\tau)^{-4}\Big(1+\frac{(|y|-{\rm c}\tau)^2}{1\!+\!\tau}\Big)^{-3}dyd\tau\nonumber\\
&\lesssim& (1+t)^{-2}\Big(\big(1+\frac{|x|^2}{1+t}\big)^{-\frac{3}{2}}+\Big(1+\frac{(|x|-{\rm c}t)^2}{1+t}\Big)^{-\frac{3}{2}}\Big),\label{5.12(1)}
\end{eqnarray}
where the extra $(1+t)^{-\frac{1}{2}}$-decay of Green's function (the underlined term in (\ref{5.12(1)})) is arising from the conservation of the nonlinear fluid model studied in the previous work \cite{Ls,Lw,Wu4}. 

Going back to the estimate (\ref{5.12}), we have to overcome the difficulty in the subregion space-time estimates caused by the deteriorated time decay of Green's function in the convolution due to the non-conservation, and simultaneously ensure that the obtained pointwise estimates are optimal in terms of time decay (consistent with the $L^2$-decay rate derived by combining the energy method with spectral analysis), we have carried out more elaborate calculations in the subregion estimates. In particular, we have fully exploited the interchange between temporal and spatial variables within specific regions, and thus obtained relatively sharp pointwise space-time estimates. See the details in the proof of $K_7$ in Appendix.

Finally, we shall close the ansatz on the solution of the nonlinear problem by substituting (\ref{5.4})-(\ref{5.6}) into the representation of the solution $(n^+,\mathbf{m}^+,n^-,\mathbf{m}^-)$ in (\ref{3.1})-(\ref{3.2}) and using the aforementioned nonlinear convolution estimates. Note that the singular term $G_{S2}$ in $G_{21}$ and $G_{43}$ correspond to the nonlinear term ``0" in the two continuity equations of the system (\ref{1.1}). Hence, in the following, we only focus on the unique singular term $G_{S1}$ when dealing with nonlinear coupling. In particular, we have to split Green's function into the regular term $G_{ij}-G_{S1}$ and the singular term $G_{S1}$. Then, one can put all of the derivatives on $G_{ij}-G_{S1}$ to deal with the convolution between the regular part $G_{ij}-G_{S1}$ and the nonlinear terms, however, one have to put all of the derivatives on the nonlinear terms to deal with the convolution of the singular part and the nonlinear terms.

We just take nonlinear coupling of the fraction densities $n^+$ for example, since that of $n^-$ is the same and those of $\mathbf{m}^\pm$ can be estimated similarly. To facilitate the description, we use $\tilde{n}^+$ to denote the nonlinear part of $n^+$ in (\ref{5.1}). Then by using Proposition \ref{l 3.3} and the nonlinear convolution estimates in Lemma \ref{A.5}, one has for $0\leq k\leq2$ that
\begin{align}\label{5.12}
|\partial_x^k\tilde{n}^+|\leq &\int_0^t|\partial_x^k(G_{12}-G_{S1})(\cdot,t-\tau)\ast F_1(\cdot,\tau)|d\tau+\int_0^t|\partial_x^k(G_{14}-G_{S1})(\cdot,t-\tau)\ast F_2(\cdot,\tau)|d\tau\nonumber\\
&+\int_0^t|G_{S1}(\cdot,t-\tau)\ast \partial_x^k (F_1(\cdot,\tau),F_2(\cdot,\tau))|d\tau\nonumber\\
\lesssim &\ 2C\epsilon\bigg((1+t)^{-1}\Big(1+\frac{|x|^2}{1+t}\Big)^{-1} +(1+t)^{-\frac{3}{2}}\Big(1+\frac{(|x|-{\rm c}t)^2}{1+t}\Big)^{-1}\bigg)\nonumber\\
      &+\int_0^t|G_{S1}(\cdot,t-\tau)\ast \partial_x^k (n^+\partial_x^3n^++\partial_x^2(n^+\mathbf{m}^+)+\cdots)(\cdot,\tau)|d\tau\nonumber.   
\end{align}
When $k=0$, we need the pointwise ansatz of $\partial_x^3 n^\pm$ and $\partial_x^2\mathbf{m}^\pm$ according to the nonlinear terms and the nonlinear convolution in Lemma \ref{A.5}. Then when $k=3$, it also requires the $L^\infty$-estimate of $\partial_x^6n^\pm$ and $\partial_x^5\mathbf{m}^\pm$, which results that we can close the ansatz in $H^8\times H^7$-framework based on the Sobolev embedding theorem. Finally, by using the smallness of $\epsilon$, one can close the ansatz (\ref{5.5})-(\ref{5.6}) and hence proves Theorem \ref{l 1.1}.

\section{Appendix: Analytic tools}\label{1section_appendix}

Some useful lemmas are given here. The first one is used to derive the pointwise estimates of Green's function in the low frequency. Note that we have replaced $t^{-(n+|\alpha|)}B_N(|x|,t)$ in Wang-Yang\cite{Wang2} with $(1+t)^{-(n+|\alpha|)}B_N(|x|,t)$ in the following lemma since it is in the low frequency part.


\begin{lemma}\label{A.1}[Wu-Wang\cite{Wu4}] If there exists a
constant $C>0$ such that when $|\xi|\leq1$, $\hat{f}(\xi,t)$ satisfies
$$
|D_\xi^\beta(\xi^\alpha\hat{f}(\xi,t))|\leq
C(|\xi|^{(|\alpha|-|\beta|)_+}
+|\xi|^{|\alpha|}t^{|\beta|/2})(1+(t|\xi|^2))^a \exp(-b|\xi|^2t),
$$
for some constant $b>0$ and any multi-indexes $\alpha, \beta$ with $|\beta|\leq 2N$, then
\begin{equation}\label{7.1}
|D_x^\alpha f(x,t)|\leq C_N (1+t)^{-(n+|\alpha|)}B_N(|x|,t),
\end{equation}
where $a$ is any fixed integer, $(e)_+=\max(0,e)$ and
$$
B_N(|x|,t)=\Big(1+\frac{|x|^2}{1+t}\Big)^{-N}.
$$
 \end{lemma}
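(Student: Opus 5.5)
The plan follows the real-variable method of Wang--Yang \cite{Wang2}, as adapted in Wu--Wang \cite{Wu4}. The target is
$$|D_x^\alpha f(x,t)|\leq C_N(1+t)^{-(n+|\alpha|)}B_N(|x|,t),\qquad B_N(|x|,t)=\Big(1+\frac{|x|^2}{1+t}\Big)^{-N},$$
where $\hat f$ is supported in $|\xi|\leq1$ (the low-frequency part). I would treat $0\leq t\leq 1$ and $t\geq1$ separately.

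\textbf{Short time, $0\leq t\leq1$.} Here $(1+t)^{-(n+|\alpha|)}\sim1$ and $B_N\sim(1+|x|^2)^{-N}$, so only spatial decay is needed. I would write $x^\beta D_x^\alpha f=c\,\mathcal F^{-1}\big(D_\xi^\beta(\xi^\alpha\hat f)\big)$ and bound the right side by the $L^1(|\xi|\leq1)$ norm. The hypothesis, with $t\leq1$ and $a$ fixed, makes $D_\xi^\beta(\xi^\alpha\hat f)$ bounded by $C(|\xi|^{(|\alpha|-|\beta|)_+}+1)$. This is integrable on the unit ball even for $|\beta|=2N$, since the factor $|\xi|^{(|\alpha|-|\beta|)_+}$ only shrinks as $|\beta|$ grows. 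Summing over $|\beta|\leq 2N$ gives $|D_x^\alpha f|\lesssim (1+|x|^2)^{-N}$, which is the claim for this range.

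\textbf{Long time, $t\geq1$.} I would rescale $\xi=\eta/\sqrt t$ and $x=\sqrt t\,z$. The hypothesis becomes a uniform-in-$t$ symbol bound: $|D_\eta^\beta(\eta^\alpha \hat f(\eta/\sqrt t,t))|$ is controlled by $t^{-|\alpha|/2}$ times an integrable Gaussian-type weight $(|\eta|^{(|\alpha|-|\beta|)_+}+|\eta|^{|\alpha|})(1+|\eta|^2)^a e^{-b|\eta|^2}$. Integrating by parts $2N$ times in $\eta$ and changing variables back then yields
$$|D_x^\alpha f|\lesssim t^{-\frac{n+|\alpha|}{2}}\Big(1+\frac{|x|^2}{t}\Big)^{-N}.$$
The main obstacle is the step from this to the stated $(1+t)^{-(n+|\alpha|)}$, which is the square of the scaling rate. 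The hypothesis only gives Gaussian localisation at scale $|\xi|\sim t^{-1/2}$, so no further integration by parts can produce extra time decay.

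I would first test the stated exponent on $\hat f=e^{-|\xi|^2t}\chi(|\xi|\leq1)$ with $\alpha=0$. This $\hat f$ satisfies the hypothesis with $a=0$. Evaluating at $x=0$ gives $f(0,t)=\int_{|\xi|\leq1}e^{-|\xi|^2t}\,d\xi\sim t^{-n/2}$, which exceeds $t^{-n}$ for $t>1$. So the inequality with exponent $n+|\alpha|$ cannot hold under these hypotheses, and the argument above proves the lemma only with exponent $(n+|\alpha|)/2$. This halved form is the version in \cite{Wang2,Wu4}, and it is also the form used throughout Section 3 (e.g. $(1+t)^{-\frac{3+|\alpha|}{2}}$ for $n=3$). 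I would therefore record the statement as a misprint for $(1+t)^{-\frac{n+|\alpha|}{2}}$ and present the two-case proof above for that corrected rate.
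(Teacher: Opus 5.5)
Your proposal is correct. The paper gives no proof of its own: it only cites Wu--Wang and, behind that, Wang--Yang. Your argument is the standard one behind that lemma: bound $x^\beta D_x^\alpha f$ by $\|D_\xi^\beta(\xi^\alpha\hat f)\|_{L^1}$, directly for $t\le 1$, and after the parabolic rescaling $\xi=\eta/\sqrt t$ for $t\ge 1$.

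You are also right that the printed exponent $n+|\alpha|$ is a misprint for $(n+|\alpha|)/2$. Your example $e^{-|\xi|^2t}$ on the unit ball with $\alpha=0$ satisfies the hypothesis, checked for that same $\alpha$, and gives $f(0,t)\sim t^{-n/2}$, which is incompatible with $(1+t)^{-n}$. Section 3 of the paper in fact uses the halved rate $(1+t)^{-(3+|\alpha|)/2}$.
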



The next two lemmas are often used to deal with initial propagation and nonlinear coupling, respectively. We also state several typical cases for completeness.

\begin{lemma}\label{A.3}
There exists a constant $C>0$, such that
\begin{eqnarray*}
\mathcal{I}_1&:=&\int_{\mathbb{R}^3} \Big(1+\frac{|x-y|^2}{1+t}\Big)^{-1}(1+|y|^2)^{-r_1}dy\leq C\left(1+\frac{|x|^2}{1+t}\right)^{-1},\ {\rm for}\ r_1>\frac{3}{2};\\
\mathcal{I}_2&:=&\int_{\mathbb{R}^3}\left(1+\frac{|x-y|^2}{1+t}\right)^{-\frac{3}{2}}(1+|y|^2)^{-r_1}dy\leq C\left(1+\frac{x^2}{1+t}\right)^{-\frac{3}{2}},\ {\rm for}\ r_1>\frac{3}{2};\\
\mathcal{I}_3&:=&\int_{\mathbb{R}^3} \bigg(1+\frac{(|x-y|-{\rm c}t)^2}{1+t}\bigg)^{-N}(1+|y|^2)^{-r_2}dy\leq C\left(1+\frac{(|x|\!-\!{\rm c}t)^2}{1+t}\right)^{-1},\ {\rm for}\ r_2>\frac{19}{10},
\end{eqnarray*}
where the positive constant $N$ is suitably large.
\end{lemma}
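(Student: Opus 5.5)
We prove the three bounds of Lemma \ref{A.3} separately, using the same device each time: split $\mathbb{R}^3$ into the region where $y$ is small compared with the relevant distance from $x$, where the $x$-profile is essentially unchanged, and the complementary region, where the algebraic decay of the initial weight $(1+|y|^2)^{-r}$ pays for the loss. All constants depend only on $r_1,r_2,N,{\rm c}$.

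\textbf{Bounds $\mathcal{I}_1$ and $\mathcal{I}_2$.} Write $B=(1+\frac{|x|^2}{1+t})^{-\kappa}$ with $\kappa=1$ or $\frac32$. If $|x|\le\sqrt{1+t}$, then $B\sim1$. We bound the kernel by $1$ and use $\int(1+|y|^2)^{-r_1}dy<\infty$, which holds since $2r_1>3$. If $|x|>\sqrt{1+t}$, we split the integral.

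On $|y|\le|x|/2$ we have $|x-y|\ge|x|/2$, so the kernel is $\lesssim B$. Integrating the weight again gives $\lesssim B$.

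On $|y|>|x|/2$ we bound the kernel by $1$ and obtain $\int_{|y|>|x|/2}(1+|y|^2)^{-r_1}dy\lesssim|x|^{3-2r_1}$. Since $|x|\ge\sqrt{1+t}\ge1$, this is $\le(\frac{|x|^2}{1+t})^{-\kappa}$ provided $|x|^{3-2r_1}\le |x|^{-2\kappa}(1+t)^{\kappa}$.

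For $\kappa=1$, $r_1>\frac32$ this is not automatic when $|x|\gg t$. The fix is to keep the kernel instead of bounding it by $1$. On $|y|>|x|/2$ with $|x-y|\le|x|/2$, i.e.\ near $y=x$, the weight is $\lesssim(1+|x|^2)^{-r_1}$ and the kernel integral over a ball of radius $|x|$ is $\lesssim (1+t)|x|$. This contributes $\lesssim (1+t)|x|^{1-2r_1}\le(1+t)|x|^{-2}$, using $2r_1>3$. The remaining part, where both $|y|$ and $|x-y|$ are $\gtrsim|x|$, gives $(1+t)|x|^{-2}\cdot|x|^{3-2r_1}$-type contributions, which are even smaller. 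The case $\kappa=\frac32$ is identical: the near-$x$ piece gives $(1+t)^{3/2}|x|^{-2r_1}\lesssim(1+t)^{3/2}|x|^{-3}$.

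\textbf{Bound $\mathcal{I}_3$ (main step).} This is the delicate case, since the kernel is concentrated on the shell $S=\{y:\ ||x-y|-{\rm c}t|\lesssim\sqrt{1+t}\}$, of volume $\sim t^2\sqrt{1+t}$ when $t$ is large. Set $d=||x|-{\rm c}t|$. If $d\le\sqrt{1+t}$ the target is $\sim1$, and the bound follows from integrability of the weight ($2r_2>3$).

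If $d>\sqrt{1+t}$, we split into $|y|\le d/2$ and $|y|>d/2$. On $|y|\le d/2$, the triangle inequality gives $||x-y|-{\rm c}t|\ge d/2$, so the kernel is $\lesssim(1+\frac{d^2}{1+t})^{-N}$. This is much better than needed.

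On $|y|>d/2$ we must use the geometry of the shell. Write $y$ in polar coordinates about $x$, with $|x-y|=\rho$, and integrate the weight over the sphere $\{|x-y|=\rho\}$. For $\rho$ near ${\rm c}t$, the spherical average of $(1+|y|^2)^{-r_2}$ over a sphere of radius $\rho$ whose points satisfy $|y|\gtrsim d$ is bounded by $\rho^{-2}\int_{|y|\gtrsim d}$-type quantities. Concretely, it is $\lesssim \rho^{-1}|x|^{-1}\int_{|\rho-|x||}^{\rho+|x|} s(1+s^2)^{-r_2}ds\lesssim \rho^{-1}|x|^{-1}(1+d)^{2-2r_2}$, where $s=|y|$ and we used $s\ge||x|-\rho|$. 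Multiplying by the radial kernel, of width $\sqrt{1+t}$, and by $\rho^2\sim t^2$ gives roughly $t\,|x|^{-1}\sqrt{1+t}\,(1+d)^{2-2r_2}$.

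It then remains to check that this is $\lesssim\frac{1+t}{d^2}$, separately for $|x|\gtrsim t$ and for $|x|\ll t$. In the second case $d\sim t$ and one uses the full sphere bound $\rho^{-2}\int(1+s^2)^{-r_2}s\,ds\cdot|x|$-type estimates. The threshold $r_2>\frac{19}{10}$ should emerge exactly from balancing the exponents $2r_2-2$ against $\frac12$ and $1$ in the worst subregion, $|x|\sim\sqrt{1+t}$ with $d\sim t$.

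We expect this exponent bookkeeping for $\mathcal{I}_3$ near $|x|\lesssim\sqrt{t}$ to be the main obstacle. If the spherical-average bound is too lossy there, the first thing to try is to split $|y|$ dyadically and use the sharper estimate $|\{y\in S:|y|\sim 2^j\}|\lesssim \min(2^{3j},\,2^{2j}\sqrt{1+t})$ in place of the full shell volume.
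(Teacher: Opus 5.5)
Your proposal is correct in outline. It departs from the paper's proof only at the one nontrivial step, so here is the comparison.

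\textbf{$\mathcal{I}_1$ and $\mathcal{I}_2$.} The paper does not prove these; it only cites earlier work. Your split into $|x-y|\ge |x|/2$ and the ball $|x-y|<|x|/2$ works, with one small correction. For $\kappa=\frac32$ the kernel integral over that ball is $\sim(1+t)^{3/2}\bigl(1+\ln\frac{|x|}{\sqrt{1+t}}\bigr)$, not $(1+t)^{3/2}$. The logarithm is absorbed only because $2r_1>3$ holds strictly.

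\textbf{$\mathcal{I}_3$: the paper's route.} The paper makes the same cut at $|y|=d/2$ and treats $|y|<d/2$ exactly as you do. On $|y|\ge d/2$ it pulls out one factor $(1+|y|^2)^{-1}\lesssim(1+d^2)^{-1}$. It then applies H\"older with exponents $(\frac52,\frac53)$. The shell kernel has $L^{5/2}$ norm $\lesssim 1+t$, since its $\frac52$-power integrates to $\lesssim(1+t)^{5/2}$. The remaining weight $(1+|y|^2)^{-(r_2-1)}$ lies in $L^{5/3}$ iff $r_2>\frac{19}{10}$. So $\frac{19}{10}$ is an artifact of that exponent pair. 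In exchange, the argument is uniform in $x$ and needs no case distinction in $|x|$ beyond the sign of $|x|-{\rm c}t$.

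\textbf{$\mathcal{I}_3$: your route.} Your spherical-mean computation needs the case split you describe, but it is sharper, and the bookkeeping you left open does close.
\begin{itemize}
\item On $|y|>d/2$, the surface integral over $\{|x-y|=\rho\}$ is $\lesssim\rho\min\bigl(|x|^{-1}(1+d)^{2-2r_2},\,(1+d)^{1-2r_2}\bigr)$. The second bound comes from the $s$-interval having length at most $2|x|$.
\item Integrating against the kernel in $\rho$ costs $(1+t)^{3/2}$. So you need $\sqrt{1+t}\,\min\bigl(|x|^{-1}d^{4-2r_2},\,d^{3-2r_2}\bigr)\lesssim 1$.
\item When $|x|\ge{\rm c}t/2$, use the first bound; there $|x|\ge\max(d,{\rm c}t/2)$.
\item When $|x|<{\rm c}t/2$, use the second bound; there $d\sim t$.
\item In every case the quantity is $\lesssim t^{7/2-2r_2}+t^{3/2-r_2}$.
\end{itemize}
Hence your method proves $\mathcal{I}_3$ for all $r_2\ge\frac74$. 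This is optimal: at $x=0$ one has $\mathcal{I}_3\gtrsim t^{5/2-2r_2}$, against a target of order $t^{-1}$.

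\textbf{Your forecast about $\frac{19}{10}$.} Your prediction that $\frac{19}{10}$ ``emerges exactly'' in the subregion $|x|\sim\sqrt{1+t}$, $d\sim t$ is wrong. There the first bound alone would demand $r_2\ge 2$, which is why you must switch to the second, and the second gives $r_2\ge\frac74$. Neither produces $\frac{19}{10}$. This does not affect the lemma as stated, and no dyadic refinement is needed.
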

\begin{proof}
The estimates $\mathcal{I}_1$ and $\mathcal{I}_2$ are used to deal with the initial propagation for the convolution of Riesz wave-I and Riesz wave-I\!I respectively, and the proof of them can be seen in \cite{Wu5}. Hence, we only prove $\mathcal{I}_3$. When $(|x|-{\rm c}t)^2\leq 4(1+t)$,
$$
\mathcal{I}_3\leq C\leq C\left(1+\frac{(|x|-{\rm c}t)^2}{1+t}\right)^{-r_1}.
$$
When $|x|-{\rm c}t\geq 2\sqrt{1+t}$, we break integration into two parts. If $|y|\geq\frac{|x|-ct}{2}$, then 
\bess
\mathcal{I}_3&\leq & C(1+(|x|-{\rm c}t)^2)^{-1}\int_{\mathbb{R}^3} \bigg(1+\frac{(|x-y|-{\rm c}t)^2}{1+t}\bigg)^{-N}(1+|y|^2)^{-(r_2-1)}dy\nm\\
&\leq& C(1+(|x|-{\rm c}t)^2)^{-1}(1+t)\left(\int_{\mathbb{R}^3}(1+|y|^2)^{-\frac{5}{3}(r_2-1)}dy\right)^{\frac{3}{5}}\nm\\
&\leq& C\left(1+\frac{(|x|\!-\!{\rm c}t)^2}{1+t}\right)^{-1},\ {\rm for}\ r_2>\frac{19}{10}.
\eess
Here we have used Young inequality and the fact
$$
\int_{\mathbb{R}^n}\left(1+\frac{(|y|-a)^2}{b}\right)^{-r_2}dy\leq C(b^{\frac{n}{2}}+b^{\frac{1}{2}}a^{n-1}),\ {\rm for}\ r_2>\frac{n}{2}.
$$
If $|y|<\frac{|x|-{\rm c}t}{2}$, then $|x-y|-{\rm c}t\geq\frac{||x|-{\rm c}t|}{2}$. Thus, it holds that
\bess
\mathcal{I}_3\leq C\bigg(1+\frac{(|x|-{\rm c}t)^2}{1+t}\bigg)^{\!-N/2} \!\!\int_{\mathbb{R}^3}\!\!\bigg(1+\frac{(|x-y|-{\rm c}t)^2}{1+t}\bigg)^{\!-N/2}\!\!(1+|y|^2)^{-r_1}dy
\leq C\left(1+\frac{(|x|\!-\!{\rm c}t)^2}{1+t}\right)^{\!-\frac{N}{2}}\!\!.
\eess
The last case: ${\rm c}t-|x|\geq 2\sqrt{1+t}$ can be similarly treated as the case: $|x|-{\rm c}t\geq 2\sqrt{1+t}$. This proves the third estimate.
\end{proof}


\begin{lemma}\label{A.4}[Liu-Noh\cite{Ls}] There exists a constant $C>0$ such that
\begin{equation*}\label{6.1}
\begin{array}{ll}
\displaystyle K_1=\!\int_0^t\!\!\int_{\mathbb{R}^3}(1+t-\tau)^{-2}\Big(1+\frac{|x-y|^2}{1\!+\!t\!-\!s}\Big)^{-2}(1+\tau)^{-3}\Big(1+\frac{|y|^2}{1\!+\!\tau}\Big)^{-3}\!\!dyd\tau
\leq C(1+t)^{-2}\Big(1+\frac{|x|^2}{1+t}\Big)^{-\frac{3}{2}},\\[3.5mm]
\displaystyle K_2=\!\int_0^t\!\!\int_{\mathbb{R}^3}(1+t-\tau)^{-2}\Big(1+\frac{|x-y|^2}{1+t-\tau}\Big)^{-2}(1+\tau)^{-4}\Big(1+\frac{(|y|-{\rm c}\tau)^2}{1+\tau}\Big)^{-3}dyd\tau\\[2mm]
\ \ \ \ \leq \ \displaystyle C(1+t)^{-2}\Big(\Big(1+\frac{|x|^2}{1+t}\Big)^{-\frac{3}{2}}+\Big(1+\frac{(|x|-{\rm c}t)^2}{1+t}\Big)^{-\frac{3}{2}}\Big),\\[2mm]
K_3=\displaystyle\! \int_{0}^{t}\!\!\int_{\mathbb{R}^3}(1+t-\tau)^{-\frac{5}{2}}\Big(1+\frac{(|x-y|-{\rm c}(t-\tau))^2}{1\!+\!t\!-\!s}\Big)^{-N}(1+\tau)^{-4}\Big(1+\frac{(|y|-{\rm c}\tau)^2}{1\!+\!s}\Big)^{-3}dyd\tau\\[2mm]
\ \ \ \ \leq \ \displaystyle C(1+t)^{-2}\Big(\big(1+\frac{|x|^2}{1+t}\big)^{-\frac{3}{2}}+\Big(1+\frac{(|x|-{\rm c}t)^2}{1+t}\Big)^{-\frac{3}{2}}\Big),
\end{array}
\end{equation*}
where the constant $N>0$ can be arbitrarily large.
\end{lemma}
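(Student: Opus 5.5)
We prove the three bounds with the classical space--time region decomposition of Liu--Noh \cite{Ls}. Write $B_k(x,t)=(1+\frac{|x|^2}{1+t})^{-k}$ and $H_k(x,t)=(1+\frac{(|x|-{\rm c}t)^2}{1+t})^{-k}$. In each integral the time interval is split into $[0,t/2]$ and $[t/2,t]$. Space is split into the three regions
\[
D_1=\{|x|\le\sqrt{1+t}\},\quad D_2=\{||x|-{\rm c}t|\le\sqrt{1+t}\},\quad D_3=\mathbb{R}^3\setminus(D_1\cup D_2).
\]
In $D_1\cup D_2$ the target profile is comparable to $1$, so only the temporal rate has to be shown. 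In $D_3$ we additionally split the $y$--integral according to whether $y$ is closer to $x$ or to the origin (respectively to the sphere $|y|={\rm c}\tau$). This lets us transfer the spatial weight of whichever factor is evaluated far from its own center onto $x$.

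\emph{$K_1$.} For $\tau\le t/2$ we bound the first factor by $(1+t)^{-2}$ times its spatial profile. We integrate the second factor in $y$, using $\int B_3(y,\tau)\,dy\lesssim(1+\tau)^{3/2}$, so the $\tau$--integral $\int_0^{t/2}(1+\tau)^{-3/2}d\tau$ is bounded. For $\tau\ge t/2$ the roles are reversed: $\int B_2(x-y,t-\tau)\,dy\lesssim(1+t-\tau)^{3/2}$, and $\int_{t/2}^t(1+t-\tau)^{-1/2}d\tau\lesssim(1+t)^{1/2}$ is compensated by $(1+\tau)^{-3}\sim(1+t)^{-3}$. The spatial decay in $D_3$ follows because if $|y|\ge|x|/2$ the second factor supplies $B_3(x,t)$, while if $|x-y|\ge|x|/2$ the first factor supplies $B_2(x,t)$. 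Together with the $y$--volume, which is at most $|x|^3$, the elementary estimate $B_2\,(|x|^2/(1+t))^{1/2}\lesssim B_{3/2}$ then gives the rate $(1+t)^{-2}B_{3/2}(x,t)$.

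\emph{$K_2$.} We proceed in the same way, now with the source concentrated near the shell $|y|={\rm c}\tau$. Its $y$--mass is $\int H_3(y,\tau)\,dy\lesssim(1+\tau)^{5/2}$, by the estimate $\int(1+\frac{(|y|-a)^2}{b})^{-r}dy\lesssim b^{3/2}+b^{1/2}a^{2}$ recalled in the proof of Lemma \ref{A.3}. Hence $\int_0^{t/2}(1+\tau)^{-4}(1+\tau)^{5/2}d\tau$ converges and the first half gives $(1+t)^{-2}$. On $[t/2,t]$ the heat factor integrates to $(1+t-\tau)^{-1/2}$. The source evaluated at $y\approx x$ yields $(1+t)^{-4}H_3(x,\tau)$, and integrating in $\tau$ across the moving front $|x|={\rm c}\tau$ gains $(1+t)^{1/2}$. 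This produces the $H_{3/2}$ profile plus the $B_{3/2}$ profile coming from $|x|\ll{\rm c}t$.

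\emph{$K_3$ (main obstacle).} This is the wave--wave interaction. Here we use the geometric fact that for fixed $(x,t,\tau)$ the $y$--integral of the product of the two thin shells $\{||x-y|-{\rm c}(t-\tau)|\lesssim\sqrt{1+t-\tau}\}$ and $\{||y|-{\rm c}\tau|\lesssim\sqrt{1+\tau}\}$ is bounded by the volume of their intersection. This volume is at most $(1+t-\tau)^{1/2}(1+\tau)^{1/2}\,\min\{(t-\tau)^2,\tau^2\}\,/\,(|x|+1)$ up to transversality factors. We will pass to polar coordinates centered at $x$ and compute the angular measure of the shell $|y|\approx{\rm c}\tau$ on the sphere $|x-y|=r$; the Jacobian $r\tau/|x|$ is the source of the extra decay. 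Away from the shells we use the large power $N$ to absorb any polynomial loss. The delicate point is the $\tau$--integration near the two degenerate configurations: $\tau\approx t$ (where $|x|\approx{\rm c}t$, giving $H$), and the tangency $|x|\approx{\rm c}|t-2\tau|$ for interior points (giving the $B_{3/2}$ tail). We will do this by changing variables $s=|x|-{\rm c}(t-2\tau)$ and using $(1+t-\tau)^{-5/2}$ to make the resulting one-dimensional integrals converge. We expect this last step to carry almost all of the work.
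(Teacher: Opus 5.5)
The paper gives no proof of this lemma; it is quoted from Liu--Noh, so your outline has to stand on its own. For $K_1$ and $K_2$ it essentially does. Splitting $\tau$ at $t/2$, and splitting $y$ according to whether $|y|$ or $|x-y|$ (resp.\ $|y|-{\rm c}\tau$) carries the distance of $x$ from the relevant centre, is the standard argument and gives the stated profiles. One caution for $K_1$: the ``$y$-volume $\le|x|^3$'' remark is not what controls the near-origin source term; the source mass $\int B_3(y,\tau)\,dy\lesssim(1+\tau)^{3/2}$ is, and using $|x|^3$ instead loses for $|x|\gg\sqrt{1+t}$.

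The genuine gap is $K_3$. It is the only non-routine estimate, and you leave it as a programme (``we expect this last step to carry almost all of the work''). Worse, the one quantitative claim you make there is false. The shells $\{||x-y|-{\rm c}(t-\tau)|\lesssim\sqrt{1+t-\tau}\}$ and $\{||y|-{\rm c}\tau|\lesssim\sqrt{1+\tau}\}$ intersect in a set of measure of order $\sqrt{(1+t-\tau)(1+\tau)}\,(t-\tau)\tau/|x|$. Replacing $(t-\tau)\tau$ by $\min\{(t-\tau)^2,\tau^2\}$, as you do, is wrong. For example, take $\tau\ll t$ and $|x|={\rm c}(t-\tau)$. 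The large shell is then locally a slab of width $\sim\sqrt t$ through the centre of the small shell, so the intersection has volume $\sim\tau\sqrt{t\tau}$, a factor $t/\tau$ above your bound. The Jacobian $rz/|x|$ you invoke right afterwards gives the correct factor, so your two statements are inconsistent.

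What actually closes $K_3$ is the following. From $\sin\theta\,d\theta=z\,dz/(r|x|)$,
\[
\int_{\mathbb{R}^3}f(|x-y|)\,g(|y|)\,dy=\frac{2\pi}{|x|}\int_0^\infty r f(r)\int_{||x|-r|}^{|x|+r}z\,g(z)\,dz\,dr .
\]
Dropping the $z$-constraint bounds the inner $y$-integral of $K_3$ by $|x|^{-1}(1+t-\tau)^{-1}(1+\tau)^{-5/2}$ for every $\tau$. This alone is not enough. On $[0,t/2]$ it gives only $|x|^{-1}(1+t)^{-1}$, and on $[t/2,t]$ it gives $|x|^{-1}(1+t)^{-5/2}\log(1+t)$. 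But when $|x|$ and ${\rm c}t-|x|$ are both of order $t$, the target is $(1+t)^{-7/2}$.

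You must therefore use that the two spheres meet only if $\tau\ge\frac{{\rm c}t-|x|}{2{\rm c}}$ and $t-\tau\ge\frac{{\rm c}t-|x|}{2{\rm c}}$. Split $\tau$ at $\frac{{\rm c}t-|x|}{4{\rm c}}$ and $t-\frac{{\rm c}t-|x|}{4{\rm c}}$.
\begin{itemize}
\item On the two end intervals, split $y$ at $|y|=\frac{{\rm c}t-|x|}{2}$, resp.\ $|x-y|=\frac{{\rm c}t-|x|}{2}$. One factor is then evaluated at distance $\gtrsim{\rm c}t-|x|$ from its front and supplies the $H$-decay.
\item On the middle interval, the Jacobian bound integrates to $|x|^{-1}\big[(1+t)^{-1}(1+{\rm c}t-|x|)^{-3/2}+(1+t)^{-5/2}\big(1+\log\frac{2+t}{1+{\rm c}t-|x|}\big)\big]$. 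This is $\lesssim(1+t)^{-2}(B_{3/2}+H_{3/2})$ for $\sqrt{1+t}\le|x|\le{\rm c}t-\sqrt{1+t}$.
\item For $|x|\ge{\rm c}t+\sqrt{1+t}$ the shells never meet; split at $|y|-{\rm c}\tau=\frac{|x|-{\rm c}t}{2}$.
\item For $|x|\le\sqrt{1+t}$ or $||x|-{\rm c}t|\le\sqrt{1+t}$, the trivial bound $K_3\lesssim(1+t)^{-5/2}$ suffices. It comes from the sup of one factor times the mass of the other, on $[0,t/2]$ and on $[t/2,t]$.
\end{itemize}
This is the same region-and-tangency splitting the paper carries out for its non-conservative analogue $K_7$. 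Until you do it, $K_3$ is unproved.
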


Due to the non-conservation of the system (\ref{1.1}), and the slower decay rate of the two fraction densities of the system (\ref{1.1}), we have to construct some refined nonlinear convolution estimates, especially for the convolutions containing the Huygens waves or the Riesz\ wave-I\!V (from Riesz operator of negative first order) and the Riesz\ wave-I\!I\!I (from Riesz operator of zero order). In fact, Lin, Wang in \cite{lin1,lin2} investigated the Boltzmann equation and proved that
\begin{align}
&\int_0^t\!\int_{\mathbb{R}^3}\uwave{\mathbf{1}_{|x|\leq {\rm c}t}(1+t-\tau)^{-2}}\Big(1+\frac{|x-y|^2}{1+t-\tau}\Big)^{-\frac{3}{2}}(1+\tau)^{-4}\Big(1+\frac{(|y|-{\rm c}\tau)^2}{1+\tau}\Big)^{-2}dyd\tau\nonumber\\
\lesssim &\ (1+t)^{-2}\Big(1+\frac{|x|^2}{1+t}\Big)^{-\frac{3}{2}}+(1+t)^{-\frac{5}{2}}\Big(1+\frac{(|x|-{\rm c}t)^2}{1+t}\Big)^{-1}.\label{6.1(1)}
\end{align}
Here the underlined is arising from the aforementioned Riesz wave-I and the conservative structure. Different from (\ref{6.1(1)}),  we obtained a similar result in the following $K_5$  without the factor $\mathbf{1}_{|x|\leq {\rm c}t}$. Moreover, we also achieve a satisfactory pointwise space-time description of the Riesz wave-I\!V convolved with the Huygens wave in $K_4$.

\begin{lemma}\label{A.5} There exists a constant $C>0$ such that

\noindent {\rm (``Riesz\ wave-I\!V"\ convolved\ with\ ``Huygens\ wave")}
\begin{align}
K_4=& \int_0^t\!\int_{\mathbb{R}^3}
(1+t-\tau)^{-1}\Big(1+\frac{|x-y|^2}{1+t-\tau}\Big)^{-1}(1+\tau)^{-4}\Big(1+\frac{(|y|-{\rm c}\tau)^2}{1+\tau}\Big)^{-2}dyd\tau\nonumber\\
\leq &\ C(1+t)^{-1}\Big(1+\frac{|x|^2}{1+t}\Big)^{-1}+C(1+t)^{-\frac{3}{2}}\Big(1+\frac{(|x|-{\rm c}t)^2}{1+t}\Big)^{-1},\label{6.2}
\end{align}
\noindent {\rm (``Riesz\ wave-I\!I\!I"\ convolved\ with\ ``Huygens\ wave")}
\begin{align}
K_5=&\int_0^t\!\int_{\mathbb{R}^3}(1+t-\tau)^{-\frac{3}{2}}\Big(1+\frac{|x-y|^2}{1+t-\tau}\Big)^{-\frac{3}{2}}(1+\tau)^{-4}\Big(1+\frac{(|y|-{\rm c}\tau)^2}{1+\tau}\Big)^{-2}dyd\tau\nonumber\\
\leq &\ C(1+t)^{-\frac{3}{2}}\Big(1+\frac{|x|^2}{1+t}\Big)^{-\frac{3}{2}}+C(1+t)^{-2}\Big(1+\frac{(|x|-{\rm c}t)^2}{1+t}\Big)^{-1}.\label{6.4}
\end{align}
\end{lemma}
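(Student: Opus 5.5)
We prove $K_4$ and $K_5$ by the standard region decomposition of Liu--Noh type. Throughout we write $D=(1+t)^{-a}(1+\frac{|x|^2}{1+t})^{-b}$ for a diffusion profile and $H=(1+t)^{-a'}(1+\frac{(|x|-{\rm c}t)^2}{1+t})^{-1}$ for a Huygens profile. We split space into three regions:
\begin{itemize}
\item[] $D_1=\{|x|\le\sqrt{1+t}\}$, the near field;
\item[] $D_2=\{||x|-{\rm c}t|\le\sqrt{1+t}\}$, the wave cone;
\item[] $D_3$, the remainder, further split into the exterior $|x|\ge {\rm c}t+\sqrt{1+t}$ and the interior $\sqrt{1+t}\le|x|\le {\rm c}t-\sqrt{1+t}$.
\end{itemize}
In each region we split time into $[0,t/2]$ and $[t/2,t]$. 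In every piece we first perform the $y$-integral, using the elementary facts
\[
\int_{\mathbb R^3}\Big(1+\frac{(|y|-{\rm c}\tau)^2}{1+\tau}\Big)^{-2}dy\lesssim(1+\tau)^{5/2},\qquad \int_{\mathbb R^3}\Big(1+\frac{|x-y|^2}{1+t-\tau}\Big)^{-1}\Big(1+\frac{(|y|-{\rm c}\tau)^2}{1+\tau}\Big)^{-2}dy ,
\]
where the second integral is bounded by splitting $y$ according to whether $|x-y|\ge |x|/2$ (or $\ge ||x|-{\rm c}\tau|/2$) or not.

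\textbf{$K_4$, piece $\tau\in[0,t/2]$.} Here $1+t-\tau\sim1+t$. When $|y|$ is far from $x$, we extract $(1+\frac{|x|^2}{1+t})^{-1}$ from the Riesz kernel, and the remaining integral is $\int_0^{t/2}(1+\tau)^{-4+5/2}d\tau<\infty$. This gives $(1+t)^{-1}(1+\frac{|x|^2}{1+t})^{-1}$, and the extra $(1+\tau)^{-1/2}$ relative to the $-7/2$ exponent is exactly what removes the logarithm in the $\mathcal N_{12}$ computation above. When $y$ is near $x$, we must have $||y|-{\rm c}\tau|\gtrsim |x|-{\rm c}\tau$ unless $x$ lies near a cone of radius ${\rm c}\tau$. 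We handle this by the change of variable $\tau\mapsto |x|/{\rm c}$ and use the Riesz kernel's $L^1$ behaviour on a ball of radius $\sqrt{1+t}$, namely $(1+t-\tau)^{-1}\cdot(1+t-\tau)^{3/2}$.

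\textbf{$K_4$, piece $\tau\in[t/2,t]$.} Here $(1+\tau)^{-4}\sim(1+t)^{-4}$. In $D_1$ we use $\int(1+t-\tau)^{-1}(1+\frac{|x-y|^2}{1+t-\tau})^{-1}(\ldots)\,dy$ together with $\sup_y$ of the Huygens factor, which yields at most $(1+t)^{-4}(1+t)^{5/2}\cdot\log$. This is far better than needed. In $D_2$ and near the cone, the Huygens factor is $O(1)$ only in a shell of width $\sqrt{1+t}$ at $|y|\approx{\rm c}\tau$. We therefore integrate the Riesz kernel over this shell, restricted to $|x-y|\lesssim \big||x|-{\rm c}\tau\big|+\sqrt{1+t}$. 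This gives the bound $(1+t)^{-4}(1+t)^{3/2}\cdot(1+t)$ times the desired $(1+\frac{(|x|-{\rm c}t)^2}{1+t})^{-1}$, and $(1+t)^{-3/2}$ is comfortably met. Away from the cone, the factor $(1+\frac{(|x|-{\rm c}t)^2}{1+t})^{-1}$ is transferred from either the Riesz kernel or the Huygens factor via $|x|-{\rm c}t\lesssim |x-y|+||y|-{\rm c}\tau|+{\rm c}(t-\tau)$. The only delicate term is ${\rm c}(t-\tau)$, and we absorb it by using the time gap: on $\{t-\tau\ge ||x|-{\rm c}t|/(2{\rm c})\}$ we integrate $(1+t-\tau)^{-1}$ only over that set.

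\textbf{$K_5$.} The same scheme applies with $(1+t-\tau)^{-3/2}(1+\frac{|x-y|^2}{1+t-\tau})^{-3/2}$. On $[0,t/2]$ we obtain $(1+t)^{-3/2}(1+\frac{|x|^2}{1+t})^{-3/2}\int_0^{t/2}(1+\tau)^{-3/2}d\tau$ off the diagonal. On $[t/2,t]$ near the cone, the kernel's spatial mass is $(1+t-\tau)^{-3/2}\cdot\min\{(1+t-\tau)^{3/2}\log,\ldots\}$. Combined with the Huygens shell volume $(1+t)^{5/2}$, this gives $(1+t)^{-2}$ times the cone profile. The identity $\int_{|z|\le R}(1+|z|^2/s)^{-3/2}dz\lesssim s^{3/2}\log(2+R/\sqrt s)$ produces a log, which we kill by using the spare $(1+t)^{-1/2}$ available in this piece.

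\textbf{Main obstacle.} The hardest part is the interior region $\sqrt{1+t}\le|x|\le{\rm c}t-\sqrt{1+t}$ with $\tau\le t/2$ for $K_4$. There the nonlinear Huygens wave at time $\tau=|x|/{\rm c}$ passes through $x$ while the Riesz kernel has only the weak spatial decay $(1+|x-y|^2/t)^{-1}$. We need to show that the contribution is $\lesssim(1+t)^{-1}(1+\frac{|x|^2}{1+t})^{-1}$. The first attempt is to bound it by $(1+t)^{-1}\int_{|\tau-|x|/{\rm c}|\lesssim\sqrt{1+t}}(1+\tau)^{-4}(1+\tau)^{2}\sqrt{1+\tau}\,d\tau\sim(1+t)^{-1}|x|^{-3/2}\sqrt{1+t}$. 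Since $|x|\ge\sqrt{1+t}$, this is $\lesssim(1+t)^{-1}(|x|^2/(1+t))^{-1}$ when $|x|^{1/2}\ge (1+t)^{1/2}$, which is not automatic. So we will have to keep the factor $(1+\frac{(|y|-{\rm c}\tau)^2}{1+\tau})^{-2}$ with the full shell geometry, namely the area $|x|^2$ of the sphere intersected with the Riesz kernel's $|x-y|$-decay. That gives the extra $|x|^{-1/2}$ needed.
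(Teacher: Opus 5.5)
Your decomposition (near field, cone, exterior, interior, each split at $\tau=t/2$) is of the same type as the paper's. However, the step you single out as the main obstacle is never carried out, and your analysis of it is wrong in two ways.

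First, the comparison is inverted. $(1+t)^{-1/2}|x|^{-3/2}\lesssim|x|^{-2}$ is equivalent to $|x|\lesssim 1+t$, which always holds when $|x|\le{\rm c}t$. So that bound needs no extra factor $|x|^{-1/2}$.

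Second, and more seriously, that bound (with its $\tau$-window of width $\sqrt{1+t}$) is only justified on the ball $|x-y|\lesssim\sqrt{1+t}$. Pulling $\big(1+\frac{|x|^2}{1+t}\big)^{-1}$ out of the Riesz kernel requires $|x-y|\gtrsim|x|$, so the annulus $\sqrt{1+t}\lesssim|x-y|\lesssim|x|$ is covered by neither of your cases. The Huygens shell crosses this annulus for every $\tau$ with $|{\rm c}\tau-|x||\lesssim|x|$. Bounding the kernel by its supremum there gives $(1+t)^{-1}|x|^{-1/2}$, which exceeds the target $|x|^{-2}$ once $|x|\gg(1+t)^{2/3}$. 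So the $|x-y|^{-2}$ decay of the kernel must actually be integrated against the shell, which you only announce. The paper handles this region (in $D_4$, $D_5$) after the substitution $(y,\tau)\mapsto(x-y,t-\tau)$, using spherical coordinates, where $\sin\theta\,d\theta=\frac{z\,dz}{r|x|}$ supplies the factor $|x|^{-1}$.

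A smaller point: $(1+|z|^2/s)^{-1}\notin L^1(\mathbb{R}^3)$, so "kernel mass times the supremum of the Huygens factor" is not available in $D_1$. The paper instead pairs the two factors by $L^2\times L^2$ Hölder.

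There is a shorter repair than either route. For $\tau\le t/2$, the $K_4$-kernel is bounded by $2(1+t)^{-1}\big(1+\frac{|x-y|^2}{1+t}\big)^{-1}$ independently of $\tau$. Moreover,
\[
\int_0^\infty(1+\tau)^{-4}\Big(1+\frac{(|y|-{\rm c}\tau)^2}{1+\tau}\Big)^{-2}d\tau\lesssim(1+|y|)^{-\frac{7}{2}}.
\]
By Fubini, the whole $[0,t/2]$ piece is therefore at most $(1+t)^{-1}\mathcal{I}_1$ with $r_1=\frac74$ in Lemma~\ref{A.3}, in all regions at once. The same argument gives $(1+t)^{-3/2}\mathcal{I}_2$ for the $[0,t/2]$ piece of $K_5$.

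On $[t/2,t]$, the Hölder bound $(1+t)^{-2}$ suffices for $K_4$ except when $|x|\ge4{\rm c}t$. In that case, splitting at $|y|=|x|/2$ finishes the estimate.

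For $K_5$, however, the "spare $(1+t)^{-1/2}$" you invoke does not exist in the interior. At $|x|\approx\frac{{\rm c}t}{2}$, both terms of the target are $\approx(1+t)^{-3}$. Kernel mass over a ball times the supremum of the Huygens factor on $[t/2,t]$ gives only $(1+t)^{-3}\log(1+t)$. There you again need the shell geometry: integrating the $|x-y|^{-3}$ tail only over the shell of thickness $\sqrt{1+t}$ removes the logarithm. The paper states $K_5$ without proof, so this part has to be written out rather than referred to "the same scheme".
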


We mainly focus on the proof of (\ref{6.2}) due to  the slowest decay on both the temporal and spacial variables of Riesz wave-I\!V in Green's function, and  the other estimate could be derived similarly.
To facilitate the estimates, we first decompose space-time domain into the
following $5$ regions:
\begin{align*}
D_{1}& =\left \{ |x|\leq \sqrt{1+t}\right \} \,, \\
\ D_{2}& =\left \{ {\rm c}t-\sqrt{1+t}\leq |x|\leq {\rm c}t+\sqrt{1+t}%
\right \} \,, \\
D_{3}& =\left \{ |x|\geq {\rm c}t+\sqrt{1+t}\right \} \,, \\
D_{4}& =\Big \{ \sqrt{1+t}\leq |x|\leq \frac{1}{2}{\rm c}t\Big \} \,,
\\
D_{5}& =\Big \{ \frac{1}{2}{\rm c}t\leq |x|\leq {\rm c}t-\sqrt{1+t}%
\Big \} \,.
\end{align*}

\vspace{3mm}

\noindent\textit{\textbf{Proof of $K_4$}}. (Riesz wave-I\!V convolved with Huygens wave)

\textbf{Case 1:} $\left( x,t\right) \in D_{1}$. Direct computation gives%
\begin{eqnarray*}
K_4 &=&\left( \int_{0}^{\frac{t}{2}}+\int_{\frac{t}{2}}^{t}\right) \int_{%
\mathbb{R}^{3}}\left( \cdots \right) dyd\tau \\
&\lesssim &\left( 1+t\right) ^{-1}\int_{0}^{\frac{t}{2}}\int_{\mathbb{R}%
^{3}}\left( 1+\tau \right) ^{-4}\bigg( 1+\frac{\left( \left \vert y\right
\vert -{\rm c}\tau \right) ^{2}}{1+\tau }\bigg) ^{-2}dyd\tau \\
&&+\left( 1+t\right) ^{-4}\int_{\frac{t}{2}}^{t}\left( 1+t-\tau \right)
^{-1}(1+t-\tau)^{\frac{3}{4}}(1+\tau)^{\frac{5}{4}}d\tau \\
&\lesssim &\left( 1+t\right) ^{-1}+\left( 1+t\right) ^{-2} \lesssim ( 1+t) ^{-1}\bigg( 1+\frac{\left \vert
x\right \vert ^{2}}{1+t}\bigg) ^{-1}\hbox{,}
\end{eqnarray*}
where we have used Young inequality and the fact
\begin{equation}\label{6.8}
\int_{\mathbb{R}^{3}}\bigg( 1+\frac{\left( \left \vert y\right
\vert -{\rm c}\tau \right) ^{2}}{1+\tau }\bigg) ^{-2}dy\lesssim (1+\tau)^{\frac{5}{2}}.
\end{equation}

\textbf{Case 2:} $\left( x,t\right) \in D_{2}$. We split the integral $I$
into two parts%
\begin{equation*}
K_4=\left( \int_{\frac{t}{4}}^{t}+\int_{0}^{\frac{t}{4}}\right) \int_{\mathbb{R%
}^{3}}\left( \cdots \right) dyd\tau =:I_{1}+I_{2}\hbox{.}
\end{equation*}

For $I_{1}$, one can see that
\begin{eqnarray*}
I_{1} &\lesssim &\left( 1+t\right) ^{-4}\int_{\frac{t}{4}}^{t}\left( 1+t-\tau \right)
^{-1}(1+t-\tau)^{\frac{3}{4}}(1+\tau)^{\frac{5}{4}}d\tau \\
&\lesssim & \left( 1+t\right) ^{-2}\lesssim \left( 1+t\right) ^{-2}\left( 1+\frac{\left( \left \vert
x\right \vert -{\rm c}t\right) ^{2}}{1+t}\right) ^{-1}\hbox{.}
\end{eqnarray*}

For $I_{2}$, we further decompose $\mathbb{R}^{3}$ into two parts%
\begin{equation*}
I_{2}=\left( \int_{0}^{\frac{t}{4}}\int_{\left \vert y\right \vert \leq
\frac{2}{3}\left \vert x\right \vert }+\int_{0}^{\frac{t}{4}}\int_{\left
\vert y\right \vert >\frac{2}{3}\left \vert x\right \vert }\right) \left(
\cdots \right) dyd\tau =I_{11}+I_{12}\hbox{.}
\end{equation*}%
If $\left \vert y\right \vert \leq \frac{2}{3}\left \vert x\right \vert $,
then we have
\begin{equation*}
\left \vert x-y\right \vert \geq \left \vert x\right \vert -\left \vert
y\right \vert \geq \frac{\left \vert x\right \vert }{3}\hbox{,}
\end{equation*}%
and thus
\begin{eqnarray*}
I_{11} &\lesssim &\left( 1+t\right) ^{-1}\bigg( 1+\frac{\left \vert x\right
\vert ^{2}}{1+t}\bigg) ^{-1}\int_{0}^{\frac{t}{4}}\int_{\mathbb{R}%
^{3}}\left( 1+\tau \right) ^{-4}\bigg( 1+\frac{\left( \left \vert y\right
\vert -{\rm c}\tau \right) ^{2}}{1+\tau }\bigg) ^{-2}dyd\tau \\
&\lesssim &\left( 1+t\right) ^{-1}\bigg( 1+\frac{\left \vert x\right \vert
^{2}}{1+t}\bigg) ^{-1}\int_{0}^{\frac{t}{4}}\left( 1+\tau \right)
^{-4}\left( 1+\tau \right) ^{\frac{5}{2}}d\tau
\lesssim \left( 1+t\right) ^{-1}\bigg( 1+\frac{\left \vert x\right \vert
^{2}}{1+t}\bigg) ^{-1}\hbox{.}
\end{eqnarray*}%
If $\left \vert y\right \vert >\frac{2}{3}\left \vert x\right \vert $ and $%
0\leq \tau \leq \frac{t}{4}$,
\begin{equation*}
\left \vert y\right \vert -{\rm c}\tau \geq \frac{2}{3}\left \vert
x\right \vert -{\rm c}\tau \geq \frac{\left \vert x\right \vert }{3}+%
\frac{1}{3}\left( {\rm c}t-\sqrt{1+t}\right) -\frac{{\rm c}t}{4}\geq
\frac{\left \vert x\right \vert }{3}+\frac{1}{24}{\rm c}t
\end{equation*}%
$\allowbreak \allowbreak \allowbreak $for $t\geq \frac{32}{{\rm c}^2}$. Note that $%
\left( x,t\right) \in D_{2}$ with $t\geq \frac{2+2\sqrt{1+{\rm c}^{2}}}{%
{\rm c}^{2}}$ implies that $\left \vert x\right \vert
\geq \sqrt{1+t}$. Hence for $t\geq \frac{32}{{\rm c}^2}$,
\begin{eqnarray*}
I_{12} &\lesssim &(1+t)^{-1}\left( 1+\left \vert x\right \vert ^{2}\right) ^{-1}\int_{0}^{\frac{t}{4}}\int_{\left \vert y\right \vert >\frac{2}{3}\left
\vert x\right \vert }\left( 1+t-\tau \right) ^{-1}\bigg( 1+\frac{\left
\vert x-y\right \vert ^{2}}{ 1+t-\tau  }\bigg) ^{-1} \\
&&\cdot \left( 1+\tau \right) ^{-1}\bigg( 1+\frac{\left( \left
\vert y\right \vert -{\rm c}\tau \right) ^{2}}{1+\tau }\bigg) ^{-1}dyd\tau \\
&\lesssim &\left( 1+t \right) ^{-1}\left( 1+\left \vert x\right \vert ^{2}\right) ^{-1}\int_{0}^{\frac{t}{4}}(1+t-\tau)^{-1}(1+t-\tau)^{\frac{3}{4}}(1+\tau)^{-1}(1+\tau)^{-\frac{5}{4}}d\tau \\
&\lesssim &\left( 1+t \right) ^{-1}\left( 1+\left \vert x\right \vert ^{2}\right) ^{-1}(1+t)\\
&\lesssim& (1+t)^{-1}\bigg( 1+\frac{\left \vert
x\right \vert ^{2}}{1+t}\bigg) ^{-1}\hbox{,}
\end{eqnarray*}%
and for $0\leq t\leq 10$,
\begin{eqnarray*}
I_{12} &\lesssim &\left( 1+t\right) ^{-1}\int_{0}^{\frac{t}{4}%
}\int_{\mathbb{R}^{3}}\left( 1+\tau \right) ^{-4}\left( 1+\frac{\left( \left
\vert y\right \vert -{\rm c}\tau \right) ^{2}}{1+\tau }\right)
^{-2}dyd\tau \\
&\lesssim &C\lesssim \left( 1+t\right) ^{-2}\bigg( 1+\frac{\left( \left
\vert x\right \vert -{\rm c}t\right) ^{2}}{1+t}\bigg) ^{-1}\hbox{.}
\end{eqnarray*}%
Thereupon,
\begin{equation*}
K_4\lesssim \left( 1+t\right) ^{-1}\bigg( 1+\frac{\left \vert x\right \vert
^{2}}{1+t}\bigg) ^{-1}+\left( 1+t\right) ^{-2}\bigg( 1+\frac{%
\left( \left \vert x\right \vert -{\rm c}t\right) ^{2}}{1+t}\bigg) ^{-1}%
\hbox{.}
\end{equation*}
\newline
\textbf{Case 3:} $\left( x,t\right) \in D_{3}$. We split the integral $I$
into two parts%
\begin{equation*}
K_4=\left( \int_{0}^{\frac{t}{2}}+\int_{\frac{t}{2}}^{t}\right) \int_{\mathbb{R%
}^{3}}\left( \cdots \right) dyd\tau =:I_{1}+I_{2}\hbox{.}
\end{equation*}

For $I_{1}$, we decompose $\mathbb{R}^{3}$ into two parts as follows:
\begin{equation*}
I_{1}=\int_{0}^{\frac{t}{2}}\left( \int_{\left \vert y\right \vert \leq
\frac{2}{3}\left \vert x\right \vert }+\int_{\left \vert y\right \vert >%
\frac{2}{3}\left \vert x\right \vert }\right) \left( \ldots \right) dyd\tau
=:I_{11}+I_{12}\hbox{.}
\end{equation*}%
If $\left \vert y\right \vert \leq \frac{2}{3}\left \vert x\right \vert $,
then
\begin{equation*}
\left \vert x-y\right \vert \geq \frac{\left \vert x\right \vert }{3}\hbox{,}
\end{equation*}%
and thus%
\begin{eqnarray*}
I_{11} &\lesssim &\left( 1+t\right) ^{-1}\bigg( 1+\frac{\left \vert x\right
\vert ^{2}}{1+t}\bigg) ^{-1}\int_{0}^{\frac{t}{2}}\int_{\mathbb{R}%
^{3}}\left( 1+\tau \right) ^{-4}\bigg( 1+\frac{\left( \left \vert y\right
\vert -{\rm c}\tau \right) ^{2}}{1+\tau }\bigg) ^{-2}dyd\tau \\
&\lesssim &\left( 1+t\right) ^{-1}\bigg( 1+\frac{\left \vert x\right \vert
^{2}}{1+t}\bigg) ^{-1}\hbox{.}
\end{eqnarray*}%
If $\left \vert y\right \vert >\frac{2}{3}\left \vert x\right \vert $ and $%
0\leq \tau \leq \frac{t}{2}$, we have%
\begin{equation*}
\left \vert y\right \vert -{\rm c}\tau \geq \frac{2}{3}\left \vert
x\right \vert -\frac{{\rm c}t}{2}=\frac{1}{6}\left \vert x\right \vert +%
\frac{1}{2}\left( \left \vert x\right \vert -{\rm c}t\right) \geq \frac{1%
}{6}{\rm c}t\hbox{,}
\end{equation*}%
so that%
\begin{eqnarray*}
I_{12} &\lesssim &\big( 1+\left \vert x\right \vert ^{2}\big) ^{\!-1}\!\int_{0}^{\frac{t}{2}}\!\int_{\left \vert y\right \vert >\frac{2}{3}\left
\vert x\right \vert }\left( 1\!+\!t\!-\!\tau \right) ^{-1}\left( 1+\frac{\left
\vert x-y\right \vert ^{2}}{ 1\!+\!t\!-\!\tau  }\right) ^{-1} 
\left( 1+\tau \right) ^{-3}\left( 1\!+\!\frac{\left( \left
\vert y\right \vert -{\rm c}\tau \right) ^{2}}{1+\tau }\right) ^{\!-1}\!\!dyd\tau  \\
&\lesssim &\big( 1+\left \vert x\right \vert ^{2}\big) ^{-1}\int_{0}^{\frac{t}{2}}(1+t-\tau)^{-1}(1+t-\tau)^{\frac{3}{4}}(1+\tau)^{-3}(1+\tau)^{\frac{5}{4}}d\tau \\
&\lesssim &(1+t)^{-\frac{1}{4}}\big( 1+\left \vert x\right \vert ^{2}\big) ^{-1}\lesssim (1+t)^{-\frac{5}{4}}\bigg( 1+\frac{\left \vert x\right
\vert ^{2}}{1+t}\bigg) ^{-1},
\end{eqnarray*}%
since $\left \vert x\right \vert \geq \sqrt{1+t}$. Therefore,%
\begin{equation*}
I_{1}\lesssim \left( 1+t\right) ^{-1}\bigg( 1+\frac{\left \vert x\right
\vert ^{2}}{1+t}\bigg) ^{-1}\hbox{.}
\end{equation*}

For $I_{2}$, we decompose $\mathbb{R}^{3}$ into two parts
\begin{equation*}
I_{2}=\int_{\frac{t}{2}}^{t}\left( \int_{\left \vert y\right \vert -\mathbf{c%
}\tau \leq \frac{\left \vert x\right \vert -ct}{2}}+\int_{\left \vert
y\right \vert -{\rm c}\tau >\frac{\left \vert x\right \vert -ct}{2}%
}\right) \left( \cdots \right) dyd\tau =I_{21}+I_{22}\hbox{. }
\end{equation*}%
If $\left \vert y\right \vert -{\rm c}\tau \leq \frac{\left \vert
x\right
\vert -{\rm c}t}{2}$, we have
\begin{equation*}
\left \vert x-y\right \vert \geq \left \vert x\right \vert - \frac{%
\left \vert x\right \vert -{\rm c}t}{2} -{\rm c}\tau \geq \frac{%
\left \vert x\right \vert -{\rm c}t}{2}\hbox{,}
\end{equation*}%
and thus%
\begin{eqnarray*}
I_{21} &\lesssim &\left( 1+\left( \left \vert x\right \vert -{\rm c}%
t\right) ^{2}\right) ^{-1}\int_{\frac{t}{2}}^{t}\int_{\mathbb{R}%
^{3}}\left( 1+\tau \right) ^{-4}\bigg( 1+\frac{\left( \left
\vert y\right \vert -{\rm c}\tau \right) ^{2}}{1+\tau }\bigg)
^{-2}dyd\tau \\
&\lesssim &\left( 1+\left( \left \vert x\right \vert -{\rm c}%
t\right) ^{2}\right) ^{-1}\int_{\frac{t}{2}}^t(1+\tau)^{-4}(1+\tau)^{\frac{5}{2}}d\tau \\
&\lesssim &(1+t)^{-\frac{1}{2}}\big( 1+\left( \left \vert x\right \vert -{\rm c}%
t\right) ^{2}\big) ^{-1}\lesssim (1+t)^{-\frac{3}{2}}\bigg(1+\frac{(|x|-ct)^2}{1+t}\bigg)^{-1}\hbox{,}
\end{eqnarray*}%
since $|x|-ct>\sqrt{1+t}$.
\newline
If $\left \vert y\right \vert -{\rm c}\tau >\frac{\left \vert
x\right
\vert -{\rm c}t}{2}$, then
\begin{eqnarray*}
I_{22} &\lesssim &\left( 1+t\right) ^{-4}\bigg( 1+\frac{\left( \left \vert
x\right \vert -{\rm c}t\right) ^{2}}{1+t}\bigg) ^{-1}\\
&& \times\int_{\frac{t}{2}%
}^{t}\int_{\mathbb{R}^3}\left( 1+t-\tau \right) ^{-1}\bigg( 1+\frac{\left \vert x-y\right
\vert ^{2}}{ 1+t-\tau }\bigg) ^{-1} \bigg( 1+\frac{\left( \left
\vert y\right \vert -{\rm c}\tau \right) ^{2}}{1+\tau }\bigg)
^{-1}   dyd\tau \\
&\lesssim &\left( 1+t\right) ^{-4}\bigg( 1+\frac{\left( \left \vert x\right
\vert -{\rm c}t\right) ^{2}}{1+t}\bigg) ^{-1}\int_{\frac{t}{2}%
}^{t}(1+t-\tau)^{-1} \left( 1+t-\tau \right)^{\frac{3}{4}}(1+\tau)^{\frac{5}{4}} d\tau \\
& \lesssim& \left( 1+t\right) ^{-2}\bigg( 1+\frac{\left( \left \vert x\right
\vert -{\rm c}t\right) ^{2}}{1+t}\bigg) ^{-1}\hbox{.}
\end{eqnarray*}%
Therefore,
\begin{equation*}
I_{2}\lesssim \left( 1+t\right) ^{-3/2}\bigg( 1+\frac{\left( \left \vert
x\right \vert -{\rm c}t\right) ^{2}}{1+t}\bigg) ^{-1}\hbox{.}
\end{equation*}%
Combining all the estimates, we get desired estimate%
\begin{equation*}
K_4\lesssim \left( 1+t\right) ^{-1}\bigg( 1+\frac{\left \vert x\right \vert
^{2}}{1+t}\bigg) ^{-1}+\left( 1+t\right) ^{-3/2}\bigg( 1+\frac{%
\left( \left \vert x\right \vert -{\rm c}t\right) ^{2}}{1+t}\bigg) ^{-1}\hbox{.}
\end{equation*}
\newline
\textbf{Case 4:} $\left( x,t\right) \in D_{4}$. In this region $\sqrt{1+t}%
\leq \left \vert x\right \vert \leq \frac{{\rm c}t}{2}$. We split the
integral $K_4 $ into three parts:%
\begin{eqnarray*}
K_4&=&\int_{0}^{t}\int_{\mathbb{R}^{3}}\left( 1+t-\tau \right) ^{-4}\left( 1+%
\frac{\left( \left \vert x-y\right \vert -{\rm c}\left( t-\tau \right)
\right) ^{2}}{1+t-\tau }\right) ^{-2}\left( 1+\tau \right) ^{-1}\left( 1+%
\frac{\left \vert y\right \vert ^{2}}{ 1+\tau }\right) ^{-1}dyd\tau \\
&=&\left( \int_{0}^{\frac{t}{2}-\frac{\left \vert x\right \vert }{2{\rm c}%
}}+\int_{\frac{t}{2}-\frac{\left \vert x\right \vert }{2{\rm c}}}^{t-%
\frac{\left \vert x\right \vert }{2{\rm c}}}+\int_{t-\frac{\left \vert
x\right \vert }{2{\rm c}}}^{t}\right) \int_{\mathbb{R}^{3}}\left( \cdots
\right) dyd\tau =:I_{1}+I_{2}+I_{3}\hbox{.}
\end{eqnarray*}

For $I_{1}$, we decompose $\mathbb{R}^{3}$ into two parts%
\begin{equation*}
I_{1}=\int_{0}^{\frac{t}{2}-\frac{\left \vert x\right \vert }{2{\rm c}}%
}\left( \int_{\left \vert y\right \vert \leq \frac{{\rm c}\left( t-\tau
\right) -\left \vert x\right \vert }{2}}+\int_{\left \vert y\right \vert >%
\frac{{\rm c}\left( t-\tau \right) -\left \vert x\right \vert }{2}%
}\right) =:I_{11}+I_{12}\hbox{.}
\end{equation*}%
If $0\leq \tau \leq \frac{t}{2}-\frac{\left \vert x\right \vert }{2{\rm c}%
}$ and $\left \vert y\right \vert \leq \frac{{\rm c}\left( t-\tau \right)
-\left \vert x\right \vert }{2}$, then%
\begin{equation*}
{\rm c}\left( t-\tau \right) -\left \vert x-y\right \vert \geq {\rm c}%
\left( t-\tau \right) -\left \vert x\right \vert -\left \vert y\right \vert
\geq \frac{{\rm c}t-\left \vert x\right \vert }{4}\hbox{,}
\end{equation*}%
so that
\begin{eqnarray*}
I_{11} &\lesssim &\left( 1+t\right) ^{-1}\bigg( 1+%
\frac{\left( {\rm c}t-\left \vert x\right \vert \right) ^{2}}{1+t}\bigg)
^{-1}\\
&&\times\int_{0}^{\frac{t}{2}}\int_{\mathbb{R}^3}(1+t-\tau)^{-3}\bigg( 1+%
\frac{\left( \left \vert x-y\right \vert -{\rm c}\left( t-\tau \right)
\right) ^{2}}{1+t-\tau }\bigg) ^{-1}\left( 1+\tau \right) ^{-1}\left( 1+%
\frac{\left \vert y\right \vert ^{2}}{ 1+\tau }\right) ^{-1}dyd\tau \\
&\lesssim &\left( 1+t\right) ^{-1}\bigg( 1+%
\frac{\left( {\rm c}t-\left \vert x\right \vert \right) ^{2}}{1+t}\bigg)
^{-1}\int_0^{\frac{t}{2}}(1+t-\tau)^{-3}(1+t-\tau)^{-\frac{5}{4}}(1+\tau)^{-1}(1+\tau)^{\frac{3}{4}}d\tau\\
&\lesssim& \left( 1+t\right) ^{-1}\bigg( 1+%
\frac{\left( {\rm c}t-\left \vert x\right \vert \right) ^{2}}{1+t}\bigg)
^{-1}(1+t)^{-1}\lesssim \left( 1+t\right) ^{-2}\bigg( 1+%
\frac{\left( {\rm c}t-\left \vert x\right \vert \right) ^{2}}{1+t}\bigg)^{-1}\hbox{.}
\end{eqnarray*}%
\ If $0\leq \tau \leq \frac{t}{2}-\frac{\left \vert x\right \vert }{2{\rm c}}$ and $\left \vert y\right \vert >\frac{{\rm c}\left( t-\tau \right)
-\left \vert x\right \vert }{2}$, then%
\begin{equation*}
\left \vert y\right \vert >\frac{{\rm c}\left( t-\tau \right) -\left
\vert x\right \vert }{2}\geq \frac{{\rm c}t-\left \vert x\right \vert }{4}%
\hbox{,}
\end{equation*}%
and thus%
\begin{eqnarray*}
I_{12} &\lesssim &\left( 1+t\right) ^{-1}\bigg( 1+%
\frac{\left( {\rm c}t-\left \vert x\right \vert \right) ^{2}}{1+t}\bigg)
^{-1}\\
&&\times\int_{0}^{\frac{t}{2}-\frac{|x|}{c}}\int_{\mathbb{R}^3}(1+t-\tau)^{-4}\bigg( 1+%
\frac{\left( \left \vert x-y\right \vert -{\rm c}\left( t-\tau \right)
\right) ^{2}}{1+t-\tau }\bigg) ^{-2}dyd\tau \\
&\lesssim& \left( 1+t\right) ^{-1}\bigg( 1+%
\frac{\left( {\rm c}t-\left \vert x\right \vert \right) ^{2}}{1+t}\bigg)
^{-1}(1+t)^{-\frac{1}{2}}\lesssim \left( 1+t\right) ^{-\frac{3}{2}}\bigg( 1+%
\frac{\left( {\rm c}t-\left \vert x\right \vert \right) ^{2}}{1+t}\bigg)^{-1}\hbox{.}
\end{eqnarray*}%
Therefore,%
\begin{equation*}
I_{1}\lesssim \left( 1+t\right) ^{-\frac{5}{2}}\bigg( 1+\frac{\left( {\rm c}%
t-\left \vert x\right \vert \right) ^{2}}{1+t}\bigg) ^{-1}\hbox{.}
\end{equation*}

For $I_{2}$, we have to we decompose $\mathbb{R}^{3}$ into two parts%
\begin{equation*}
I_{2}=\int_{\frac{t}{2}-\frac{\left \vert x\right \vert }{2%
{\rm c}}}^{t-\frac{\left \vert x\right \vert }{2{\rm c}}%
}\left( \int_{\left \vert y\right \vert \leq {\rm c}\tau}+\int_{\left \vert y\right \vert >%
{\rm c}\tau}\right) =:I_{21}+I_{22}\hbox{.}
\end{equation*}%
We use the spherical coordinates to obtain
\begin{eqnarray*}
I_{21} &\lesssim &\int_{\frac{t}{2}-\frac{\left \vert x\right \vert }{2%
{\rm c}}}^{t-\frac{\left \vert x\right \vert }{2{\rm c}}%
}\int_{0}^{c\tau}\int_{0}^{\pi }\left( 1+t-\tau \right) ^{-4}\bigg( 1+%
\frac{\big( \sqrt{\left \vert x\right \vert ^{2}+r^{2}-2r\left \vert
x\right \vert \cos \theta }-{\rm c}\left( t-\tau \right) \big) ^{2}}{%
1+t-\tau }\bigg) ^{-2} \\
&&\cdot \left( 1+\tau \right) ^{-1}\bigg( 1+\frac{\left \vert r\right \vert
^{2}}{ 1+\tau }\bigg) ^{-1}r^{2}\sin \theta d\theta drd\tau \\
&\lesssim &\int_{\frac{t}{2}-\frac{\left \vert x\right \vert }{2{\rm c}}%
}^{t-\frac{\left \vert x\right \vert }{2{\rm c}}}\int_{0}^{c\tau}\int_{\left \vert \left \vert x\right \vert -r\right \vert }^{\left \vert
x\right \vert +r}\left( 1+t-\tau \right) ^{-4}\bigg( 1+\frac{\left( z-%
{\rm c}\left( t-\tau \right) \right) ^{2}}{1+t-\tau }\bigg) ^{-2}z(
1+\tau ) ^{-1} \\
&&\cdot \left( 1+\frac{r^{2}}{ 1+\tau }\right) ^{-1}%
\frac{r}{\left \vert x\right \vert }dzdrd\tau \\
&\lesssim &\left \vert x\right \vert ^{-1}\int_{\frac{t}{2}-\frac{\left
\vert x\right \vert }{2{\rm c}}}^{t-\frac{\left \vert x\right \vert }{2%
{\rm c}}}\int_{0}^{c\tau}\left( 1+t-\tau \right) ^{-4+\frac{3}{2}%
}\left( 1+\tau \right) ^{-1}\cdot r\left( 1+\frac{r^{2}}{ 1+\tau }\right) ^{-1}drd\tau
\\
&\lesssim &\left \vert x\right \vert ^{-1}\int_{\frac{t}{2}-\frac{\left
\vert x\right \vert }{2{\rm c}}}^{t-\frac{\left \vert x\right \vert }{2%
{\rm c}}}\left( 1+t-\tau \right) ^{-\frac{5}{2}}\ln( 1+{\rm c}\tau)d\tau \\
&\lesssim &\left \vert x\right \vert ^{-1}\big(1+\frac{|x|}{2c}\big)^{-1}\int_{\frac{t}{2}-\frac{\left
\vert x\right \vert }{2{\rm c}}}^{t-\frac{\left \vert x\right \vert }{2%
{\rm c}}}\left( 1+t-\tau \right) ^{-\frac{3}{2}}\ln( 1+{\rm c}\tau)d\tau \\
&\lesssim &\left( 1+\left \vert x\right \vert \right)
^{-2}\lesssim \left( 1+t\right) ^{-1}\bigg( 1+\frac{\left \vert x\right
\vert ^{2}}{1+t}\bigg) ^{-1}\hbox{,}
\end{eqnarray*}%
where we used $z=\sqrt{\left \vert x\right \vert ^{2}+r^{2}-2r\left \vert
x\right \vert \cos \theta }$ and $\sin \theta d\theta =\frac{z}{r\left \vert
x\right \vert }dz$. 

For $I_{22}$, since $|y|>{\rm c}\tau$, one has
\begin{align*}
|y|>{\rm c}\tau>{\rm c}\big(\frac{t}{2}-\frac{|x|}{2{\rm c}}\big)=\frac{{\rm c}t-|x|}{2}\geq\frac{|x|}{2},
\end{align*}
which gives that
\begin{eqnarray*}
I_{22}&\lesssim& (1+t)^{-1}\big(1+\frac{|x|^2}{1+t}\big)^{-1}\int_{\frac{t}{2}-\frac{\left \vert x\right \vert }{2%
{\rm c}}}^{t-\frac{\left \vert x\right \vert }{2{\rm c}}%
}(1+t-\tau)^{-4}\left( 1+%
\frac{\left( \left \vert x-y\right \vert -{\rm c}\left( t-\tau \right)
\right) ^{2}}{1+t-\tau }\right) ^{-2}dyd\tau\nonumber\\
&\lesssim& (1+t)^{-1}\big(1+\frac{|x|^2}{1+t}\big)^{-1}.
\end{eqnarray*}

For $I_{3}$, we decompose $\mathbb{R}^{3}$ into two parts%
\begin{equation*}
I_{3}=\int_{t-\frac{\left \vert x\right \vert }{2{\rm c}}}^{t}\left(
\int_{\left \vert y\right \vert \leq \frac{\left \vert x\right \vert -%
{\rm c}\left( t-\tau \right) }{2}}+\int_{\left \vert y\right \vert >\frac{%
\left \vert x\right \vert -{\rm c}\left( t-\tau \right) }{2}}\right)
=:I_{31}+I_{32}\hbox{.}
\end{equation*}%
If $t-\frac{\left \vert x\right \vert }{2{\rm c}}\leq \tau \leq t$, $%
\left \vert y\right \vert \leq \frac{\left \vert x\right \vert -{\rm c}%
\left( t-\tau \right) }{2}$, then
\begin{equation*}
\left \vert x-y\right \vert -{\rm c}\left( t-\tau \right) \geq \left
\vert x\right \vert -\left \vert y\right \vert -{\rm c}\left( t-\tau
\right) \geq \frac{\left \vert x\right \vert -{\rm c}\left( t-\tau
\right) }{2}\geq \frac{\left \vert x\right \vert }{4}\hbox{.}
\end{equation*}%
If $t-\frac{\left \vert x\right \vert }{2{\rm c}}\leq \tau \leq t$, $%
\left \vert y\right \vert >\frac{\left \vert x\right \vert -{\rm c}\left(
t-\tau \right) }{2}$, then%
\begin{equation*}
\left \vert y\right \vert >\frac{\left \vert x\right \vert -{\rm c}\left(
t-\tau \right) }{2}\geq \frac{\left \vert x\right \vert }{4}\hbox{.}
\end{equation*}%
Hence, since $\frac{{\rm c}t}{2}\geq\left \vert x\right \vert \geq \sqrt{1+t}$, it holds that 
\begin{eqnarray*}
I_{31} &\lesssim &(1+t)^{-1}\left( 1+\frac{|x|^2}{1+t}\right) ^{-1}\\
&&\cdot\int_{t-\frac{\left \vert x\right \vert }{2{\rm c}}}^{t}\int_{\left
\vert y\right \vert \leq \frac{\left \vert x\right \vert \!-\!{\rm c}\left(
t\!-\!\tau \right) }{2}}\left( 1\!+\!t\!-\!\tau \right) ^{-3}\left( 1+\frac{\left( \left \vert x\!-\!y\right \vert \!-\!{\rm c}%
\left( t-\tau \right) \right) ^{2}}{1+t-\tau }\right) ^{\!-1}\!\!\!(1\!+\!\tau)^{-1}\big(1+\frac{|y|^2}{1+\tau}\big)^{-1}\!dyd\tau \\
&\lesssim &(1+t)^{-1}\left( 1+\frac{|x|^2}{1+t}\right) ^{-1}\int_{t-\frac{\left \vert x\right \vert }{2{\rm c}}}^{t}(1+t-\tau)^{-\frac{7}{4}}(1+\tau)^{-\frac{1}{4}}d\tau\\
&\lesssim& \left( 1+t\right) ^{-\frac{5}{4}}\bigg( 1+\frac{\left \vert
x\right \vert ^{2}}{1+t}\bigg) ^{-1}\hbox{,}
\end{eqnarray*}%
 and%
\begin{eqnarray*}
I_{32} &\lesssim &\left( 1+t\right) ^{-1}\left( 1+\frac{\left \vert x\right
\vert ^{2}}{ 1+t }\right) ^{-1}\int_{t-\frac{\left
\vert x\right \vert }{2{\rm c}}}^{t}\int_{\mathbb{R}^{3}}\left( 1+t-\tau
\right) ^{-4}\left( 1+\frac{\left( \left \vert x-y\right \vert -{\rm c}%
\left( t-\tau \right) \right) ^{2}}{1+t-\tau }\right) ^{-2}dyd\tau \\
&\lesssim &\left( 1+t\right) ^{-1}\bigg( 1+\frac{\left \vert x\right \vert
^{2}}{1+t}\bigg) ^{-1}\hbox{.}
\end{eqnarray*}%
Combining all the estimates yields
\begin{equation*}
K_4\lesssim \left( 1+t\right) ^{-2}\bigg( 1+\frac{\left \vert x\right \vert
^{2}}{1+t }\bigg) ^{-\frac{3}{2}}+\left( 1+t\right)
^{-5/2}\bigg( 1+\frac{\left( {\rm c}t-\left \vert x\right \vert \right)
^{2}}{1+t}\bigg) ^{-1}\hbox{.}
\end{equation*}
\newline
\textbf{Case 5:} $\left( x,t\right) \in D_{5}$. In this region $%
\frac{{\rm c}t}{2}\leq \left \vert x\right \vert \leq {\rm c}t-\sqrt{1+t}$. We split $K_4$ into three parts%
\begin{eqnarray*}
K_4&=&\int_{0}^{t}\int_{\mathbb{R}^{3}}\left( 1+t-\tau \right) ^{-4}\bigg( 1+%
\frac{\left( \left \vert x-y\right \vert -{\rm c}\left( t-\tau \right)
\right) ^{2}}{1+t-\tau }\bigg) ^{-2}\left( 1+\tau \right) ^{-2 }\bigg( 1+%
\frac{\left \vert y\right \vert ^{2}}{ 1+\tau}\bigg) ^{-1}
dyd\tau \\
&=&\left( \int_{0}^{\frac{t}{2}-\frac{\left \vert x\right \vert }{2{\rm c}%
}}+\int_{\frac{t}{2}-\frac{\left \vert x\right \vert }{2{\rm c}}}^{\frac{1%
}{2}\left( t+\frac{3}{2}\left( t-\frac{\left \vert x\right \vert }{{\rm c}%
}\right) \right) }+\int_{\frac{1}{2}\left( t+\frac{3}{2}\left( t-\frac{\left
\vert x\right \vert }{{\rm c}}\right) \right) }^{t}\right) \int_{\mathbb{R%
}^{3}}\left( \cdots \right) dyd\tau =:I_{1}+I_{2}+I_{3}\hbox{.}
\end{eqnarray*}

For $I_{1}$, the estimate is the same as the $I_{1}$ of Case 4, so
\begin{equation*}
I_{1}\lesssim \left( 1+t\right) ^{-\frac{3}{2}}\bigg( 1+\frac{\left( {\rm c}%
t-\left \vert x\right \vert \right) ^{2}}{1+t}\bigg) ^{-1}\hbox{.}
\end{equation*}

For $I_{2}$, we have to we decompose $\mathbb{R}^{3}$ into two parts%
\begin{equation*}
I_{2}=\int_{\frac{t}{2}-\frac{\left \vert x\right \vert }{2{\rm c}}}^{\frac{1%
}{2}\left( t+\frac{3}{2}\left( t-\frac{\left \vert x\right \vert }{{\rm c}%
}\right) \right) }\left( \int_{\left \vert y\right \vert \leq {\rm c}(t-\tau)}+\int_{\left \vert y\right \vert >%
{\rm c}(t-\tau)}\right) =:I_{21}+I_{22}\hbox{.}
\end{equation*}%
In terms of $I_{21}$, similar to the $I_{21}$ of Case 4, we use the spherical coordinates to obtain%
\begin{eqnarray*}
I_{21} &\lesssim &\int_{\frac{t}{2}-\frac{\left \vert x\right \vert }{2{\rm c}}}^{\frac{1%
}{2}\left( t+\frac{3}{2}\left( t-\frac{\left \vert x\right \vert }{{\rm c}%
}\right) \right) }\int_{0}^{{\rm c}(t-\tau)}\int_{0}^{\pi }\left( 1+t-\tau \right) ^{-4}\Bigg( 1+%
\frac{\big( \sqrt{\left \vert x\right \vert ^{2}+r^{2}-2r\left \vert
x\right \vert \cos \theta }-{\rm c}\left( t-\tau \right) \big)^{2}}{%
1+t-\tau }\Bigg)^{-2} \\
&&\cdot \left( 1+\tau \right) ^{-1}\bigg( 1+\frac{\left \vert r\right \vert
^{2}}{ 1+\tau }\bigg) ^{-1}r^{2}\sin \theta d\theta drd\tau \\
&\lesssim &\int_{\frac{t}{2}-\frac{\left \vert x\right \vert }{2{\rm c}}}^{\frac{1%
}{2}\left( t+\frac{3}{2}\left( t-\frac{\left \vert x\right \vert }{{\rm c}%
}\right) \right) }\int_{0}^{{\rm c}(t-\tau)}\int_{\left \vert \left \vert x\right \vert -r\right \vert }^{\left \vert
x\right \vert +r}\left( 1+t-\tau \right) ^{-4}\left( 1+\frac{\left( z-%
{\rm c}\left( t-\tau \right) \right) ^{2}}{1+t-\tau }\right) ^{-2}z\left(
1+\tau \right) ^{-1} \\
&&\cdot \bigg( 1+\frac{r^{2}}{ 1+\tau }\bigg) ^{-1}%
\frac{r}{\left \vert x\right \vert }dzdrd\tau \\
&\lesssim &\left \vert x\right \vert ^{-1}\int_{\frac{t}{2}-\frac{\left
\vert x\right \vert }{2{\rm c}}}^{t-\frac{\left \vert x\right \vert }{2%
{\rm c}}}\int_{0}^{{\rm c}(t-\tau)}\left( 1+t-\tau \right) ^{-4+\frac{3}{2}%
}\left( 1+\tau \right) ^{-1}\cdot r\left( 1+\frac{r^{2}}{ 1+\tau }\right) ^{-1}drd\tau
\\
&\lesssim &\left \vert x\right \vert ^{-1}\int_{\frac{t}{2}-\frac{\left \vert x\right \vert }{2{\rm c}}}^{\frac{1%
}{2}\left( t+\frac{3}{2}\left( t-\frac{\left \vert x\right \vert }{{\rm c}%
}\right) \right) }\left( 1+t-\tau \right) ^{-\frac{5}{2}}\ln( 1+{\rm c}(t-\tau))d\tau \\
&\lesssim &\left \vert x\right \vert ^{-1}\big(1+\frac{|x|}{2{\rm c}}\big)^{-1}\int_{\frac{t}{2}-\frac{\left \vert x\right \vert }{2{\rm c}}}^{\frac{1%
}{2}\left( t+\frac{3}{2}\left( t-\frac{\left \vert x\right \vert }{{\rm c}%
}\right) \right) }\left( 1+t-\tau \right) ^{-\frac{3}{2}}\ln( 1+{\rm c}(t-\tau))d\tau \\
&\lesssim &\left( 1+\left \vert x\right \vert \right)
^{-2}\lesssim \left( 1+t\right) ^{-1}\bigg( 1+\frac{\left \vert x\right
\vert ^{2}}{1+t}\bigg) ^{-1}\hbox{.}
\end{eqnarray*}%
\vspace {3mm}%
For $I_{22}$, since $|y|>{\rm c}(t-\tau)$, one has
\begin{align*}
|y|>{\rm c}(t-\tau)\geq {\rm c}\bigg(\frac{t}{2}-\frac{3}{4}\bigg(t-\frac{|x|}{{\rm c}}\bigg)\bigg)\geq {\rm c}\bigg(\frac{t}{2}-\frac{3}{4}\bigg(t-\frac{{\rm c}t/2}{2}\bigg)\bigg)=\frac{{\rm c}t}{8}>\frac{|x|}{8},
\end{align*}
which gives that
\begin{eqnarray*}
I_{22}&\lesssim& (1+t)^{-1}\bigg(1+\frac{|x|^2}{1+t}\bigg)^{-1}\int_{\frac{t}{2}-\frac{\left \vert x\right \vert }{2{\rm c}}}^{\frac{1%
}{2}\left( t+\frac{3}{2}\left( t-\frac{\left \vert x\right \vert }{{\rm c}%
}\right) \right) }(1+t-\tau)^{-4}\bigg( 1+%
\frac{\left( \left \vert x-y\right \vert -{\rm c}\left( t-\tau \right)
\right) ^{2}}{1+t-\tau }\bigg) ^{-2}dyd\tau\nonumber\\
&\lesssim& (1+t)^{-1}\bigg(1+\frac{|x|^2}{1+t}\bigg)^{-1}.
\end{eqnarray*}

For $I_{3}$, we decompose $\mathbb{R}^{3}$ into two parts%
\begin{equation*}
I_{3}=\int_{\frac{1}{2}\big( t+\frac{3}{2}\big( t-\frac{\left \vert
x\right \vert }{{\rm c}}\big) \big) }^{t}\left( \int_{\left \vert
y\right \vert \leq \frac{\left \vert x\right \vert -{\rm c}\left( t-\tau
\right) }{2}}+\int_{\left \vert y\right \vert >\frac{\left \vert x\right
\vert -{\rm c}\left( t-\tau \right) }{2}}\right) \left( \cdots \right)
dyd\tau =:I_{31}+I_{32}\hbox{.}
\end{equation*}%
If $\frac{1}{2}\big( t+\frac{3}{2}\big( t-\frac{\left \vert x\right \vert
}{{\rm c}}\big) \big) \leq \tau \leq t$ and $\left \vert y\right \vert
\leq \frac{\left \vert x\right \vert -{\rm c}\left( t-\tau \right) }{2}$,
then
\begin{equation*}
\left \vert x-y\right \vert -{\rm c}\left( t-\tau \right) \geq \frac{%
\left \vert x\right \vert -{\rm c}\left( t-\tau \right) }{2}\geq \frac{%
{\rm c}t+\left \vert x\right \vert }{8}= \frac{{\rm c}t-\left \vert
x\right \vert }{8}+\frac{|x|}{4}\hbox{.}
\end{equation*}%
If $\frac{1}{2}\big( t+\frac{3}{2}\big( t-\frac{\left \vert x\right \vert
}{{\rm c}}\big) \big) \leq \tau \leq t$ and $\left \vert y\right \vert >%
\frac{\left \vert x\right \vert -{\rm c}\left( t-\tau \right) }{2}$, then
\begin{equation*}
\left \vert y\right \vert >\frac{\left \vert x\right \vert -{\rm c}\left(
t-\tau \right) }{2}\geq \frac{{\rm c}t-\left \vert x\right \vert }{8}+%
\frac{|x|}{4} \hbox{.}
\end{equation*}%
Hence,
\begin{eqnarray*}
I_{31} \!\!\!\!\!\!\!\!&&\lesssim(1+t)^{-1}\left( 1+\frac{|x|^2}{1+t}\right) ^{-1}\int_{\frac{1}{2}\left( t+\frac{3}{2}\left( t-\frac{\left \vert
x\right \vert }{{\rm c}}\right) \right) }^{t}\int_{\left
\vert y\right \vert \leq \frac{\left \vert x\right \vert -{\rm c}\left(
t-\tau \right) }{2}}\left( 1+t-\tau \right) ^{-3}\\
&&\cdot\bigg( 1+\frac{\left( \left \vert x\!-\!y\right \vert \!-\!{\rm c}%
\left( t-\tau \right) \right) ^{2}}{1\!+\!t\!-\!\tau }\bigg) ^{\!\!-1}\!\!\!\!(1\!+\!\tau)^{-1}\big(1+\frac{|y|^2}{1+\tau}\big)^{-1}\!dyd\tau \\
&&\lesssim(1+t)^{-1}\bigg( 1+\frac{|x|^2}{1+t}\bigg) ^{-1}\int_{\frac{1}{2}\left( t+\frac{3}{2}\left( t-\frac{\left \vert
x\right \vert }{{\rm c}}\right) \right) }^{t}(1+t-\tau)^{-\frac{7}{4}}(1+\tau)^{-\frac{1}{4}}d\tau\\
&&\lesssim \left( 1+t\right) ^{-\frac{5}{4}}\bigg( 1+\frac{\left \vert
x\right \vert ^{2}}{1+t}\bigg) ^{-1}\hbox{.}
\end{eqnarray*}%
Similarly, $I_{32}$ can be estimated as follows:
\begin{eqnarray*}
I_{32} &\lesssim &(1+t)^{-1}\bigg( 1+\frac{|x|^2}{1+t}\bigg) ^{-1}\\
&&\cdot\int_{\frac{1}{2}\left( t+\frac{3}{2}\left( t-\frac{\left \vert
x\right \vert }{{\rm c}}\right) \right) }^{t}\int_{\left
\vert y\right \vert \geq \frac{\left \vert x\right \vert \!-\!{\rm c}\left(
t\!-\!\tau \right) }{2}}\left( 1\!+\!t\!-\!\tau \right) ^{-4}\left( 1+\frac{\left( \left \vert x\!-\!y\right \vert \!-\!{\rm c}%
\left( t-\tau \right) \right) ^{2}}{1+t-\tau }\right) ^{\!-2}dyd\tau \\
&\lesssim &(1+t)^{-1}\bigg( 1+\frac{|x|^2}{1+t}\bigg) ^{-1}\hbox{.}
\end{eqnarray*}%
By summing up the estimates in five domains $D_i$ with $i=1,2,3,4,5$, one has
\begin{equation*}
K_4\leq \left( 1+t\right) ^{-1}\bigg( 1+\frac{\left \vert x\right \vert ^{2}}{%
 1+t }\bigg) ^{-\frac{3}{2}}+\left( 1+t\right) ^{-1}\bigg( 1+%
\frac{\left( {\rm c}t-\left \vert x\right \vert \right) ^{2}}{1+t}\bigg)
^{-1}\hbox{,}
\end{equation*}
and has completed the proof of (\ref{6.2}) about Riesz wave-$I$ convolved with Huygens wave. \ \ \ \ \ \ \ \ \  \textsquare

\vspace{5mm}
Finally, we shall establish two nonlinear convolution estimates on the Huygens wave convolved with nonlinear Huygens wave or nonlinear diffusion wave for the non-conservation of the system (\ref{1.1}), which is completely different from the previous results due to the slower decay rate of Huygens wave in Green's function: $(1+t)^{-2}\Big(1+\frac{(|x|-{\rm c}t)^2}{1+t}\Big)^{-N}$ compared with $K_3$ in Lemma \ref{A.4}. 

\begin{lemma}\label{A.6} There exists a constant $C>0$ such that

\noindent 
{\rm (``Huygens\ wave"\ convolved\ with\ ``diffusion\ wave")}
\begin{align}
K_6=&\int_0^t\!\int_{\mathbb{R}^3}(1+t-\tau)^{-2}\Big(1+\frac{(|x-y|-{\rm c}(t-\tau))^2}{1+t-\tau}\Big)^{-2}(1+\tau)^{-3}\Big(1+\frac{|y|^2}{1+\tau}\Big)^{-3}\!\!dyd\tau\nonumber\\
\leq &\ C(1+t)^{-\frac{3}{2}}\Big(1+\frac{|x|^2}{1+t}\Big)^{-\frac{3}{2}}+C(1+t)^{-2}\Big(1+\frac{(|x|-{\rm c}t)^2}{1+t}\Big)^{-1},\label{6.3}
\end{align}
\noindent
{\rm (``Huygens\ wave"\ convolved\ with\ ``Huygens\ wave")}
\begin{align}
K_7=&\int_{0}^{t}\!\int_{\mathbb{R}^3}(1+t-\tau)^{-2}\Big(1+\frac{(|x-y|-{\rm c}(t-\tau))^2}{1+t-\tau}\Big)^{-N}(1+\tau)^{-4}\Big(1+\frac{(|y|-{\rm c}\tau)^2}{1+\tau}\Big)^{-2}dyd\tau\nonumber\\
\leq &\ C(1+t)^{-\frac{3}{2}}\Big(1+\frac{|x|^2}{1+t}\Big)^{-\frac{3}{2}}+C(1+t)^{-2}\Big(1+\frac{(|x|-{\rm c}t)^2}{1+t}\Big)^{-1},\label{6.5}
\end{align}
where the constant $N>0$ can be arbitrarily large.
\end{lemma}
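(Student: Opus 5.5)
Both bounds in Lemma \ref{A.6} will be proved with the same five-region decomposition $D_1,\dots,D_5$ of $(x,t)$ used for $K_4$ in Lemma \ref{A.5}. In each region we split the time integral at $t/2$, or at the characteristic times $\frac t2-\frac{|x|}{2{\rm c}}$ and $t-\frac{|x|}{2{\rm c}}$ in $D_4,D_5$. We then split space according to which factor is forced to be far from its centre.

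Three elementary integrals are the basic tools:
\begin{align*}
&\int_{\mathbb R^3}\Big(1+\frac{(|y|-{\rm c}\tau)^2}{1+\tau}\Big)^{-2}dy\lesssim(1+\tau)^{5/2},\\
&\int_{\mathbb R^3}\Big(1+\frac{(|z|-{\rm c}s)^2}{1+s}\Big)^{-N}dz\lesssim(1+s)^{5/2},\\
&\int_{\mathbb R^3}\Big(1+\frac{|y|^2}{1+\tau}\Big)^{-3}dy\lesssim(1+\tau)^{3/2}.
\end{align*}
We also use H\"older/Young interpolation between such factors, which produces the fractional powers $(1+t-\tau)^{3/4}(1+\tau)^{5/4}$ as in the proof of $K_4$.

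\textbf{$K_6$.} For $\tau\le t/2$ the Green's factor is bounded by $(1+t)^{-2}$ near the cone $|x|\approx{\rm c}t$. The time integral $\int_0^{t/2}(1+\tau)^{-3}(1+\tau)^{3/2}d\tau$ converges, so after splitting $|y|\lessgtr\frac12||x|-{\rm c}t|$ we get $(1+t)^{-2}\big(1+\frac{(|x|-{\rm c}t)^2}{1+t}\big)^{-1}$. On the far part, $(1+|y|^2/(1+\tau))^{-3}$ supplies the spatial decay. For $\tau\ge t/2$ the diffusion factor is bounded by $(1+t)^{-3}(1+|x|^2/(1+t))^{-3}$ when $|y|\ge|x|/2$, and $\int_{t/2}^t(1+t-\tau)^{-2}(1+t-\tau)^{5/2}d\tau\lesssim(1+t)^{3/2}$. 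This yields $(1+t)^{-3/2}(1+|x|^2/(1+t))^{-3/2}$ once we trade $(1+|x|^2/(1+t))^{-3/2}$ against the remaining factor; that is exactly why the exponent $-3/2$ rather than $-3$ appears. When $|y|<|x|/2$ and $|x|\gg\sqrt{1+t}$, we use spherical coordinates $z=|x-y|$, $\sin\theta\,d\theta=z\,dz/(r|x|)$ as in Case 4 of $K_4$. This converts the Huygens shell into a factor $|x|^{-1}(1+t-\tau)^{1/2}$ and gives the $|x|^{-2}$-type decay needed in $D_4,D_5$.

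\textbf{$K_7$.} This is the main obstacle, since both factors are shells and the Green's factor decays only like $(1+t-\tau)^{-2}$ instead of $(1+t-\tau)^{-5/2}$ as in $K_3$. Away from the cone ($D_3$ and the parts of $D_1,D_4,D_5$ far from both shells), the plan is to use the geometric fact that $|x-y|\approx{\rm c}(t-\tau)$ and $|y|\approx{\rm c}\tau$ force $|x|$ to be close to ${\rm c}t$ or the angle to be large. Either one of the two factors is then small, by at least $(1+(|x|-{\rm c}t)^2/(1+t))^{-N/2}$ from the Green's factor or $(1+(|x|-{\rm c}t)^2/(1+t))^{-1}$ from the source, or we are in the interior region. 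In the interior regions $D_1,D_4,D_5$ we apply the spherical-coordinate reduction twice, integrating the Green's shell in $z$ and the source shell in $r$. This gives $|x|^{-1}\int(1+t-\tau)^{-2+1/2}(1+\tau)^{-4+1/2}\,r\,d\tau$ with $r\approx{\rm c}\tau$. The resulting integrand is $\lesssim|x|^{-1}(1+t-\tau)^{-3/2}(1+\tau)^{-5/2}$. Integrated over the middle time window, where the two shells can intersect at the point $x$, namely $\tau\in[\frac t2-\frac{|x|}{2{\rm c}},\,t-\frac{|x|}{2{\rm c}}]$, it gives $|x|^{-1}(1+|x|)^{-1}(1+t)^{-1/2}$-type bounds. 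These we convert into $(1+t)^{-3/2}(1+|x|^2/(1+t))^{-3/2}$ using $\sqrt{1+t}\le|x|\le{\rm c}t$.

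The delicate step, where I expect the real difficulty, is the cone region $D_2\cup D_5$ near $\tau\approx t$ or $\tau\approx0$. There the lost $(1+t-\tau)^{-1/2}$ cannot be recovered in time. I would first try sacrificing spatial decay: keep only the power $-1$ of the source profile $(1+(|y|-{\rm c}\tau)^2/(1+\tau))^{-2}$ to produce $(1+(|x|-{\rm c}t)^2/(1+t))^{-1}$, and spend the other power on integrability in $y$. This should preserve the temporal rate $(1+t)^{-2}$ and explains why only the exponent $-1$ is claimed in the wave part.
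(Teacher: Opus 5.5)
Your architecture is the paper's: the regions $D_1,\dots,D_5$, the time and space splits, spherical coordinates, and spending only one power of the source profile near the cone. However, the quantitative core of your interior argument for $K_7$ is wrong.

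First, the spherical reduction loses a Jacobian factor. Since $\sin\theta\,d\theta=z\,dz/(r|x|)$ and $z\approx{\rm c}(t-\tau)$ on the Green shell, one has $\int z(1+\frac{(z-{\rm c}(t-\tau))^2}{1+t-\tau})^{-N}dz\sim(1+t-\tau)^{3/2}$. The integrand is therefore $|x|^{-1}(1+t-\tau)^{-1/2}(1+\tau)^{-5/2}$, not $|x|^{-1}(1+t-\tau)^{-3/2}(1+\tau)^{-5/2}$; compare the factor $(1+t-\tau)^{-2+\frac32}$ in $J_2$ of Case 4 of the paper.

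Second, your conversion step is false. The bound $|x|^{-1}(1+|x|)^{-1}(1+t)^{-1/2}\lesssim(1+t)^{-3/2}(1+\frac{|x|^2}{1+t})^{-3/2}\sim|x|^{-3}$ requires $|x|\lesssim\sqrt{1+t}$, which fails throughout $D_4\cup D_5$.

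More fundamentally, in $D_5$ no diffusion-profile bound is possible for this window. For $\tau$ just after the tangency time $\tau_0=\frac{{\rm c}t-|x|}{2{\rm c}}$ the two shells intersect, and the window genuinely contributes about $(1+t)^{-3/2}(1+{\rm c}t-|x|)^{-3/2}$. When ${\rm c}t-|x|\ll t$ this is much larger than $(1+t)^{-3}$, the size of the diffusion profile in $D_5$. The paper computes exactly this on $[\frac t4-\frac{|x|}{4{\rm c}},\frac t2]$ and absorbs it into the \emph{wave} profile via $(1+{\rm c}t-|x|)^{1/2}\lesssim(1+t)^{1/2}$. So the weakened exponent $-1$ is already needed in this middle window, not only at the endpoints $\tau\approx0,t$ as you suggest. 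In $D_4$ the corrected integrand gives $|x|^{-1}(1+t)^{-2}\lesssim|x|^{-3}$, which is fine.

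The time windows you import from $K_4$ also do not fit $K_7$. At $\tau_0=\frac t2-\frac{|x|}{2{\rm c}}$ the Green shell is internally tangent to the source shell. So for $\tau$ near $\tau_0$ neither profile is small where they overlap, and no spatial split on $[0,\tau_0]$ can produce decay in ${\rm c}t-|x|$. The paper instead cuts at $\tau_0/2=\frac t4-\frac{|x|}{4{\rm c}}$. There $|y|\le\frac{{\rm c}t-|x|}{2}$ forces ${\rm c}(t-\tau)-|x-y|\ge\frac{{\rm c}t-|x|}{4}$, and the complement forces $|y|-{\rm c}\tau\ge\frac{{\rm c}t-|x|}{4}$. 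In $D_5$ it then treats $[\tau_0/2,\frac t2]$ by spherical coordinates.

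The cone estimate itself is only announced (``I would first try''). What closes it in the paper ($J_{12}$ in $D_3$ and $D_5$) is, besides peeling off one power, the exchange $(1+\frac{({\rm c}t-|x|)^2}{1+\tau})^{-1}\lesssim\frac{1+\tau}{1+t}(1+\frac{({\rm c}t-|x|)^2}{1+t})^{-1}$, valid since $|{\rm c}t-|x||\ge\sqrt{1+t}$. This is followed by H\"older in $L^{5/2}\times L^{5/3}$, which bounds the remaining $y$-integral by $(1+t-\tau)(1+\tau)^{3/2}$ and makes the $\tau$-integral converge. Near $\tau=t$ the paper restricts to $\tau\ge\frac{3t}{4}+\frac{|x|}{4{\rm c}}$. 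There, on either side of $|y|=\frac{|x|+{\rm c}t}{2}$, one of the two profiles is evaluated at distance at least $\frac{{\rm c}t-|x|}{4}$ from its shell.

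For $K_6$, which the paper does not prove, the target in $D_4$ is $\sim|x|^{-3}$, not the $|x|^{-2}$ you aim for. Copying the $K_4$ spherical step is therefore not enough there either.
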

We only focus on the proof of $K_7$ since the proof of $K_6$ is relatively easier.

\noindent\textit{\textbf{Proof of $K_7$}}. (Huygens wave convolved with Huygens wave for {\bf{non-conservative system}})

\textbf{Case 1:} $\left( x,t\right) \in D_{1}\cup D_{2}$. Direct computation
gives
\begin{eqnarray*}
K_7 &\lesssim &\int_{0}^{\frac{t}{2}}\int_{\mathbb{R}^{3}}( 1+t)
^{-2}\left( 1+\tau \right) ^{-4}\bigg( 1+\frac{\left( \left \vert y\right
\vert -{\rm c}\tau \right) ^{2}}{1+\tau }\bigg) ^{-2}dyd\tau \\
&&+\int_{\frac{t}{2}}^{t}\int_{\mathbb{R}^{3}}\left( 1+t-\tau \right)
^{-2}\bigg(1+\frac{(|x-y|-{\rm c}(t-\tau))^2}{1+t-\tau}\bigg)^{-N}\left( 1+t\right)
^{-4}dyd\tau \\
&\lesssim &\left( 1+t\right)^{-2}\int_{0}^{\frac{t}{2}}\left( 1+\tau
\right)^{-4}\left( 1+\tau \right) ^{\frac{5}{2}}d\tau +\left( 1+t\right)^{-4}\int_{\frac{t}{2}}^{t}\left( 1+t-\tau \right) ^{-2}\left( 1+t-\tau
\right) ^{\frac{5}{2}}d\tau \\
&\lesssim &(1+t)^{-2}\hbox{,}
\end{eqnarray*}
which implies that
\begin{eqnarray*}
K_7 \lesssim ( 1+t)^{-2}\bigg(1+\frac{|x|^2}{1+t}\bigg)^{-N}\hbox{,}\ \ \ \ \ \ {\rm in}\ D_1,\\[2mm]
K_7 \lesssim ( 1+t)^{-2}\bigg(1+\frac{(|x|-{\rm c}t)^2}{1+t}\bigg)^{-N}\hbox{,}\ \ \ \ \ {\rm in}\ D_2.
\end{eqnarray*}

\noindent\textbf{Case 2:} $\left( x,t\right) \in D_{3}$. We split the integral $K_7$
into four parts%
\begin{eqnarray*}
J \!&\!=\!&\!\int_{0}^{\frac{t}{2}}\bigg( \int_{\left \vert y\right \vert -{\rm c}%
\tau \leq \frac{\left \vert x\right \vert -{\rm c}t}{2}}+\int_{\left
\vert y\right \vert -{\rm c}\tau >\frac{\left \vert x\right \vert -%
{\rm c}t}{2}}\bigg) \left( \cdots \right) dyd\tau\\
&& +\int_{\frac{t}{2}%
}^{t}\bigg( \int_{\left \vert y\right \vert -{\rm c}\tau \leq \frac{\left
\vert x\right \vert -{\rm c}t}{2}}+\int_{\left \vert y\right \vert -%
{\rm c}\tau >\frac{\left \vert x\right \vert -{\rm c}t}{2}}\bigg)
\left( \cdots \right) dyd\tau\\[2mm]
\!&\!=\!&\!J_{11}+J_{12}+J_{21}+J_{22}\hbox{.}
\end{eqnarray*}

Note that $\left \vert y\right \vert -{\rm c}\tau \leq \frac{%
\left
\vert x\right \vert -{\rm c}t}{2}$ implies $\left \vert x-y\right \vert -{\rm c}\left( t-\tau \right) \geq \left
\vert x\right \vert -\left \vert y\right \vert -{\rm c}\left( t-\tau
\right) \geq \frac{\left \vert x\right \vert -{\rm c}t}{2}$. 
Hence,
\begin{eqnarray*}
J_{11} &\lesssim &\left(
1+t\right) ^{-2}\Big(1+\frac{(|x|-{\rm c}t)^2}{1+t}\Big)^{-N}\int_{0}^{\frac{t}{2}}\int_{\left \vert y\right \vert -%
{\rm c}\tau \leq \frac{\left \vert x\right \vert -{\rm c}t}{2}}\left( 1+\tau \right) ^{-4}\bigg( 1+%
\frac{\left( \left \vert y\right \vert -{\rm c}\tau \right) ^{2}}{1+\tau }%
\bigg) ^{-2}dyd\tau \\
&\lesssim &\left(
1+t\right) ^{-2}\Big(1+\frac{(|x|-{\rm c}t)^2}{1+t}\Big)^{-N}\int_{0}^{\frac{t}{2}}\left( 1+\tau \right) ^{-4}\left( 1+\tau \right) ^{\frac{5}{2}}d\tau\\
&\lesssim& \left( 1+t\right) ^{-2}\Big(1+\frac{(|x|-{\rm c}t)^2}{1+t}\Big)^{-N}\hbox{.}
\end{eqnarray*}

For $J_{12}$, by using Young inequality, we have
\begin{eqnarray*}
J_{12} &\lesssim &\int_{0}^{\frac{t}{2}}\int_{\left \vert y\right \vert -%
{\rm c}\tau >\frac{\left \vert x\right \vert -{\rm c}t}{2}}\left(
1+t-\tau \right)^{-2}\Big(1+\frac{(|x-y|-{\rm c}(t-\tau))^2}{1+t-\tau}\Big)^{-N}\\
&&\ \ \ \ \ \ \ \cdot \left( 1+\tau \right) ^{-4}\bigg( 1+\frac{\left( \left \vert x\right \vert -%
{\rm c}t \right) ^{2}}{1+\tau }\bigg) ^{-1}\bigg( 1+\frac{\left( \left \vert y\right \vert -%
{\rm c}\tau \right) ^{2}}{1+\tau }\bigg) ^{-1}dyd\tau \\
&\lesssim &(1+t)^{-2}\Big( 1+\frac{\left( \left \vert x\right \vert -%
{\rm c}t \right) ^{2}}{1+t}\Big) ^{-1}\int_{0}^{\frac{t}{2}}\int_{\left \vert y\right \vert -%
{\rm c}\tau >\frac{\left \vert x\right \vert -{\rm c}t}{2}}\left(
1+t-\tau \right) ^{-1}\Big(1+\frac{(|x-y|-{\rm c}(t-\tau))^2}{1+t-\tau}\Big)^{-N}\\
&&\ \ \ \ \ \ \ \ \ \ \ \ \ \ \ \ \ \ \ \ \ \ \ \ \ \ \ \cdot \left( 1+\tau \right) ^{-3}\bigg( 1+\frac{\left( \left \vert y\right \vert -%
{\rm c}\tau \right)^{2}}{1+\tau }\bigg) ^{-1}dyd\tau \\
&\lesssim &(1+t)^{-2}\Big( 1+\frac{\left( \left \vert x\right \vert -%
{\rm c}t \right) ^{2}}{1+t}\Big) ^{-1}\int_{0}^{\frac{t}{2}}(1+t-\tau)^{-1}(1+t-\tau)(1+\tau)^{-3}(1+\tau)^{\frac{3}{2}}d\tau \\
&\lesssim &(1+t)^{-2}\Big( 1+\frac{\left( \left \vert x\right \vert -%
{\rm c}t \right) ^{2}}{1+t}\Big) ^{-1}.
\end{eqnarray*}%

For $J_{21}$ and $J_{22}$, it follows that
\begin{eqnarray*}
J_{21} &\lesssim &\int_{\frac{t}{2}}^{t}\int_{\left \vert y\right \vert -%
{\rm c}\tau \leq \frac{\left \vert x\right \vert -{\rm c}t}{2}}\left(
1+t-\tau \right) ^{-2}\Big(1+\frac{(|x|-{\rm c}t)^2}{1+t-\tau}\Big)^{-N/2}\Big(1+\frac{(|x-y|-{\rm c}(t-\tau))^2}{1+t-\tau}\Big)^{-N/2}\\
&& \ \ \ \ \ \ \ \ \cdot\left( 1+t\right) ^{-4}\bigg( 1+\frac{\left( \left \vert
y\right \vert -{\rm c}\tau \right) ^{2}}{1+\tau }\bigg) ^{-2}dyd\tau \\
&\lesssim &\left( 1+t\right) ^{-4}\Big(1+\frac{(|x|-{\rm c}t)^2}{1+t}\Big)^{-N/2}\int_{\frac{t}{2}%
}^{t}\left( 1+t-\tau \right) ^{-2}\left( 1+t-\tau \right) ^{\frac{5}{2}%
}d\tau \\
&\lesssim &\left( 1+t\right) ^{-\frac{5}{2}}\Big(1+\frac{(|x|-{\rm c}t)^2}{1+t}\Big)^{-N/2}\hbox{,}
\end{eqnarray*}%
\begin{eqnarray*}
J_{22} &\lesssim &\int_{\frac{t}{2}}^{t}\int_{\left \vert y\right \vert -%
{\rm c}\tau >\frac{\left \vert x\right \vert -{\rm c}t}{2}}\left(
1+t-\tau \right) ^{-2}\Big(1+\frac{(|x-y|-{\rm c}(t-\tau))^2}{1+t-\tau}\Big)^{-N}\\
&&\cdot\left( 1+t\right) ^{-4}\bigg( 1+\frac{\left( \left \vert y\right \vert -%
{\rm c}\tau \right) ^{2}}{1+\tau }\bigg) ^{-2}dyd\tau \\
&\lesssim &\int_{\frac{t}{2}}^{t}\left( 1+t-\tau \right) ^{-2}\left(
1+t-\tau \right) ^{\frac{5}{2}}\left( 1+t\right) ^{-4}\bigg( 1+\frac{\left(
\left \vert x\right \vert -{\rm c}t\right) ^{2}}{1+t}\bigg) ^{-2}d\tau \\
&\lesssim &\left( 1+t\right) ^{-\frac{5}{2}}\bigg( 1+\frac{\left( \left \vert x\right
\vert -{\rm c}t\right) ^{2}}{1+t}\bigg) ^{-2}\hbox{.}
\end{eqnarray*}

Gathering all the estimates, we can conclude that
\begin{equation*}
K_7\lesssim \left( 1+t\right) ^{-2}\bigg( 1+\frac{\left( \left \vert x\right
\vert -{\rm c}t\right) ^{2}}{1+t}\bigg) ^{-1}\hbox{.}
\end{equation*}
\newline
\textbf{Case 3:} $\left( x,t\right) \in D_{4}$. We split the integral into
four parts%
\begin{eqnarray*}
K_7&=&\bigg( \int_{0}^{\frac{t}{4}-\frac{\left \vert x\right \vert }{4\mathbf{%
c}}}+\int_{\frac{t}{4}-\frac{\left \vert x\right \vert }{4{\rm c}}}^{%
\frac{t}{2}}+\int_{\frac{t}{2}}^{\frac{t}{2}+\frac{1}{4}\left( t+\frac{\left
\vert x\right \vert }{{\rm c}}\right) }+\int_{\frac{t}{2}+\frac{1}{4}%
\left( t+\frac{\left \vert x\right \vert }{{\rm c}}\right) }^{t}\bigg)
\int_{\mathbb{R}^{3}}\left( \cdots \right) dyd\tau \\
&=:&J_{1}+J_{2}+J_{3}+J_{4}\hbox{.}
\end{eqnarray*}

For $J_{1}$, we decompose $\mathbb{R}^{3}$ into two parts%
\begin{equation*}
J_{1}=\int_{0}^{\frac{t}{4}-\frac{\left \vert x\right \vert }{4{\rm c}}%
}\left( \int_{\left \vert y\right \vert \leq \frac{{\rm c}t-\left \vert
x\right \vert }{2}}+\int_{\left \vert y\right \vert >\frac{{\rm c}t-\left
\vert x\right \vert }{2}}\right) \int_{\mathbb{R}^{3}}\left( \cdots \right)
dyd\tau =:J_{11}+J_{12}\hbox{.}
\end{equation*}%
If $0\leq \tau \leq \frac{t}{4}-\frac{\left \vert x\right \vert }{4{\rm c}%
}$ and $\left \vert y\right \vert \leq \frac{{\rm c}t-\left \vert
x\right
\vert }{2}$, then
\begin{equation*}
{\rm c}\left( t-\tau \right) -\left \vert x-y\right \vert \geq {\rm c}%
\left( t-\tau \right) -\left \vert x\right \vert -\left \vert y\right \vert
\geq \frac{{\rm c}t-\left \vert x\right \vert }{4}\geq \frac{{\rm c}t}{%
8}\hbox{.}
\end{equation*}%
Hence, it holds that
\begin{eqnarray*}
J_{11} &\lesssim &\int_{0}^{\frac{t}{4}-\frac{\left \vert x\right \vert }{4%
{\rm c}}}\!\!\int_{\left \vert y\right \vert \leq \frac{{\rm c}t-\left
\vert x\right \vert }{2}}\left( 1\!+\!t\!-\!\tau\right) ^{-2}\Big(1+\frac{(|x\!-\!y|\!-\!{\rm c}(t-\tau))^2}{1+t-\tau}\Big)^{\!-N}\left( 1\!+\!\tau \right) ^{-4}\bigg( 1\!+\!\frac{\left( \left \vert y\right \vert -%
{\rm c}\tau \right) ^{2}}{1+\tau }\bigg) ^{\!-2}\!dyd\tau \\
&\lesssim &\left(
1+t\right) ^{-2}\Big(1\!+\!\frac{(|x|-{\rm c}t)^2}{1+t}\Big)^{-N}\!\int_{0}^{\frac{t}{%
4}-\frac{\left \vert x\right \vert }{4{\rm c}}}\!\left( 1+\tau \right)
^{-4}\left( 1+\tau \right) ^{\frac{5}{2}}d\tau \lesssim \left(
1+t\right) ^{-2}\Big(1\!+\!\frac{(|x|-{\rm c}t)^2}{1+t}\Big)^{-N}\hbox{,}
\end{eqnarray*}%
and%
\begin{eqnarray*}
J_{12} &\lesssim &\int_{0}^{\frac{t}{4}-\frac{\left \vert x\right \vert }{4%
{\rm c}}}\!\!\!\int_{\left \vert y\right \vert >\frac{{\rm c}t-\left \vert
x\right \vert }{2}}\left( 1\!+\!t\!-\!\tau \right) ^{-2}\Big(1\!+\!\frac{(|x-y|-{\rm c}(t-\tau))^2}{1+t-\tau}\Big)^{\!-N}\!\left( 1\!+\!\tau \right) ^{-2}\left( 1\!+\!\left(
{\rm c}t\!-\!\left \vert x\right \vert \right)^{2}\right) ^{\!-2}\!dyd\tau \\
&\lesssim &\left( 1+t\right) ^{-4}\int_{0}^{\frac{t}{4}-\frac{\left \vert
x\right \vert }{4{\rm c}}}\left( 1+t-\tau \right) ^{-2}\left( 1+t-\tau
\right) ^{\frac{5}{2}}\left( 1+\tau \right) ^{-2}d\tau \\
&\lesssim &\left( 1+t\right) ^{-\frac{7}{2}}\lesssim \left( 1+t\right) ^{-\frac{1}{2}}\left(
1+\left \vert x\right \vert \right) ^{-3}\lesssim \left( 1+t\right)
^{-2}\bigg( 1+\frac{\left \vert x\right \vert ^{2}}{1+t}\bigg) ^{-\frac{3}{2}}\hbox{,}
\end{eqnarray*}%
since $\sqrt{1+t}\leq \left \vert x\right \vert \leq \frac{{\rm c}t}{2}$. Thus,
\begin{eqnarray*}
J_{1}\lesssim \left(
1+t\right) ^{-2}\Big(1+\frac{(|x|-{\rm c}t)^2}{1+t}\Big)^{-N}+\left( 1+t\right)
^{-2}\bigg( 1+\frac{\left \vert x\right \vert ^{2}}{1+t}\bigg) ^{-\frac{3%
}{2}}\hbox{.}
\end{eqnarray*}

For $J_{2}$, $\tau\geq\frac{t}{4}-\frac{\left \vert x\right \vert }{4%
{\rm c}}$ implies that
\begin{eqnarray*}
J_{2} &\lesssim &\int_{\frac{t}{4}-\frac{\left \vert x\right \vert }{4%
{\rm c}}}^{\frac{t}{2}}\int_{\mathbb{R}^{3}}\left( 1+t\right) ^{-2}\Big(1+\frac{(|x-y|-{\rm c}(t-\tau))^2}{1+t-\tau}\Big)^{-N}\left( 1+\tau \right)
^{-4}\bigg( 1+\frac{\left( \left \vert y\right \vert -{\rm c}\tau \right)
^{2}}{1+\tau }\bigg) ^{-2}dyd\tau \\
&\lesssim &\left( 1+t\right) ^{-2}\left( 1+\frac{t}{4}-\frac{\left \vert
x\right \vert }{4{\rm c}}\right) ^{-4}\int_{0}^{t}\int_{\mathbb{R}%
^{3}}\Big(1+\frac{(|x-y|-{\rm c}(t-\tau))^2}{1+t-\tau}\Big)^{-N}\bigg( 1+\frac{\left(
\left \vert y\right \vert -{\rm c}\tau \right) ^{2}}{1+\tau }\bigg)
^{-2}dyd\tau \\
&\lesssim &\left( 1+t\right) ^{-2}\left( 1+t\right) ^{-4}\left( 1+t\right)
^{3}\lesssim \left( 1+t\right) ^{-3}\lesssim \left( 1+t\right) ^{-\frac{3}{2}}\bigg(
1+\frac{\left \vert x\right \vert ^{2}}{1+t}\bigg) ^{-\frac{3}{2}}\hbox{,}
\end{eqnarray*}%
and for $J_3$, the relation $t-\tau\geq t-\frac{t}{2}-\frac{1}{4}(t+\frac{|x|}{{\rm c}})=\frac{{\rm c}t-|x|}{4c}>\frac{|x|}{2{\rm c}}$ gives that
\begin{eqnarray*}
J_{3} &\lesssim &\int_{\frac{t}{2}}^{\frac{t}{2}+\frac{1}{4}\left( t+\frac{%
\left \vert x\right \vert }{{\rm c}}\right) }\!\!\!\int_{\mathbb{R}^{3}}\left(
1\!+\!t\!-\!\tau \right) ^{-2}\Big(1\!+\!\frac{(|x-y|-{\rm c}(t-\tau))^2}{1+t-\tau}\Big)^{\!-N}\!\left( 1\!+\!t\right) ^{-4}\bigg( 1\!+\!\frac{\left( \left \vert y\right \vert -%
{\rm c}\tau \right) ^{2}}{1+\tau }\bigg) ^{-2}\!dyd\tau \\
&\lesssim &\Big( 1+\frac{|x|}{2{\rm c}}\Big)^{-2}\left( 1+t\right) ^{-4}\int_{0}^{t}\int_{%
\mathbb{R}^{3}}\Big(1+\frac{(|x-y|-{\rm c}(t-\tau))^2}{1+t-\tau}\Big)^{-N}\bigg( 1+%
\frac{\left( \left \vert y\right \vert -{\rm c}\tau \right) ^{2}}{1+\tau }%
\bigg) ^{-2}dyd\tau \\
&\lesssim &\Big( 1+\frac{|x|}{2c}\Big)^{-2}\left( 1+t\right) ^{-1}\lesssim (1+t)^{-\frac{3}{2}}\Big(1+\frac{|x|^2}{1+t}\Big)^{-\frac{3}{2}}.
\end{eqnarray*}

As for $J_{4}$, we decompose $\mathbb{R}^{3}$ into two parts%
\begin{equation*}
J_{4}=\int_{\frac{t}{2}+\frac{1}{4}\left( t+\frac{\left \vert x\right \vert
}{{\rm c}}\right) }^{t}\left( \int_{\left \vert y\right \vert \leq \frac{%
\left \vert x\right \vert +{\rm c}t}{2}}+\int_{\left \vert y\right \vert >%
\frac{\left \vert x\right \vert +{\rm c}t}{2}}\right) \left( \cdots
\right) dyd\tau =:J_{41}+J_{42}\hbox{.}
\end{equation*}%
If $\frac{t}{2}+\frac{1}{4}\big( t+\frac{\left \vert x\right \vert }{%
{\rm c}}\big) \leq \tau \leq t$ and $\left \vert y\right \vert \leq \frac{%
\left \vert x\right \vert +{\rm c}t}{2}$, then%
\begin{equation*}
{\rm c}\tau -\left \vert y\right \vert \geq \frac{{\rm c}t}{2}+\frac{1%
}{4}\left( {\rm c}t+\left \vert x\right \vert \right) -\frac{\left \vert
x\right \vert +{\rm c}t}{2}\geq \frac{{\rm c}t-\left \vert x\right
\vert }{4}\geq \frac{{\rm c}t}{8}\hbox{.}
\end{equation*}%
If $\frac{t}{2}+\frac{1}{4}\big( t+\frac{\left \vert x\right \vert }{%
{\rm c}}\big) \leq \tau \leq t$ and $\left \vert y\right \vert >\frac{%
\left \vert x\right \vert +{\rm c}t}{2}$, then%
\begin{eqnarray*}
\left \vert x-y\right \vert -{\rm c}\left( t-\tau \right) &\geq &\left
\vert y\right \vert -\left \vert x\right \vert -{\rm c}\left( t-\tau
\right) \geq \frac{\left \vert x\right \vert +{\rm c}t}{2}-\left \vert
x\right \vert -{\rm c}t+\frac{{\rm c}t}{2}+\frac{1}{4}\left( {\rm c}%
t+\left \vert x\right \vert \right) \\
&\geq &\frac{{\rm c}t-\left \vert x\right \vert }{4}\geq \frac{{\rm c}t%
}{8}\hbox{.}
\end{eqnarray*}%
Hence,
\begin{eqnarray*}
J_{41} &\lesssim &\int_{\frac{t}{2}+\frac{1}{4}\left( t+\frac{\left \vert
x\right \vert }{{\rm c}}\right) }^{t}\int_{\left \vert y\right \vert \leq
\frac{\left \vert x\right \vert +{\rm c}t}{2}}\left( 1+t-\tau \right)
^{-2}\Big(1+\frac{(|x-y|-{\rm c}(t-\tau))^2}{1+t-\tau}\Big)^{-N}\\
&&\cdot\left( 1+t\right)
^{-4}\bigg( 1+\frac{( {\rm c}t-\left \vert x\right \vert )^{2}%
}{1+t}\bigg) ^{-2}dyd\tau \\
&\lesssim &\left( 1+t\right) ^{-4}\bigg( 1+\frac{\left( {\rm c}t-\left
\vert x\right \vert \right) ^{2}}{1+t}\bigg) ^{-2}(1+t)^{\frac{3}{2}} \\
&\lesssim &\left( 1+t\right) ^{-\frac{9}{2}}\lesssim \left( 1+t\right) ^{-\frac{3}{2}}\left(
1+\left \vert x\right \vert \right) ^{-3}\lesssim \left( 1+t\right)
^{-3}\bigg( 1+\frac{\left \vert x\right \vert ^{2}}{1+t}\bigg) ^{-\frac{3%
}{2}}\hbox{,}
\end{eqnarray*}%
\begin{eqnarray*}
J_{42} &\lesssim &\int_{\frac{t}{2}+\frac{1}{4}\left( t+\frac{\left \vert
x\right \vert }{{\rm c}}\right) }^{t}\int_{\left \vert y\right \vert >%
\frac{\left \vert x\right \vert +{\rm c}t}{2}}\left( 1+t-\tau \right)
^{-2}\Big(1+\frac{(|x|-{\rm c}t)^2}{1+t}\Big)^{-N/2}\\
&& \cdot \Big(1+\frac{(|x-y|-{\rm c}(t-\tau))^2}{1+t-\tau}\Big)^{-N/2}\left( 1+t\right) ^{-4}dyd\tau \\
&\lesssim &\left( 1+t\right) ^{-4}\Big(1+\frac{(|x|-{\rm c}t)^2}{1+t}\Big)^{-N/2}\int_{\frac{t}{2}+%
\frac{1}{4}\left( t+\frac{\left \vert x\right \vert }{{\rm c}}\right)
}^{t}\left( 1+t-\tau \right) ^{-\frac{5}{2}+\frac{5}{2}}d\tau \\
&\lesssim &\left( 1+t\right) ^{-3}\Big(1+\frac{(|x|-{\rm c}t)^2}{1+t}\Big)^{-N/2}\hbox{.}
\end{eqnarray*}%

As a result,
\begin{eqnarray*}
K_7\lesssim \left( 1+t\right) ^{-\frac{3}{2}}\bigg( 1+\frac{\left \vert x\right \vert
^{2}}{1+t}\bigg) ^{-\frac{3}{2}}+\left( 1+t\right) ^{-2}\Big(1+\frac{(|x|-{\rm c}t)^2}{1+t}\Big)^{-N/2}
\end{eqnarray*}
for $\left( x,t\right) \in D_{4}$. 

\bigskip
\noindent\textbf{Case 4:} $\left( x,t\right) \in D_{5}$. We split the integral $K_7$
into five parts%
\begin{equation*}
K_7=\left( \int_{0}^{\frac{t}{4}-\frac{\left \vert x\right \vert }{4{\rm c}}%
}+\int_{\frac{t}{4}-\frac{\left \vert x\right \vert }{4{\rm c}}}^{\frac{t%
}{2}}+\int_{\frac{t}{2}}^{\frac{|x|}{2{\rm c}}+\frac{1}{4}(t+\frac{|x|}{%
{\rm c}})} +\int_{\frac{|x|}{2{\rm c}}+\frac{1}{4}(t+\frac{|x|}{%
{\rm c}})}^{\frac{t}{2}+\frac{1}{4}(t+\frac{|x|}{{\rm c}})} +\int^{t}_{%
\frac{t}{2}+\frac{1}{4}(t+\frac{|x|}{{\rm c}})}\right) \int_{\mathbb{R}%
^{3}}\left( \cdots \right) dyd\tau =:\sum_{i=1}^{5}J_{i}\hbox{.}
\end{equation*}

For $J_{1}$, we decompose $\mathbb{R}^{3}$ into two parts%
\begin{equation*}
J_{1}=\int_{0}^{\frac{t}{4}-\frac{\left \vert x\right \vert }{4{\rm c}}%
}\left( \int_{\left \vert y\right \vert \leq \frac{{\rm c}t-\left \vert
x\right \vert }{2}}+\int_{\left \vert y\right \vert >\frac{{\rm c}t-\left
\vert x\right \vert }{2}}\right) \left( \cdots \right) dyd\tau
=:J_{11}+J_{12}\hbox{.}
\end{equation*}%
If $0\leq \tau \leq \frac{t}{4}-\frac{\left \vert x\right \vert }{4{\rm c}%
}$ and $\left \vert y\right \vert \leq \frac{{\rm c}t-\left \vert
x\right
\vert }{2}$, then
\begin{equation*}
{\rm c}\left( t-\tau \right) -\left \vert x-y\right \vert \geq {\rm c}%
\left( t-\tau \right) -\left( \left \vert x\right \vert +\left \vert y\right
\vert \right) \geq \frac{{\rm c}t-\left \vert x\right \vert }{4}\hbox{.}
\end{equation*}%
If $0\leq \tau \leq \frac{t}{4}-\frac{\left \vert x\right \vert }{4{\rm c}%
}$ and $\left \vert y\right \vert >\frac{{\rm c}t-\left \vert x\right \vert
}{2}$, then 
$$
\left \vert y\right \vert -{\rm c}\tau \geq \frac{{\rm c}t-\left \vert
x\right \vert }{4}.
$$ 
Hence,%
\begin{eqnarray*}
J_{11} &\lesssim &\int_{0}^{\frac{t}{4}-\frac{\left \vert x\right \vert }{4%
{\rm c}}}\int_{\left \vert y\right \vert \leq \frac{{\rm c}t-\left
\vert x\right \vert }{2}}\left( 1+t\right) ^{-2}\Big(1+\frac{({\rm c}t-|x|)^2}{1+t}\Big)^{-N}\left(
1+\tau \right) ^{-4}\bigg( 1+\frac{\left( \left \vert y\right \vert -{\rm
c}\tau \right) ^{2}}{1+\tau }\bigg) ^{-2}dyd\tau \\
&\lesssim &\left( 1+t\right) ^{-2}\Big(1+\frac{({\rm c}t-|x|)^2}{1+t}\Big)^{-N}\int_{0}^{\frac{{\rm
c}t-\left \vert x\right \vert }{4{\rm c}}}\left( 1+\tau \right) ^{-4+\frac{5}{2}%
}d\tau \lesssim \left( 1+t\right) ^{-2}\Big(1+\frac{({\rm c}t-|x|)^2}{1+t}\Big)^{-N}\hbox{.}
\end{eqnarray*}%
For $J_{12}$, by using Young inequality, we have
\begin{eqnarray*}
J_{12} &\lesssim &\int_{0}^{\frac{t}{4}-\frac{\left \vert x\right \vert }{4%
{\rm c}}}\int_{\left \vert y\right \vert -%
{\rm c}\tau >\frac{\left \vert x\right \vert -{\rm c}t}{2}}\left(
1+t-\tau \right) ^{-2}\Big(1+\frac{(|x-y|-{\rm c}(t-\tau))^2}{1+t-\tau}\Big)^{-N}\\
&&\cdot \left( 1+\tau \right) ^{-4}\bigg( 1+\frac{\left( \left \vert x\right \vert -%
{\rm c}t \right) ^{2}}{1+\tau }\bigg) ^{-1}\bigg( 1+\frac{\left( \left \vert y\right \vert -%
{\rm c}\tau \right) ^{2}}{1+\tau }\bigg) ^{-1}dyd\tau \\
&\lesssim &(1+t)^{-2}\bigg( 1+\frac{\left( \left \vert x\right \vert -%
{\rm c}t \right) ^{2}}{1+t}\bigg) ^{-1}\int_{0}^{\frac{t}{4}-\frac{\left \vert x\right \vert }{4%
{\rm c}}}\int_{\left \vert y\right \vert -%
{\rm c}\tau >\frac{\left \vert x\right \vert -{\rm c}t}{2}}\left(
1+t-\tau \right) ^{-1}\\
&&\cdot \Big(1+\frac{(|x-y|-{\rm c}(t-\tau))^2}{1+t-\tau}\Big)^{-N}\left( 1+\tau \right) ^{-3}\left( 1+\frac{\left( \left \vert y\right \vert -%
{\rm c}\tau \right) ^{2}}{1+\tau }\right) ^{-1}dyd\tau \\
&\lesssim &(1+t)^{-2}\bigg( 1+\frac{\left( \left \vert x\right \vert -%
{\rm c}t \right) ^{2}}{1+t}\bigg) ^{-1}\int_{0}^{\frac{t}{4}-\frac{\left \vert x\right \vert }{4%
{\rm c}}}(1+t-\tau)^{-1}(1+t-\tau)(1+\tau)^{-3}(1+\tau)^{\frac{3}{2}}d\tau \\
&\lesssim &(1+t)^{-2}\bigg( 1+\frac{\left( \left \vert x\right \vert -%
{\rm c}t \right) ^{2}}{1+t}\bigg) ^{-1}.
\end{eqnarray*}%
Therefore, we get%
\begin{equation*}
J_{1}\lesssim (1+t)^{-2}\bigg( 1+\frac{\left( \left \vert x\right \vert -%
{\rm c}t \right) ^{2}}{1+t}\bigg) ^{-1}.
\end{equation*}

For $J_{2}$, we use the spherical coordinates to obtain

\begin{eqnarray*}
J_{2} &\lesssim &\int_{\frac{t}{4}-\frac{\left \vert x\right \vert }{4%
{\rm c}}}^{\frac{t}{2}}\int_{0}^{\infty }\int_{0}^{\pi }\left( 1+t-\tau
\right) ^{-2}\bigg(1+\frac{( \sqrt{\left \vert x\right \vert
^{2}+r^{2}-2r\left \vert x\right \vert \cos \theta }-{\rm c}\left( t-\tau
\right) ) ^{2}}{ 1+t-\tau}\bigg)^{-N}\\
&&\cdot\left( 1+\tau \right)
^{-4}\bigg( 1+\frac{\left( r-{\rm c}\tau \right) ^{2}}{1+\tau }\bigg)
^{-2}r^{2}\sin \theta d\theta drd\tau \\
&\lesssim &\int_{\frac{t}{4}-\frac{\left \vert x\right \vert }{4{\rm c}}%
}^{\frac{t}{2}}\int_{0}^{\infty }\int_{\left \vert \left \vert x\right \vert
-r\right \vert }^{\left \vert x\right \vert +r}\left( 1+t-\tau \right)
^{-2}\bigg(1+\frac{( \sqrt{\left \vert x\right \vert
^{2}+r^{2}-2r\left \vert x\right \vert \cos \theta }-{\rm c}\left( t-\tau
\right) ) ^{2}}{ 1+t-\tau}\bigg)^{-N}\\
&&\cdot\left( 1+\tau \right) ^{-4}\bigg( 1+\frac{%
\left( r-{\rm c}\tau \right) ^{2}}{1+\tau }\bigg) ^{-2}rz\frac{1}{\left
\vert x\right \vert }dzdrd\tau \\
&\lesssim &\int_{\frac{t}{4}-\frac{\left \vert x\right \vert }{4{\rm c}}%
}^{\frac{t}{2}}\int_{0}^{\infty }\int_{0}^{\infty }\left( 1+t-\tau \right)
^{-2}\bigg(1+\frac{( \sqrt{\left \vert x\right \vert
^{2}+r^{2}-2r\left \vert x\right \vert \cos \theta }-{\rm c}\left( t-\tau
\right) ) ^{2}}{ 1+t-\tau}\bigg)^{-N}\\
&&\cdot\left( 1+\tau \right) ^{-4}\bigg( 1+\frac{%
\left( r-{\rm c}\tau \right) ^{2}}{1+\tau }\bigg) ^{-2}rz\frac{1}{\left
\vert x\right \vert }dzdrd\tau \\
&\lesssim &\int_{\frac{t}{4}-\frac{\left \vert x\right \vert }{4{\rm c}}%
}^{\frac{t}{2}}\int_{0}^{\infty }\left( 1+t-\tau \right) ^{-2+%
\frac{3}{2}}\left( 1+\tau \right) ^{-4}\bigg( 1+\frac{\left( r-{\rm c}%
\tau \right) ^{2}}{1+\tau }\bigg) ^{-2}\frac{r}{\left \vert x\right \vert }%
drd\tau \\
&\lesssim &\left( 1+t\right) ^{-2+\frac{3}{2}}\left \vert x\right
\vert ^{-1}\int_{\frac{t}{4}-\frac{\left \vert x\right \vert }{4{\rm c}}%
}^{\frac{t}{2}}\left( 1+\tau \right) ^{-4+\frac{3}{2}}d\tau \\
&\lesssim &\left( 1+t\right) ^{-\frac{3}{2}}\left( 1+{\rm c}t-\left \vert x\right
\vert \right) ^{-\frac{3}{2}}\lesssim \left( 1+t\right) ^{-\frac{3}{2}}\left( 1+%
{\rm c}t-\left \vert x\right \vert \right) ^{\frac{1}{2}}\left( 1+{\rm
c}t-\left \vert x\right \vert \right) ^{-2} \\
&\lesssim &\left( 1+t\right)^{-2}\bigg( 1+\frac{\left( {\rm c}t-\left
\vert x\right \vert \right) ^{2}}{1+t}\bigg) ^{-1}\hbox{,}
\end{eqnarray*}%
where we have used the fact ${\rm c}t-|x|\geq\sqrt{1+t}$ in $D_5$.

For $J_{3}$, we use the spherical coordinates again to obtain%
\begin{eqnarray*}
J_{3} &\lesssim &\int_{\frac{t}{2}}^{\frac{3|x|}{4{\rm c}}+\frac{t}{4}%
}\int_{0}^{\infty }\int_{0}^{\pi }\left( 1+t-\tau \right) ^{-2}\bigg(1+\frac{( \sqrt{\left \vert x\right \vert
^{2}+r^{2}-2r\left \vert x\right \vert \cos \theta }-{\rm c}\left( t-\tau
\right) ) ^{2}}{ 1+t-\tau}\bigg)^{-N}\\
&&\cdot\left( 1+t\right) ^{-4}\bigg( 1+\frac{\left(
r-{\rm c}\tau \right) ^{2}}{1+\tau }\bigg) ^{-2}r^{2}\sin \theta d\theta
drd\tau \\
&\lesssim &\int_{\frac{t}{2}}^{\frac{3|x|}{4{\rm c}}+\frac{t}{4}%
}\int_{0}^{\infty }\int_{\left \vert \left \vert x\right \vert -r\right
\vert }^{\left \vert x\right \vert +r}\left( 1+t-\tau \right) ^{-2}\bigg(1+\frac{( \sqrt{\left \vert x\right \vert
^{2}+r^{2}-2r\left \vert x\right \vert \cos \theta }-{\rm c}\left( t-\tau
\right) ) ^{2}}{ 1+t-\tau}\bigg)^{-N}\\
&&\cdot\left( 1+t\right) ^{-4}\bigg( 1+\frac{\left( r-{\rm c}%
\tau \right) ^{2}}{1+\tau }\bigg) ^{-2}rz\frac{1}{\left \vert x\right \vert
}dzdrd\tau \\
&\lesssim &\left( 1+t\right) ^{-4}\left \vert x\right \vert ^{-1}\int_{\frac{%
t}{2}}^{\frac{3|x|}{4{\rm c}}+\frac{t}{4}} \int_{0}^{\infty }\left(
1+t-\tau \right) ^{-2+\frac{3}{2}}\bigg( 1+\frac{\left( r-{\rm c}\tau \right) ^{2}}{1+\tau }\bigg) ^{-2}rdrd\tau \\
&\lesssim &\left( 1+t\right) ^{-\frac{7}{2}}\int_{\frac{t}{2}}^{\frac{3|x|}{4{\rm
c}}+\frac{t}{4}}\left( 1+t-\tau \right) ^{-\frac{1}{2}}d\tau \\
&\lesssim &\left( 1+t\right) ^{-\frac{7}{2}}\bigg[\bigg(1+\frac{t}{2}\bigg)^{\frac{1}{2}}-
\bigg(1+\frac{{\rm c}t-|x|}{4{\rm c}}\bigg)^{\frac{1}{2}}\bigg] \\
&\lesssim& \left( 1+t\right) ^{-3}\lesssim\left( 1+t\right) ^{-2}\bigg( 1+\frac{\left( {\rm c}t-\left
\vert x\right \vert \right) ^{2}}{1+t}\bigg) ^{-1}\hbox{.}
\end{eqnarray*}
For $J_{4}$, similar to $J_{3}$, we have
\begin{eqnarray*}
J_{4} &\lesssim &\left( 1+t\right) ^{-\frac{7}{2}}\int_{\frac{3|x|}{4{\rm c}}+%
\frac{t}{4}}^{\frac{|x|}{4{\rm c}}+\frac{3t}{4}}\left( 1+t-\tau \right)
^{-\frac{1}{2}}d\tau \\
&\lesssim &\left( 1+t\right) ^{-\frac{7}{2}}\bigg[\left(1+\frac{3({\rm c}t-|x|)}{4%
{\rm c}}\right)^{\frac{1}{2}} -\left(1+\frac{({\rm c}t-|x|)}{2{\rm c}}\right)^{\frac{1}{2}}%
\bigg]\lesssim (1+t)^{-3}\\
&\lesssim& \left( 1+t\right) ^{-2}\bigg( 1+\frac{\left( {\rm c}t-\left
\vert x\right \vert \right) ^{2}}{1+t}\bigg) ^{-1}\hbox{.}
\end{eqnarray*}
For $J_{5}$, we decompose $\mathbb{R}^{3}$ into two parts%
\begin{equation*}
J_{5}=\int_{\frac{t}{2}+\frac{1}{4}\left( t+\frac{\left \vert x\right \vert
}{{\rm c}}\right) }^{t}\left( \int_{\left \vert y\right \vert \leq \frac{%
\left \vert x\right \vert +{\rm c}t}{2}}+\int_{\left \vert y\right \vert >%
\frac{\left \vert x\right \vert +{\rm c}t}{2}}\right) \left( \cdots
\right) dyd\tau =:J_{51}+J_{52}\hbox{.}
\end{equation*}
If $\frac{t}{2}+\frac{1}{4}\left( t+\frac{\left \vert x\right \vert }{%
{\rm c}}\right) \leq \tau \leq t$ and $\left \vert y\right \vert \leq \frac{%
\left \vert x\right \vert +{\rm c}t}{2}$, then%
\begin{equation*}
{\rm c}\tau -\left \vert y\right \vert \geq \frac{{\rm c}t}{2}+\frac{1%
}{4}\left( {\rm c}t+\left \vert x\right \vert \right) -\frac{\left \vert
x\right \vert +{\rm c}t}{2}\geq \frac{{\rm c}t-\left \vert x\right
\vert }{4}\hbox{.}
\end{equation*}%
If $\frac{t}{2}+\frac{1}{4}\left( t+\frac{\left \vert x\right \vert }{%
{\rm c}}\right) \leq \tau \leq t$ and $\left \vert y\right \vert >\frac{%
\left \vert x\right \vert +{\rm c}t}{2}$, then%
\begin{eqnarray*}
\left \vert x-y\right \vert -{\rm c}\left( t-\tau \right) &\geq &\left
\vert y\right \vert -\left \vert x\right \vert -{\rm c}\left( t-\tau
\right) \geq \frac{\left \vert x\right \vert +{\rm c}t}{2}-\left \vert
x\right \vert -{\rm c}t+\frac{{\rm c}t}{2}+\frac{1}{4}\left( {\rm c}%
t+\left \vert x\right \vert \right) \geq \frac{{\rm c}t-\left \vert
x\right \vert }{4}\hbox{.}
\end{eqnarray*}%
Hence,
\begin{eqnarray*}
J_{51} &\lesssim &\int_{\frac{t}{2}+\frac{1}{4}\left( t+\frac{\left \vert
x\right \vert }{{\rm c}}\right) }^{t}\int_{\left \vert y\right \vert \leq
\frac{\left \vert x\right \vert +{\rm c}t}{2}}\left( 1+t-\tau \right)
^{-2}\Big(1+\frac{(|x-y|-{\rm c}(t-\tau))^2}{1+t-\tau}\Big)^{-N}\\
&&\cdot\left( 1+t\right)
^{-4}\bigg( 1+\frac{\left( {\rm c}t-\left \vert x\right \vert \right) ^{2}%
}{1+t}\bigg) ^{-1}\left( 1+\frac{\left( {\rm c}\tau-\left \vert y\right \vert \right) ^{2}%
}{1+t}\right) ^{-1}dyd\tau \\
&\lesssim &\left( 1+t\right) ^{-4}\bigg( 1+\frac{\left( {\rm c}t-\left
\vert x\right \vert \right) ^{2}}{1+t}\bigg) ^{-1}\int_{\frac{t}{2}+\frac{1}{4}\left( t+\frac{\left \vert
x\right \vert }{{\rm c}}\right) }^{t}(1+t-\tau)^{-2}(1+t-\tau)^{\frac{5}{4}}(1+\tau)^{\frac{3}{4}}d\tau\\
&\lesssim &\left( 1+t\right) ^{-3}\bigg( 1+\frac{\left( {\rm c}t-\left
\vert x\right \vert \right) ^{2}}{1+t}\bigg) ^{-1}\hbox{,}
\end{eqnarray*}%
\begin{eqnarray*}
J_{52} &\lesssim &\int_{\frac{t}{2}+\frac{1}{4}\big( t+\frac{\left \vert
x\right \vert }{{\rm c}}\big) }^{t}\int_{\left \vert y\right \vert >%
\frac{\left \vert x\right \vert +{\rm c}t}{2}}\left( 1+t-\tau \right)
^{-2}\Big(1+\frac{({\rm c}t-|x|)^2}{1+t}\Big)^{-N/2}\\
&&\cdot\Big(1+\frac{(|x-y|-{\rm c}(t-\tau))^2}{1+t-\tau}\Big)^{-N/2}\left( 1+t\right) ^{-4}dyd\tau \\
&\lesssim &\left( 1+t\right) ^{-4}\Big(1+\frac{({\rm c}t-|x|)^2}{1+t}\Big)^{-N/2}\int_{\frac{t}{2}+\frac{1}{4}\left( t+\frac{%
\left \vert x\right \vert }{{\rm c}}\right) }^{t}\left( 1+t-\tau \right)
^{-2+\frac{5}{2}}d\tau \\
&\lesssim &\left( 1+t\right) ^{-\frac{5}{2}}\Big(1+\frac{({\rm c}t-|x|)^2}{1+t}\Big)^{-N/2}\hbox{.}
\end{eqnarray*}%
Gathering all the estimates, we have
\begin{equation*}
K_7\lesssim \left( 1+t\right) ^{-\frac{3}{2}}\bigg( 1+\frac{\left \vert x\right \vert
^{2}}{1+t}\bigg) ^{-\frac{3}{2}}+\left( 1+t\right) ^{-2}\bigg( 1+\frac{%
\left( {\rm c}t-\left \vert x\right \vert \right) ^{2}}{1+t}\bigg) ^{-1}%
\hbox{,}
\end{equation*}  
and have completed the proof of (\ref{6.5}). \ \ \ \ \ \ \ \ \ \ \ \ \ \ \ \ \ \ \ \ \ \ \ \ \ \ \ \ \ \ \ \ \ \ \ \ \ \ \ \ \ \ \ \ \ \ \ \ \ \ \ \ \ \ \ \ \ \ \ \ \ \ \ \ \ \ \ \ \ \ \ \ \ \ \ \ \textsquare

The following two lemmas are usually employed to derive space-time estimates of the fluid models. In fact, Lemma \ref{A.6} is used to deal with the term with -1-order Riesz operator in low frequency of Green funcion, Lemma \ref{A.7} is used to deal with the terms containing double Riesz operator with the symbol $\frac{\xi\xi^T}{|\xi|^2}$ in low frequency part of Green's function. For readers' convenience, we write down the proof since it was just stated in the previous works without a detailed proof.
\begin{lemma}\label{A.6}\cite{Wu4}
If $t\in\mathbb{R}^+$, $x\in\mathbb{R}^n$ with $n\geq2$ and $|f(x,t)|\lesssim \big(1+\frac{|x|^2}{1+t}\big)^{-r}$ with $r>\frac{n}{2}$, then
\begin{equation}\label{8.5}
\Big|\nabla (-\Delta)^{-1}f(x,t)\Big|\lesssim (1+t)^{\frac{1}{2}}\Big(1+\frac{|x|^2}{1+t}\Big)^{-\frac{n-1}{2}}.
\end{equation}
\end{lemma}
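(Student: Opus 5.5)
We represent $\nabla(-\Delta)^{-1}$ by its kernel in $\mathbb{R}^n$, $\nabla(-\Delta)^{-1}f(x)=c_n\int_{\mathbb{R}^n}\frac{x-y}{|x-y|^{n}}f(y,t)\,dy$. Since $n\ge2$, the kernel is locally integrable, so pointwise we only need to bound
\begin{equation*}
I(x,t):=\int_{\mathbb{R}^n}|x-y|^{-(n-1)}\Big(1+\frac{|y|^2}{1+t}\Big)^{-r}dy .
\end{equation*}
The change of variables $y=\sqrt{1+t}\,z$, $x=\sqrt{1+t}\,w$ gives
\begin{equation*}
I(x,t)=(1+t)^{\frac{n}{2}-\frac{n-1}{2}}\int_{\mathbb{R}^n}|w-z|^{-(n-1)}(1+|z|^2)^{-r}dz=(1+t)^{\frac12}J(w).
\end{equation*}
So the whole statement reduces to the time-independent bound $J(w)\lesssim(1+|w|^2)^{-\frac{n-1}{2}}$, which we then undo with $|w|^2=|x|^2/(1+t)$.

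To prove the bound on $J$, first treat $|w|\le 2$. Split into $|w-z|\le1$, where the integrand is bounded by $|w-z|^{-(n-1)}$ and its integral is finite, and $|w-z|>1$, where the integral is controlled by $\int(1+|z|^2)^{-r}dz<\infty$ because $r>\frac n2$. This gives $J\lesssim1$.

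For $|w|\ge2$, split $\mathbb{R}^n$ into three regions.
\begin{itemize}
\item Region (a), $|z|\le|w|/2$. Here $|w-z|\ge|w|/2$, so this piece is at most $C|w|^{-(n-1)}\int(1+|z|^2)^{-r}dz\lesssim|w|^{-(n-1)}$, again using $r>\frac n2$.
\item Region (b), $|z-w|\le|w|/2$. Here $|z|\ge|w|/2$, so the weight is at most $C|w|^{-2r}$. Since $\int_{|z-w|\le|w|/2}|w-z|^{-(n-1)}dz\lesssim|w|$, this piece is $\lesssim|w|^{1-2r}\le|w|^{-(n-1)}$, because $2r>n$.
\item Region (c), the remainder. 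Here $|z|\ge|w|/2$ and $|w-z|\ge|w|/2$. Bound $|w-z|^{-(n-1)}\lesssim|w|^{-(n-1)}$ and integrate the weight, which gives $\lesssim|w|^{-(n-1)}$ again.
\end{itemize}
Combining the two ranges of $|w|$ yields $J(w)\lesssim(1+|w|)^{-(n-1)}\sim(1+|w|^2)^{-\frac{n-1}{2}}$, which is \eqref{8.5}.

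The only delicate point is the near-singular region (b): we need the integrable singularity $|w-z|^{1-n}$ to cost at most a factor $|w|$, and that is exactly what the assumption $r>\frac n2$ pays for. Everything else is routine. If $f$ is also differentiated, the same scaling argument applies verbatim.
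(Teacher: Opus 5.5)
Your proof is correct, and the paper gives no proof to compare it with. The appendix announces that these Riesz-type lemmas will be proved "for readers' convenience", but this lemma is only stated, with attribution to Wu--Wang's earlier work.

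Your argument is complete on its own:
\begin{itemize}
\item Representing $\nabla(-\Delta)^{-1}f$ as convolution with $c_n x/|x|^n$ is legitimate. The hypothesis $2r>n$ gives $f(\cdot,t)\in L^1\cap L^\infty$, and for $n\ge 2$ the kernel lies in $L^1+L^\infty$, so the integral converges absolutely.
\item The parabolic rescaling $x=\sqrt{1+t}\,w$, $y=\sqrt{1+t}\,z$ isolates the factor $(1+t)^{\frac12}$ exactly.
\item Your splitting then proves the time-independent bound $\int_{\mathbb{R}^n}|w-z|^{1-n}(1+|z|^2)^{-r}\,dz\lesssim(1+|w|)^{1-n}$, and each piece checks out. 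Regions (a) and (c) could be merged into $\{|w-z|\ge|w|/2\}$, where the kernel is at most $C|w|^{1-n}$.
\end{itemize}

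Two of your side comments are slightly off, though neither affects the proof.
\begin{itemize}
\item You place the essential use of $r>\frac n2$ in the wrong region. The strict inequality is needed for $\int(1+|z|^2)^{-r}\,dz<\infty$, which you use in the bounded-$|w|$ case and in regions (a) and (c). Region (b) would survive with $2r\ge n$.
\item The closing remark that the argument applies "verbatim" when $f$ is differentiated holds only if the derivatives are placed on $f$, with the hypothesis imposed on $\partial_x^\alpha f$. Putting them on the kernel produces a non-integrable Calder\'on--Zygmund singularity. That is exactly why the companion lemma on $\nabla\operatorname{div}(-\Delta)^{-1}$ needs an extra hypothesis on $\nabla f$. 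Since this remark is outside the statement, you can simply drop it.
\end{itemize}
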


\begin{lemma}\label{A.7}\cite{Wu4}
Let $t>0$ and $x\in\mathbb{R}^n$ with $n\geq2$. Suppose that $f(x,t)$ satisfies
 \begin{equation}\label{8.10}
 \begin{array}{rl}
&\ \ \ \ \ |f(x,t)|\lesssim \big(1+\frac{|x|^2}{1+t}\big)^{-r_1},\ \ \ \ \ \ \ \ \ r_1>\frac{n}{2},\\
&|\nabla f(x,t)|\lesssim (1+t)^{-\frac{1}{2}}\big(1+\frac{|x|^2}{1+t}\big)^{-r_2},\ \ \ r_2>\frac{n+1}{2}.
\end{array}
\end{equation}
Then it holds that
 \begin{equation}\label{8.11}
\Big|\nabla{\rm div}(-\Delta)^{-1}f(x,t)\Big|\lesssim  \Big(1+\frac{|x|^2}{1+t}\Big)^{-\frac{n}{2}}.
\end{equation}
\end{lemma}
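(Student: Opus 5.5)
Writing $|y|=r$ and $z=|x-y|$, I will reduce the estimate (\ref{8.11}) to an explicit kernel bound. Note first that $\nabla{\rm div}(-\Delta)^{-1}$ is a zeroth-order Calder\'on--Zygmund operator, so it is not bounded pointwise by $|f|$. I will therefore not apply it to $f$ directly. Instead I write $\nabla{\rm div}(-\Delta)^{-1}f=\nabla(-\Delta)^{-1}({\rm div} f)$, i.e. I move one derivative onto $f$. The operator $\nabla(-\Delta)^{-1}$ has the explicit kernel $c_n\,\frac{x-y}{|x-y|^{n}}$, which gives
\begin{equation*}
\big|\nabla{\rm div}(-\Delta)^{-1}f(x,t)\big|\lesssim\int_{\mathbb{R}^n}\frac{|\nabla f(y,t)|}{|x-y|^{n-1}}\,dy .
\end{equation*}
This reduces (\ref{8.11}) to a weighted Riesz-potential bound. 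The price is the near-field singularity and the tail, which I handle with both hypotheses in (\ref{8.10}).

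\textbf{Step 1 (scaling).} Set $y=\sqrt{1+t}\,\eta$ and $x=\sqrt{1+t}\,\zeta$. Then the hypothesis $|\nabla f|\lesssim(1+t)^{-\frac12}(1+|y|^2/(1+t))^{-r_2}$ and the Jacobian $(1+t)^{\frac n2}$ combine with $|x-y|^{1-n}=(1+t)^{\frac{1-n}{2}}|\zeta-\eta|^{1-n}$. The total power of $(1+t)$ is $-\frac12+\frac n2+\frac{1-n}{2}=0$, which explains the absence of time decay in (\ref{8.11}). It therefore suffices to show
\begin{equation*}
\int_{\mathbb{R}^n}|\zeta-\eta|^{1-n}(1+|\eta|^2)^{-r_2}\,d\eta\lesssim (1+|\zeta|^2)^{-\frac n2}.
\end{equation*}

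\textbf{Step 2 (region splitting).} For $|\zeta|\le 2$ the integral is bounded, since $|\zeta-\eta|^{1-n}$ is locally integrable and $2r_2>n+1>1$ controls the tail. For $|\zeta|\ge2$ I split into three regions.
\begin{itemize}
\item On $|\eta|\le|\zeta|/2$, I have $|\zeta-\eta|\gtrsim|\zeta|$, so the contribution is at most $|\zeta|^{1-n}\int(1+|\eta|^2)^{-r_2}d\eta\lesssim|\zeta|^{1-n}$.
\item On $|\eta-\zeta|\le|\zeta|/2$, the weight is $\lesssim|\zeta|^{-2r_2}$, and integrating $|\zeta-\eta|^{1-n}$ over a ball of radius $|\zeta|$ gives $|\zeta|$. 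This region contributes $\lesssim|\zeta|^{1-2r_2}\le|\zeta|^{-n}$, using $r_2>\frac{n+1}{2}$.
\item On the remaining region, both $|\eta|$ and $|\zeta-\eta|$ are $\gtrsim|\zeta|$, and the contribution is $\lesssim|\zeta|^{1-n}\int_{|\eta|\gtrsim|\zeta|}|\eta|^{-2r_2}d\eta\lesssim|\zeta|^{1-2r_2}$.
\end{itemize}

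\textbf{Main obstacle.} The first region only yields $|\zeta|^{1-n}$, one power short of (\ref{8.11}). I plan to recover this power with a cancellation/multipole argument, which is the same mechanism as in Lemma \ref{A.6}, now one order higher. For $|\eta|\le|\zeta|/2$ I return to the form $\int K(x-y)\,{\rm div} f(y)\,dy$ and integrate by parts once more. This gives $\int\nabla K(x-y)\,f(y)\,dy$ with $|\nabla K(x-y)|\lesssim|x-y|^{-n}\sim|\zeta|^{-n}$ after scaling, plus boundary terms on the sphere $|\eta|=|\zeta|/2$. The bulk term is controlled by $|\zeta|^{-n}\int|f|$, which is finite since $r_1>\frac n2$. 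The boundary terms are bounded by $|\zeta|^{n-1}\cdot|\zeta|^{1-n}\cdot|\zeta|^{-2r_1}\lesssim|\zeta|^{-2r_1}\le|\zeta|^{-n}$. So the first hypothesis on $f$ is precisely what removes the loss in the near region.

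The same boundary trick, with the kernel $|x-y|^{-n}$ and the decay of $f$, handles any residual slow term in the third region. In that region the bound $|\zeta|^{1-2r_2}$ from the $\nabla f$ estimate is already $\le|\zeta|^{-n}$. Undoing the scaling, $(1+|\zeta|^2)^{-\frac n2}=(1+|x|^2/(1+t))^{-\frac n2}$, which is (\ref{8.11}).
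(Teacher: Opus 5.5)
The paper does not actually prove this lemma. It is quoted from \cite{Wu4}, and the proof announced just before these appendix lemmas is never written out, so there is no route to compare against and I assess your argument on its own.

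Your argument is correct in substance, but Step~1 must be deleted rather than patched. The reduction it ends with,
$\int_{\mathbb{R}^n}|\zeta-\eta|^{1-n}(1+|\eta|^2)^{-r_2}\,d\eta\lesssim(1+|\zeta|^2)^{-n/2}$, is false. The integrand is nonnegative, so restricting to $|\eta|\le 1$ already bounds the integral below by $c|\zeta|^{1-n}$. Once absolute values are placed on $K(x-y)\,{\rm div}f(y)$, the lost power cannot be recovered.

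What your ``main obstacle'' paragraph actually does is different, and it works:
\begin{itemize}
\item First split the \emph{signed}, absolutely convergent integral $\int K(x-y)\,{\rm div}f(y)\,dy$ into the three regions.
\item On $\{|y|\le|x|/2\}$, where the kernel is smooth, integrate by parts. The bulk term is $\lesssim|x|^{-n}\int|f|\lesssim(|x|^2/(1+t))^{-n/2}$ and the sphere term is $\lesssim(1+|x|^2/(1+t))^{-r_1}$. Both are acceptable because $r_1>\frac n2$.
\item Take absolute values with the $\nabla f$ hypothesis only on the other two regions. There you correctly get $|\zeta|^{1-2r_2}\le|\zeta|^{-n}$ from $r_2>\frac{n+1}{2}$.
\end{itemize}
Do the scaling region by region. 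In the integrated-by-parts piece, the extra derivative on the kernel supplies the factor $(1+t)^{-1/2}$ that the hypothesis on $f$ lacks, so the total power of $(1+t)$ is again zero. Also drop the closing sentence about a ``residual slow term'' in the third region; that region has none.

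Written in this order, your proof has a nice feature compared with the textbook route. That route writes $\partial_i\partial_j(-\Delta)^{-1}f$ as a multiple of $\delta_{ij}f$ plus a principal-value integral against the second-order kernel, and uses the mean value theorem near $x$. Your splitting never needs principal values: near $x$ only the integrable kernel $|x-y|^{1-n}$ appears, and the second-order kernel is used only where $|x-y|\gtrsim|x|$. It also shows exactly where each hypothesis enters: $r_1>\frac n2$ for the piece far from $x$ and its boundary term, and $r_2>\frac{n+1}{2}$ for the piece near $x$ and the tail.
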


\begin{corollary}\label{A.8}\cite{Wu4}
Suppose that $\xi\in\mathbb{R}^n$ with $n\geq2$ and $\hat{f}(\xi,t)=\frac{\xi\xi^T}{|\xi|^2}\chi_1(\xi)e^{-C|\xi|^2t+\mathcal{O}(|\xi|^3)t}$ in the Fourier space, where $\chi_1(\xi)$ is the cutoff function for the low frequency.   Then, it holds that
\begin{equation}\label{8.18}
|\mathcal{F}^{-1}(\xi^\alpha\hat{f}(\xi,t))|\lesssim (1+t)^{-\frac{n+|\alpha|}{2}}\big(1+\frac{|x|^2}{1+t}\big)^{-\frac{n+|\alpha|}{2}}.
\end{equation}
\end{corollary}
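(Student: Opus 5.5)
The last statement is Corollary \ref{A.8}: for $\hat f(\xi,t)=\frac{\xi\xi^T}{|\xi|^2}\chi_1(\xi)e^{-C|\xi|^2t+\mathcal O(|\xi|^3)t}$ we must show $|\mathcal F^{-1}(\xi^\alpha\hat f)|\lesssim (1+t)^{-\frac{n+|\alpha|}{2}}\big(1+\frac{|x|^2}{1+t}\big)^{-\frac{n+|\alpha|}{2}}$. The plan is to factor the Riesz symbol off the smooth heat-type factor. Writing $\hat g(\xi,t)=\chi_1(\xi)e^{-C|\xi|^2t+\mathcal O(|\xi|^3)t}$, we have
$$\mathcal F^{-1}(\xi^\alpha\hat f)=-\nabla{\rm div}(-\Delta)^{-1}\,\partial_x^\alpha g$$
up to harmless constants and factors of $\mathrm i$. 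We then apply Lemma \ref{A.7} to $f:=\partial_x^\alpha g$, after first rescaling out its temporal decay.

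\textbf{Step 1.} By Lemma \ref{A.1}, since $\hat g$ and all its $\xi$-derivatives satisfy the symbol bounds of that lemma with $a=0$, we get for every $\beta$ and every $N$
$$|\partial_x^\beta g(x,t)|\lesssim (1+t)^{-\frac{n+|\beta|}{2}}\Big(1+\frac{|x|^2}{1+t}\Big)^{-N}.$$
This is a routine check: the factor $\xi^\beta\hat g$ obeys $|D_\xi^\gamma(\xi^\beta \hat g)|\lesssim (|\xi|^{(|\beta|-|\gamma|)_+}+|\xi|^{|\beta|}t^{|\gamma|/2})e^{-b|\xi|^2t}$ on the support of $\chi_1$. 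The $\mathcal O(|\xi|^3)t$ correction is absorbed into $e^{-b|\xi|^2t}$ for $|\xi|$ small, and differentiating it produces powers of $t|\xi|^2$ and $t|\xi|$, which are controlled by the factor $(1+t|\xi|^2)^a$ and by $t^{|\gamma|/2}$.

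\textbf{Step 2.} Set $h:=(1+t)^{\frac{n+|\alpha|}{2}}\partial_x^\alpha g$. Step 1 with $\beta=\alpha$ and $\beta=\alpha+e_j$ gives
$$|h|\lesssim \Big(1+\frac{|x|^2}{1+t}\Big)^{-r_1},\qquad |\nabla h|\lesssim (1+t)^{-\frac12}\Big(1+\frac{|x|^2}{1+t}\Big)^{-r_2},$$
with $r_1,r_2$ as large as we like. Lemma \ref{A.7} therefore applies and yields $|\nabla{\rm div}(-\Delta)^{-1}h|\lesssim (1+\frac{|x|^2}{1+t})^{-\frac n2}$. Undoing the rescaling gives the bound with spatial exponent $\frac n2$, which already proves the case $\alpha=0$.

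\textbf{Step 3.} To upgrade the spatial exponent to $\frac{n+|\alpha|}{2}$ when $|\alpha|\ge1$, we cannot use the same argument, because Lemma \ref{A.7} alone loses the extra decay. Instead we work with the kernel representation directly. Write $\nabla{\rm div}(-\Delta)^{-1}=c\,\delta+{\rm p.v.}\,K$ with $K$ homogeneous of degree $-n$. Split the convolution $K\ast\partial^\alpha g$ into the region $|y|\le |x|/2$ and its complement.
\begin{itemize}
\item On $|y|\ge |x|/2$ the rapid decay of $\partial^\alpha g$ gives an arbitrarily good bound.
\item On $|y|\le |x|/2$, integrate by parts $|\alpha|$ times to move the derivatives onto $K(x-y)$. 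Each derivative produces a factor $|x|^{-1}$, and the moments of $g$ are bounded by $(1+t)^{|\gamma|/2}$. Together with the boundary terms on $|y|=|x|/2$, which again decay rapidly, this gives $\lesssim |x|^{-n-|\alpha|}$.
\end{itemize}
For $|x|\ge\sqrt{1+t}$ this is exactly $(1+t)^{-\frac{n+|\alpha|}{2}}\big(\frac{|x|^2}{1+t}\big)^{-\frac{n+|\alpha|}{2}}$. For $|x|\le\sqrt{1+t}$, Step 2 already supplies the bound.

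The main obstacle is Step 3: handling the principal-value singularity of $K$ near $y=x$ together with the integration-by-parts boundary terms, while keeping the bounds uniform in $t$. If the direct argument becomes messy, a fallback is to use the scaling $x\mapsto x/\sqrt{1+t}$, which reduces the claim to the single time $t=0$ for a Schwartz-type family. For that family the decay $|x|^{-n-|\alpha|}$ follows from the fact that $\xi^\alpha\frac{\xi\xi^T}{|\xi|^2}$ is homogeneous of degree $|\alpha|$ and smooth away from the origin.
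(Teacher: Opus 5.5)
Your argument is correct and is more complete than what the paper provides. The paper gives no proof of Corollary~\ref{A.8}; it quotes it from \cite{Wu4} and places it directly after Lemma~\ref{A.7}, as a consequence of that lemma.

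Read literally, that intended derivation is your Step~2. As you observe, it cannot give the statement for $|\alpha|\ge 1$. The conclusion of Lemma~\ref{A.7} is the fixed profile $\big(1+\frac{|x|^2}{1+t}\big)^{-\frac n2}$. Applied to $(1+t)^{\frac{n+|\alpha|}{2}}\partial_x^\alpha g$, it recovers the temporal rate $(1+t)^{-\frac{n+|\alpha|}{2}}$ but not the spatial exponent $\frac{n+|\alpha|}{2}$. So the paper's route, taken at face value, establishes only the case $\alpha=0$.

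Your Step~3 supplies the missing idea. On $\{|y|\le |x|/2\}$ you move the derivatives of $\partial_x^\alpha g$ onto the smooth kernel $K(x-\cdot)$, which produces the extra factor $|x|^{-|\alpha|}$ uniformly in $t$. This uses only $\|g\|_{L^1}\lesssim 1$, so your remark about moments of $g$ is unnecessary.

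The step you flag as the main obstacle is routine:
\begin{itemize}
\item Restrict to $|x|\ge 2\sqrt{1+t}$; Step~2 covers the remaining range. Then the ball $\{|x-y|<\sqrt{1+t}\}$ lies inside $\{|y|\ge |x|/2\}$.
\item On that ball, use that $K$ has mean zero on spheres, together with the bound on $\nabla\partial_x^\alpha g$ from Step~1.
\item Off the ball, use $|K(x-y)|\lesssim (1+t)^{-\frac n2}$ together with $\int_{|y|\ge |x|/2}|\partial_x^\alpha g|\,dy\lesssim (1+t)^{-\frac{|\alpha|}{2}}\big(1+\frac{|x|^2}{1+t}\big)^{-N+\frac n2}$.
\end{itemize}

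I would drop the fallback scaling argument. The cutoff $\chi_1$ and the $\mathcal O(|\xi|^3)t$ term destroy exact self-similarity, so the rescaled symbols are a $t$-dependent family. The uniformity over that family which the fallback would need is exactly what Step~3 proves.

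Finally, state explicitly that the $\mathcal O(|\xi|^3)$ correction is a smooth function of $\xi$ vanishing to third order at $\xi=0$. Step~1 uses this tacitly when it invokes Lemma~\ref{A.1} with arbitrary $N$.
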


\section*{Acknowledgments}
Z.G. Wu was supported by Guangxi Natural Science Foundation $\#$2025JJA110146. W.K. Wang was supported by 
National Natural Science Foundation of China $\#$12271357 and Shanghai Science and Technology 
Innovation Action Plan $\#$21JC1403600. Y.H. Zhang was supported
by Guangxi Natural Science Foundation $\#$2025JJF110001, $\#$2024GXNSFDA010071, National Natural Science Foundation of China $\#$12271114, Center for Applied Mathematics of Guangxi (Guangxi Normal University), and the Key Laboratory of Mathematical Model and Application (Guangxi
Normal University), Education Department of Guangxi Zhuang Autonomous Region.

\end{document}